\newtheorem{thm}{Theorem}[section]%
\newtheorem{prop}[thm]{Proposition}%
\newtheorem{lem}[thm]{Lemma}
\newtheorem{cor}[thm]{Corollary}
\newtheorem{thmintro}{Theorem}
\newtheorem{propintro}[thmintro]{Proposition}
\theoremstyle{definition}%
\newtheorem{example}[thm]{Example}%
\newtheorem{rem}[thm]{Remark}%
\newtheorem{question}[thmintro]{Question}%
\newcommand{\ats}{\mathscr{U}}
\newcommand{\catname}[1]{\mbox{\normalfont{#1}}}
\crefname{thm}{theorem}{theorems}
\Crefname{thm}{Theorem}{Theorems}
\crefname{prop}{proposition}{propositions}
\Crefname{prop}{Proposition}{Propositions}
\crefname{lem}{lemma}{lemmas}
\Crefname{lem}{Lemma}{Lemmas}
\DeclareMathOperator{\op}{op}
\DeclareMathOperator{\id}{Id}
\DeclareMathOperator{\ima}{im}
\DeclareMathOperator{\dom}{dom}
\DeclareMathOperator{\Ker}{Ker}
\DeclareMathOperator{\pt}{pt}
\begin{document}

\title{A generalisation of the Munn semigroup}

\author{Francesco Tesolin}

\address{F. Tesolin, Department of Mathematics and the Maxwell Institute for Mathematical Sciences, 
          Heriot-Watt University,
          Riccarton, Edinburgh, EH14 4AS, UK}

	  \email{ft2021@hw.ac.uk}

	\maketitle

\begin{abstract}
	To each meet-semilattice $E$ is associated an inverse semigroup $T_{E}$ called the Munn semigroup of $E$. We
generalise this construction by replacing the meet-semilattice $E$ by a presheaf of sets $X$ over a meet-semilattice.
The inverse semigroup
$T_{X} $ that results is called the generalised Munn semigroup. Our construction can be viewed as a generalisation of one
due to Zhitomirskiy as well as a restriction of one due to Reilly. We prove that idempotent-separating representations in to the
generalised Munn semigroup characterise étale actions of inverse semigroups.
\end{abstract}

\small

\noindent \emph{2020 Mathematics subject classification.} 20M18, 20M30, 20M50

\noindent \emph{Key words.} Inverse semigroups, Étale action, Presheaf, Meet-semilattice.

\section{Introduction}

Actions of inverse semigroups can be seen as the usual semigroup action, however étale actions, or 
\emph{supported actions} as we call them in this paper,
are more significant in inverse semigroup theory. As noted in \cite{LawKud2014}, since the idempotents
of an inverse semigroup can be regarded as partial identities, we want to consider an action on a collection of sets
where each idempotent fixes one of these sets. A supported action is then viewed as an action on a presheaf over the idempotents.
Supported actions were first introduced by Steinberg \cite{steinberg2009strong} to study a Morita theory for inverse semigroups.
This work formed part of a wider program of studying C*-algebras arising from inverse semigroups, a subject which has grown
significantly in recent years.

To study supported actions we define a notion of partial transformations on a presheaf.
We are not the first: Renault defined an inverse semigroup of partial isomorphisms of a
sheaf \cite{RenaultBook}, Reilly defined an inverse semigroup of order-isomorphisms between principal order-ideals
of a pre-semilattice \cite{reilly1977enlarging}, and Zhitomirskiy defined an inverse semigroup of
principal partial automorphisms of a (topological) bundle \cite{zhito1}.
The two main questions we wish to answer in this paper are:
\begin{question}
How do these three structures relate to one another?
\end{question}

\begin{question}
For an inverse semigroup $S$, 
how do the supported actions of $S$ relate to representations of $S$ by partial transformations on a presheaf?
\end{question}

Our generalised Munn semigroup can be seen as a generalisation of Renault's inverse semigroup. We generalise the Munn
representation theorem \cite[Lemma 3.1]{MunnSemigroup}
to answer Question~B.

\begin{thmintro}[Generalised Munn representation]
	Let $(X,S,p)$ be a supported action and let $Y=(X,E(S),p)$ be the presheaf obtained by restricting the action $(X,S,p)$ 
	to the idempotents.
	Then, there is an idempotent-separating 
	representation $\xi\colon S \to T_{Y}$
	 whose image is a wide inverse subsemigroup of $T_{Y}$.
\end{thmintro}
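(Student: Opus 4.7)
The plan is to follow the classical Munn construction, replacing the conjugation action of $S$ on its semilattice $E(S)$ with the given supported action of $S$ on the presheaf $X$. Write $\mathbf{d}(s) = s^{*}s$ and $\mathbf{r}(s) = ss^{*}$, let $E_{e}$ denote the principal order ideal of $E(S)$ below $e$, and write $Y|_{e}$ for the subpresheaf of $Y$ obtained by restricting to $E_{e}$. For each $s \in S$ and each $e \leq \mathbf{d}(s)$ the conjugation $e \mapsto ses^{*}$ is an order isomorphism $E_{\mathbf{d}(s)} \to E_{\mathbf{r}(s)}$, and by the axioms of a supported action the assignment $x \mapsto s \cdot x$ is a bijection from the fibre of $p$ over $e$ to the fibre over $ses^{*}$. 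Gluing these fibrewise bijections produces a candidate $\xi(s) \colon Y|_{\mathbf{d}(s)} \to Y|_{\mathbf{r}(s)}$.

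The first check is that $\xi(s)$ genuinely is a morphism of presheaves, i.e.\ that it intertwines the restriction maps of $Y$. This is exactly the compatibility axiom built into the notion of a supported action, so it amounts to unpacking definitions and yields $\xi(s) \in T_{Y}$. Next I would verify $\xi(st) = \xi(s)\xi(t)$. The composition rule in $T_{Y}$ requires computing the largest subpresheaf of $Y|_{\mathbf{d}(t)}$ whose image under $\xi(t)$ lies in $Y|_{\mathbf{d}(s)}$; translating this back via the conjugation formula should give the principal subpresheaf below $\mathbf{d}(st) = \mathbf{d}(t) \wedge t^{*}\mathbf{d}(s)t$, matching the natural domain of $\xi(st)$. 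The action axiom $s \cdot (t \cdot x) = (st) \cdot x$ then identifies the two maps on this common domain.

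For the idempotent-separating property, $\xi(e)$ is the identity on $Y|_{e}$ for $e \in E(S)$, because an idempotent acts as the identity on the portion of $X$ it supports. If $\xi(e) = \xi(f)$ then the two maps share a domain, so $E_{e} = E_{f}$, and thus $e = f$. For wideness, the idempotents of $T_{Y}$ are by construction the identities on subpresheaves of the form $Y|_{e}$, and each such identity equals $\xi(e)$; hence $E(T_{Y}) = \xi(E(S)) \subseteq \xi(S)$, which is exactly the statement that $\xi(S)$ is wide in $T_{Y}$.

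The main obstacle I expect is the domain calculation in the homomorphism step: the composite $\xi(s)\xi(t)$ in $T_{Y}$ lives naturally on a pulled-back subpresheaf, and matching this with the principal subpresheaf beneath $\mathbf{d}(st)$ requires the familiar Munn-style bookkeeping at the level of $E(S)$, together with a verification that the presheaf restriction maps transport correctly along the conjugations $e \mapsto tet^{*}$ and $e \mapsto ses^{*}$. Once this identification is in place, the remaining steps are direct consequences of the supported-action axioms.
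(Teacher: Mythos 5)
Your proposal follows essentially the same route as the paper's proof: define $\xi(s)$ by letting $s$ act between the principal subpresheaves below $\mathbf{d}(s)$ and $\mathbf{r}(s)$, pair this with the classical Munn conjugation on $E(S)$ as the semilattice component, establish the homomorphism property via the Munn-style domain identity $\mathbf{d}(st)=t^{-1}\mathbf{d}(s)t$, and read off the idempotent-separating and wideness claims from $\xi(e)=(\id_{X\cdot e},\id_{e^{\downarrow}})$. The one point needing care when you write this out is that the paper's supported actions are \emph{right} actions with $p(x\cdot s)=s^{-1}p(x)s$, so to obtain a homomorphism rather than an anti-homomorphism you must take $\xi(s)=\alpha_{s^{-1}}$ where $\alpha_{s}(y)=y\cdot s$ --- which is what your fibre formula $e\mapsto ses^{*}$ implicitly encodes.
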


The previous theorem shows that our construction is sufficient for the work
of Reilly, recapturing their embedding theorem \cite[Theorem 3.1]{reilly1977enlarging}.

Theorem C proved that for every supported action we have a representation, the next theorem proves the converse: that 
idempotent-separating homomorphisms into $T_{X}$ give rise to supported actions.

\begin{thmintro} 
	Let $S$ be an inverse semigroup and $(X,E(S),p)$ a presheaf. Then, the following are equivalent:
	\begin{enumerate}
	\item There exists an idempotent-separating homomorphism $\phi\colon S \to T_{X}$ whose image is a 
	wide inverse subsemigroup of $T_{X}$.
	\item 
	There exists a supported action $(X,S,\overline{p})$ such that the presheaf obtained by restricting 
	to the idempotents is isomorphic to $(X,E(S),p)$.
	\end{enumerate}
\end{thmintro}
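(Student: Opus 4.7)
The direction $(2)\Rightarrow(1)$ is precisely Theorem~C, so the substance of the theorem lies in the converse. Given an idempotent-separating homomorphism $\phi\colon S \to T_X$ with wide image, the plan is to declare the partial action by $s\cdot x := \phi(s)(x)$, interpreted as defined whenever $x$ lies in the domain of the partial transformation $\phi(s)$, and then to show that this assembles into a supported action whose restriction to $E(S)$ recovers $(X,E(S),p)$.

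First I would check that $\phi|_{E(S)}\colon E(S)\to E(T_X)$ is a meet-semilattice isomorphism: injectivity is the idempotent-separating hypothesis, while for surjectivity, any $\epsilon \in E(T_X)$ lies in $\phi(S)$ by the wide hypothesis, say $\phi(s)=\epsilon$, so that $\phi(ss^{-1})=\epsilon\epsilon^{-1}=\epsilon$ exhibits $\epsilon$ as the image of an idempotent of $S$. Under this identification, each partial identity $\phi(e)$ singles out the sub-presheaf of $X$ corresponding to $e\in E(S)$. Next I would verify the axioms of a supported action: associativity $(st)\cdot x = s\cdot(t\cdot x)$ follows from $\phi(st)=\phi(s)\phi(t)$ together with the fact that multiplication in $T_X$ is composition of partial transformations, and compatibility of the action with $p$ is automatic since each $\phi(s)$ has its source and range among the sub-presheaves indexed by $s^{-1}s$ and $ss^{-1}$. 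Finally, restricting the constructed supported action back to $E(S)$ recovers $(X,E(S),p)$, because $\phi(e)$ acts as the identity on the sub-presheaf attached to $e$ and as undefined elsewhere, which is precisely the restriction datum of the original presheaf.

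The main obstacle will be the bookkeeping: verifying that the domain of $\phi(s)$, viewed as a partial transformation on $X$, is precisely $\{x\in X : p(x)\le s^{-1}s\}$, that its image coincides with $\{x : p(x)\le ss^{-1}\}$ under the identification of idempotents, and that these domains compose correctly under products in $S$ so that the partial-action formula $s\cdot x = \phi(s)(x)$ is total on the expected subset of $S\times X$. Once these compatibilities are in place the remaining verifications are routine, and one sees that the constructions of Theorem~C and of the converse are mutually inverse up to the natural notion of equivalence of supported actions, thereby establishing the equivalence of $(1)$ and $(2)$.
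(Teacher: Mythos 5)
The direction $(2)\Rightarrow(1)$ is indeed Theorem~C, and your identification of $\phi|_{E(S)}\colon E(S)\to E(T_X)$ as an isomorphism (injective because $\phi$ is idempotent-separating, surjective because the image is wide) matches the paper. However, there is a genuine gap in your construction of the converse: you define $s\cdot x=\phi(s)(x)$ only when $x$ lies in the domain of the partial transformation $\phi(s)$, i.e.\ you build a \emph{partial} action. A supported action in the sense of this paper is a \emph{total} map $X\times S\to X$ satisfying (SA1)--(SA3); in particular (SA3) demands $p(x\cdot s)=s^{-1}p(x)s$ for \emph{every} pair $(x,s)$, so $x\cdot s$ must be defined even when $p(x)\not\le \mathbf{r}(s)$ (under the identification of idempotents). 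Your partial-action formula simply does not produce an object of the right type, and the same problem infects your final claim that $\phi(e)$ acts ``as the identity on the sub-presheaf attached to $e$ and as undefined elsewhere'': the restriction maps of the given presheaf $(X,E(S),p)$ are total on each fibre ($x\cdot e=\phi^{p(x)}_{ep(x)}(x)$ is defined for all $x$), so an action that is undefined off $X\cdot e$ cannot restrict to the original presheaf.

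The missing idea, which is the crux of the paper's proof, is to precompose with the presheaf action of the relevant idempotent before applying the partial map: writing $\phi(s)=(\alpha_s,\theta_s)\colon (X\cdot e,e^{\downarrow})\to (X\cdot f,f^{\downarrow})$, the paper sets $x\circ s=\alpha_s^{-1}(x\cdot f)$. The restriction $x\mapsto x\cdot f$ pushes an arbitrary $x$ into $X\cdot f$, where $\alpha_s^{-1}$ is defined, making the action total; the inverse is there because one is producing a right action from a homomorphism (mirroring $\xi(s)=(\alpha_{s^{-1}},\theta_{s^{-1}})$ in Theorem~C). With this formula the verifications are no longer routine bookkeeping about domains: (SA1) requires computing $\ima(\alpha_s\alpha_t)=X\cdot\theta_s(eh)$ to match domains, and (SA3) requires showing $\phi(s^{-1}\overline{p}(x)s)=\phi(\overline{p}(x\circ s))$ and then invoking injectivity of $\phi|_{E(S)}$ rather than being ``automatic''. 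If you replace your partial-action formula by the restricted one, the rest of your outline can be carried through along the paper's lines.
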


This answers Question $B$. With the following results we answer Question $A$.
We link our work with that of Zhitomirskiy \cite{zhito1}, who for a surjective continuous map $\pi\colon X \to B$
defined the inverse semigroup $\mathbf{La}(\pi)$ of partial homeomorphisms on $X$, respecting the structure of $\pi$.
This can be seen
as a topological version of our work.
Using the Munn representation theorem, we prove the following proposition showing how
for a (topological) space $X$ 
the Munn semigroup on the open sets $\Omega(X)$
is connected to the inverse semigroup of partial homeomorphisms on $X$, denoted $\mathcal{I}(X, \tau)$.

\begin{propintro}
	Let $X$ be a sober space. Then, $T_{\Omega(X)} \cong \mathcal{I}(X, \tau)$.
\end{propintro}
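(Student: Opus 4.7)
The plan is to build an explicit isomorphism $\Phi\colon \mathcal{I}(X,\tau) \to T_{\Omega(X)}$ by direct image: for $f\colon U\to V$ in $\mathcal{I}(X,\tau)$, set $\Phi(f)\colon \Omega(U)\to\Omega(V)$, $W\mapsto f(W)$. The principal ideal of $\Omega(X)$ generated by $U$ is exactly $\Omega(U)$, and $\Phi(f)$ is an order-isomorphism between such principal ideals, hence an element of $T_{\Omega(X)}$. That $\Phi$ respects composition and inversion of partial homeomorphisms is a routine consequence of the identities $(g\circ f)(W)=g(f(W))$ and $f^{-1}(f(W))=W\cap \dom(f)$, so $\Phi$ is a well-defined semigroup homomorphism.

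For injectivity, suppose $\Phi(f)=\Phi(g)$. Comparing the principal ideals that serve as domains and codomains forces $\dom(f)=\dom(g)=U$ and $\ima(f)=\ima(g)=V$. For any $x\in U$, the open subsets of $V$ containing $f(x)$ are exactly $\{f(W):x\in W\in\Omega(U)\}$, and similarly for $g(x)$; since $\Phi(f)=\Phi(g)$ these neighbourhood systems coincide, and sobriety (which entails $T_0$) yields $f(x)=g(x)$.

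For surjectivity, let $\phi\colon \Omega(U)\to\Omega(V)$ lie in $T_{\Omega(X)}$. As an order-isomorphism between frames, $\phi$ preserves arbitrary joins and meets, so it carries completely prime filters to completely prime filters. Open subspaces of sober spaces are sober, so both $U$ and $V$ are sober, and their points are in bijection with the completely prime filters on $\Omega(U)$ and $\Omega(V)$ via $x\mapsto \{W:x\in W\}$. Define $f\colon U\to V$ by sending $x$ to the unique $y\in V$ with $\phi(\{W:x\in W\})=\{W':y\in W'\}$; using $\phi^{-1}$ in the same way produces a two-sided inverse, and one checks directly that $f(W)=\phi(W)$ for every $W\in\Omega(U)$, so $f$ is a homeomorphism with $\Phi(f)=\phi$.

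The main obstacle is this last step: it is the point-set content of the duality between sober spaces and spatial frames, and is precisely where sobriety earns its keep, since without it a lattice isomorphism between $\Omega(U)$ and $\Omega(V)$ need not descend to any bijection between the underlying points.
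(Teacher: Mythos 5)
Your proof is correct, and the map you construct is (up to taking inverses) the same one the paper uses; the difference lies in how the two halves are justified. For injectivity the paper invokes the Munn representation theorem, so that the kernel of the embedding is the maximum idempotent-separating congruence $\mu$, and then proves separately that $\mathcal{I}(X,\tau)$ is fundamental for sober $X$ (via a lemma stating that homeomorphisms agreeing on all open-set images are equal, itself proved through the unit of the $\Omega\dashv\pt$ adjunction). You instead check directly that equal direct-image maps force equal neighbourhood filters of $f(x)$ and $g(x)$ and conclude from $T_0$; this is more elementary and isolates exactly what is needed (only $T_0$, not full sobriety, enters the injectivity half — consistent with the paper's own example of a non-sober space with fundamental $\mathcal{I}(X,\tau)$). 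For surjectivity both arguments are the same in substance: an order-isomorphism of open-set lattices carries completely prime filters to completely prime filters, and sobriety of open subspaces converts this into a homeomorphism of point sets; the paper packages this as functoriality of $\pt$ plus naturality of $\eta$, while you do it by hand. What the paper's route buys is reuse of already-established machinery (the classical Munn representation and the frame--space adjunction, which it needs again for the bundle theorem); what your route buys is a self-contained point-set proof with slightly weaker hypotheses in the injectivity step.
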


Using Proposition E and the generalised Munn representation we define an isomorphism between
$\mathbf{La}(\pi)$ and $T_{\Gamma(\pi)}$, where $\Gamma(\pi)$ is the sheaf of sections  of $\pi$, and
$\pi $ is an étale bundle over a sober space.

\begin{thmintro}
	Let $\pi\colon X \to B$ be an étale bundle such that $B$ is a sober space. Then,
	\[
		T_{\Gamma(\pi)} \cong \mathbf{La}(\pi).
	\]
\end{thmintro}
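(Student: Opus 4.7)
The plan is to assemble the isomorphism from the canonical supported action of $\mathbf{La}(\pi)$ on $\Gamma(\pi)$ and then apply the generalised Munn representation (Theorem~C), with Proposition~E handling the base level.

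First, I would view $\Gamma(\pi)$ as a presheaf of sets over $\Omega(B)$: $\Gamma(\pi)(V)$ is the set of local sections over $V \in \Omega(B)$, with restriction along $V' \subseteq V$ the usual restriction of sections. Étaleness of $\pi$ ensures that every $\alpha \in \mathbf{La}(\pi)$ with domain $U \subseteq X$ descends through $\pi$ to a partial homeomorphism $\bar\alpha \in \mathcal{I}(B,\tau)$ with domain $\pi(U)$, and one defines an action of $\alpha$ on a section $s$ supported on a subset of $\pi(U)$ by $\alpha \cdot s := \alpha \circ s \circ \bar\alpha^{-1}$. Checking this gives a supported action $(\Gamma(\pi), \mathbf{La}(\pi), p)$, with $p$ the support map sending $s \in \Gamma(\pi)(V)$ to $V$, is a routine verification from the definition of $\mathbf{La}(\pi)$ as bundle automorphisms. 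The idempotents of $\mathbf{La}(\pi)$ are the identities on open sections of $\pi$, and these match elements of $\Gamma(\pi)$.

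Applying Theorem~C to this supported action yields an idempotent-separating representation
\[
\xi\colon \mathbf{La}(\pi) \longrightarrow T_{\Gamma(\pi)}
\]
with wide image. Injectivity follows from étaleness: two elements of $\mathbf{La}(\pi)$ acting identically on every section must coincide pointwise on $X$, since the images of local sections cover $X$. For surjectivity, given $t \in T_{\Gamma(\pi)}$, the action of $t$ on principal ideals of $\Gamma(\pi)$ prescribes a compatible family of stalkwise bijections. Proposition~E, applied to the sober base $B$, lets me translate the base-level datum of $t$ into an honest partial homeomorphism $\bar t \in \mathcal{I}(B,\tau)$. Étaleness then allows assembling the section-level data together with $\bar t$ into a partial homeomorphism $\alpha_t$ of $X$ lying in $\mathbf{La}(\pi)$, and $\xi(\alpha_t)=t$ is direct.

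The main obstacle will be this last surjectivity step: reconstructing a genuine partial homeomorphism of $X$ from the abstract presheaf-level morphism $t$. This is precisely where the two hypotheses of the theorem bite. Étaleness is essential so that $X$ is determined by its sections and stalkwise bijections promote to continuous maps; sobriety of $B$ is indispensable because it is only through Proposition~E that the base component of $t$ can be recognised as a bona fide partial homeomorphism of $B$. Everything else reduces to unpacking the supported-action axioms and comparing restrictions along the presheaf structure.
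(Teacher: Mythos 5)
Your overall strategy coincides with the paper's: build the supported action of $\mathbf{La}(\pi)$ on $\Gamma(\pi)$, apply the generalised Munn representation to obtain $\xi\colon \mathbf{La}(\pi)\to T_{\Gamma(\pi)}$, and then prove bijectivity using sobriety of $B$ and étaleness of $\pi$. However, two steps have genuine gaps.

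The injectivity argument is incorrect as stated. You attribute injectivity to étaleness alone: two elements acting identically on every section ``must coincide pointwise on $X$, since the images of local sections cover $X$''. But equality of the actions only determines the composites $\theta^{-1}\sigma\hat{\theta}$; to extract $\theta$ pointwise you must first know $\hat{\theta}$, and comparing the domains of the transported sections only tells you that $\hat{\theta}$ and $\hat{\phi}$ induce the same inverse-image map on $\Omega(V)$. Recovering a map of spaces from its action on opens is exactly where sobriety enters. Concretely, take $B=X=\{a,b\}$ with the indiscrete topology and $\pi=\id_{B}$ (a perfectly good étale bundle over a non-sober base): the swap and the identity are distinct elements of $\mathbf{La}(\pi)$, the images of sections cover $X$, yet both act as the identity on the unique nonempty section $\id_{B}$. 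So injectivity genuinely requires sobriety; the paper obtains it by showing the characteristic congruence is contained in $\mu$ and then invoking the lemma that homeomorphisms of sober spaces agreeing on the images of all open sets are equal, first for the base maps $\hat{\theta}$ and then, via sections, for $\theta$ itself.

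The surjectivity step, which you rightly flag as the crux, is not actually carried out: ``assembling the section-level data into a partial homeomorphism $\alpha_t$'' is the entire content of the proof, not a consequence of the hypotheses. Given $(\alpha,\theta_{\alpha})\in T_{\Gamma(\pi)}$ one must: use sobriety to produce $\theta_{\alpha}^{\ast}\colon U\to V$ from the frame map $\theta_{\alpha}$; define $\alpha^{\ast}(x)$, for $x\in\pi^{-1}(U)$, as the value of the section $\alpha^{-1}(\sigma)$ at $\theta_{\alpha}^{\ast}(\pi(x))$ for some local section $\sigma$ through $x$; verify independence of the choice of $\sigma$ (this uses that $\alpha$ commutes with restriction, i.e.\ that it is a presheaf isomorphism, not merely a bijection); verify injectivity, surjectivity, and continuity and openness of $\alpha^{\ast}$ via the basis of images of sections; and finally check that $\xi$ sends the resulting element of $\mathbf{La}(\pi)$ back to $(\alpha,\theta_{\alpha})$. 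None of this reduces to ``unpacking the supported-action axioms'', and without it the proof is a plan rather than an argument.
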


\noindent
\textbf{Organisation of the paper.}
In Section 3 we define supported actions, prove some of their first properties and give many examples including:
as group actions, characterising ideals and normal
subsemigroups, in semigroup cohomology, and in actions of Clifford semigroups.
In Section 4 we give an explicit proof that presheaves can be viewed as supported actions. We define subpresheaves and the 
inverse semigroup of isomorphisms between subpresheaves.
In Section 5 we define the generalised Munn semigroup, show it really is a generalisation, prove Theorem C and Theorem D, and
look at some requirements for the generalised Munn representation to be faithful.
In Section 6 we introduce frames and spaces and use properties of sober spaces \cite[Chap. 9]{sheaves} 
to prove Proposition $E$. We then use this result to prove Theorem F.
In Section 7 we show how bundles can also arise naturally from presheaves.

\section{Preliminaries}
See \cite{LawsonBook} for background on inverse semigroups.
In an inverse semigroup $S$ define, in the usual way, $\mathbf{d}(s) = s^{-1}s$ and $\mathbf{r}(s) = ss^{-1}$.
Denote the set of idempotents of $S$ by $E(S)$.
We call an abelian inverse semigroup with all elements being idempotent a
\emph{meet-semilattice}. 
We denote the set of elements of $S$ which commute with all elements of $E(S)$ by $Z(E(S))$.
An inverse semigroup $S$ such that $S = Z(E(S))$ is called \emph{Clifford}.
All inverse semigroups have a natural partial order which will be used throughout.
An inverse subsemigroup of $S$ containing all idempotents of $S$ is said to be \emph{wide}.
A subsemigroup $M$ is called \emph{self-conjugate} if $s^{-1}Ms \subseteq M$ for all $s \in S$.
Wide self-conjugate inverse subsemigroups are called \emph{normal}.
Define the \emph{Kernel} of the congruence $\rho$ of an inverse semigroup, denoted $\Ker \rho$,
to be the union of the $\rho$-classes containing idempotents. As usual, the congruence induced on $S$ by
the homomorphism $\theta\colon S \to T$ is called the kernel 
of $\theta$ and is denoted by $\ker \theta$.
We say that a congruence $\rho$ on $S$ is \emph{idempotent-separating} when each congruence class contains at most
one idempotent.
The inverse semigroup of all partial bijections on a set $X$ is called the \emph{symmetric inverse monoid} and is denoted by
$\mathcal{I}(X)$. A homomorphism from an inverse semigroup $S$ to a symmetric inverse monoid is called a \emph{representation}
of $S$, if the homomorphism is injective the representation is said to be faithful.
Define the relation $\mu$, on an inverse semigroup $S$, by
    \[
(a,b) \in \mu \iff  \ a^{-1}ea = b^{-1}eb \ \forall e \in E(S).
    \]

\begin{prop}[{\cite[Ch. 5.2 Prop. 3-4]{LawsonBook}}]\label[proposition]{MaxCongruence}
    Let $S$ be an inverse semigroup. Then:
    \begin{enumerate}[1.]
    	\item  
    $\mu$ is the maximum idempotent-separating congruence on $S$.
    \item $\mu$ is the unique idempotent-separating congruence such that $\Ker \mu = Z(E(S))$.
    \end{enumerate}
     \end{prop}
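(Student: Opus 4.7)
The plan is to verify each of the two claims in turn.

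For part (1), I would first check that $\mu$ is a congruence. Reflexivity, symmetry, and transitivity are immediate from the defining equation. Compatibility with multiplication reduces, via the identity $(ac)^{-1}e(ac) = c^{-1}(a^{-1}ea)c$, to the observation that $a^{-1}ea$ is itself an idempotent, which permits two successive applications of the defining condition. To see that $\mu$ is idempotent-separating, suppose $e,f \in E(S)$ with $(e,f) \in \mu$; evaluating the defining equation at both $e$ and $f$ gives $e = fef$ and $f = efe$, whence the commutativity of $E(S)$ forces $e = ef = f$. Maximality follows by a one-line argument: if $\rho$ is any idempotent-separating congruence and $(a,b) \in \rho$, then $(a^{-1}ea, b^{-1}eb) \in \rho$ is a pair of idempotents and is therefore a single element, so $(a,b) \in \mu$.

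For part (2), the substantive step is proving $\Ker \mu = Z(E(S))$. For the inclusion $Z(E(S)) \subseteq \Ker \mu$, commuting $a \in Z(E(S))$ with the idempotents $aa^{-1}$ and $a^{-1}a$ forces the two to coincide; setting $e$ to be this common value, the direct calculation $a^{-1}fa = a^{-1}af = ef = efe$ for all $f \in E(S)$ gives $(a,e) \in \mu$. For the reverse inclusion, suppose $(a,e) \in \mu$ with $e \in E(S)$; then congruence together with idempotent-separation forces $aa^{-1} = a^{-1}a = e$, and applying the defining identity to $(a^{-1},e)$ yields $afa^{-1} = fe$ for every $f \in E(S)$. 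Right-multiplying by $a$ and using $ea = a$ gives $afe = fa$, while the chain $af = aef = a(fe) = (af)e = afe$ gives $afe = af$. Therefore $af = fa$, so $a \in Z(E(S))$.

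For uniqueness, let $\rho$ be any idempotent-separating congruence with $\Ker \rho = Z(E(S))$. Part~(1) already yields $\rho \subseteq \mu$. For the reverse inclusion, note that any idempotent-separating congruence is contained in $\mathcal{H}$: if $(x,y)$ lies in such a congruence, then $(xx^{-1},yy^{-1})$ and $(x^{-1}x,y^{-1}y)$ are pairs of equal idempotents. Now if $(a,b) \in \mu$, then $a \,\mathcal{H}\, b$; setting $f := aa^{-1} = bb^{-1}$, the element $ab^{-1}$ lies in the group $\mathcal{H}$-class of $f$, and $(ab^{-1}, f) \in \mu$ places $ab^{-1}$ in $\Ker \mu = \Ker \rho$. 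Hence $ab^{-1} \,\rho\, g$ for some idempotent $g$, and since $\rho \subseteq \mathcal{H}$ while $f$ is the only idempotent in the $\mathcal{H}$-class of $ab^{-1}$, necessarily $g = f$. Right-multiplying by $b$ then gives $a \,\rho\, b$, so $\mu \subseteq \rho$.

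The main obstacle I anticipate is the reverse inclusion $\Ker \mu \subseteq Z(E(S))$: extracting genuine commutativity $af = fa$ from the weaker conjugation identity $a^{-1}fa = efe$ requires combining the defining condition for both $(a,e)$ and $(a^{-1},e)$ and carefully exploiting the equalities $ae = a = ea$ that hold in this specific situation.
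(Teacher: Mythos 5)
Your proof is correct. The paper does not prove this statement at all — it is quoted verbatim from Lawson's book as a preliminary — and your argument (congruence property via conjugation by idempotents, maximality via idempotent-separation, the two inclusions for $\Ker\mu = Z(E(S))$, and uniqueness via containment in $\mathcal{H}$ and the single idempotent in a group $\mathcal{H}$-class) is essentially the standard textbook proof, with the only implicit ingredient being the standard fact that congruences on inverse semigroups respect inversion.
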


An inverse semigroup is said to be \emph{fundamental} if $\mu$ is the equality relation.

\begin{prop}[{\cite[Ch. 5.2 Prop. 5]{LawsonBook}}]\label[proposition]{fundamentalprop}
	Let $S$ be an inverse semigroup. Then:
    \begin{enumerate}[1.]
        \item $S$ is fundamental if and only if $Z(E(S)) = E(S)$.
        \item $S/ \mu$ is fundamental.
    \end{enumerate}
\end{prop}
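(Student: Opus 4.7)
My plan is to lean on \cref{MaxCongruence}, which characterises $\mu$ both by maximality and by the identification $\Ker \mu = Z(E(S))$.

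For part~(1) the forward direction is essentially a kernel computation. If $S$ is fundamental, then $\mu$ is the equality relation, so every $\mu$-class is a singleton and the union of those meeting $E(S)$ is exactly $E(S)$; hence $\Ker \mu = E(S)$, and \cref{MaxCongruence}(2) gives $Z(E(S)) = E(S)$. For the converse, suppose $Z(E(S)) = E(S)$, so that $\Ker \mu = E(S)$, and take $(a,b) \in \mu$. Since congruences on inverse semigroups respect inversion and multiplication, one has $a^{-1}a \mathrel{\mu} b^{-1}b$ and $ab^{-1} \mathrel{\mu} bb^{-1}$. Idempotent-separation applied to the first gives $a^{-1}a = b^{-1}b$. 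For the second, $ab^{-1}$ is $\mu$-related to the idempotent $bb^{-1}$, so $ab^{-1} \in \Ker\mu = E(S)$; being an idempotent $\mu$-equivalent to $bb^{-1}$, idempotent-separation forces $ab^{-1} = bb^{-1}$. Hence
\[
a = a(a^{-1}a) = a(b^{-1}b) = (ab^{-1})b = (bb^{-1})b = b,
\]
so $\mu$ is trivial and $S$ is fundamental.

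For part~(2) my approach is to show that the identity is the only idempotent-separating congruence on $S/\mu$. Let $\pi\colon S \to S/\mu$ be the quotient map and let $\rho$ be an idempotent-separating congruence on $S/\mu$. The pullback $\rho' = \{(a,b)\in S\times S : \pi(a)\mathrel{\rho}\pi(b)\}$ is a congruence on $S$ containing $\mu$, and it remains idempotent-separating because $\pi$ is injective on $E(S)$ (by idempotent-separation of $\mu$). The maximality clause of \cref{MaxCongruence}(1) then forces $\rho' \subseteq \mu$, and combined with $\rho' \supseteq \mu$ this gives $\rho' = \mu$, so $\rho$ is the identity on $S/\mu$. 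Therefore the maximum idempotent-separating congruence on $S/\mu$ is trivial, which is precisely the statement that $S/\mu$ is fundamental.

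The main obstacle is the backward direction of~(1), where the information $\Ker\mu = E(S)$ must be upgraded to full triviality of $\mu$; the key manoeuvre is the decomposition $a = (ab^{-1})b$ combined with two applications of idempotent-separation. Part~(2) then drops out formally once the maximality characterisation is in hand.
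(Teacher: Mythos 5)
This result is quoted in the paper from Lawson's book without proof, so there is nothing internal to compare against; your argument is correct and is essentially the standard one. Both halves work as written: the kernel computation plus the decomposition $a = (ab^{-1})b$ with two uses of idempotent-separation correctly upgrades $\Ker \mu = E(S)$ to $\mu = \Delta$, and the pullback-and-maximality argument for $S/\mu$ is sound.
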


First introduced in \cite{MunnSemigroup}, the \emph{Munn semigroup} of a meet-semilattice $E$ is the inverse semigroup
$T_{E}$ consisting of all order-isomorphisms between principal order-ideals of $E$. Note that the idempotents of
$T_{E}$ are isomorphic to $E$.
There is the following representation
theorem from an inverse semigroup to the Munn semigroup of its meet-semilattice of idempotents.

\begin{thm}[{Munn representation, \cite[Lemma 3.1]{MunnSemigroup}}] \label{classical munn}
	Let $S$ be an inverse semigroup. Then there is an idempotent-separating homomorphism $\delta\colon S \to T_{E(S)}$ such that the $\ker\delta = \mu$ whose image is a wide inverse subsemigroup of $T_{E(S)}$.
\end{thm}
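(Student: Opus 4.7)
My plan is to construct the representation by sending each $s\in S$ to the conjugation map on the principal order-ideal $E(\mathbf{d}(s)):=\{f\in E(S):f\le \mathbf{d}(s)\}$. Specifically, define
\[
\delta(s)\colon E(\mathbf{d}(s))\to E(\mathbf{r}(s)),\qquad f\mapsto sfs^{-1}.
\]
First I would check that $sfs^{-1}$ is an idempotent below $\mathbf{r}(s)$ (using $f=fs^{-1}s$ since $f\le s^{-1}s$, so that $sfs^{-1}=(sfs^{-1})(sfs^{-1})$), and that $\delta(s^{-1})$ is a two-sided inverse, giving an order-isomorphism of principal order-ideals. Hence $\delta(s)\in T_{E(S)}$.

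Next I would verify that $\delta$ is a semigroup homomorphism. The composite $\delta(s)\delta(t)$ in $T_{E(S)}$ acts by $f\mapsto (st)f(st)^{-1}$ where defined, so it suffices to identify domains: $f\in\dom(\delta(s)\delta(t))$ means $f\le \mathbf{d}(t)$ and $tft^{-1}\le \mathbf{d}(s)$, while $\dom\delta(st)=E((st)^{-1}st)$. Since idempotents commute, a short calculation shows $t((t^{-1}s^{-1}st))t^{-1}=tt^{-1}s^{-1}s\le s^{-1}s$, and conversely $t^{-1}tft^{-1}t=f$ whenever $f\le t^{-1}t$, giving equality of the two domains. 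For any idempotent $e\in E(S)$, we see that $\delta(e)$ is the identity map on $E(e)$, which is the idempotent of $T_{E(S)}$ corresponding to $e$; this already shows that $\delta$ restricted to $E(S)$ is injective (hence the homomorphism is idempotent-separating) and that the image is wide, as every idempotent of $T_{E(S)}$ is of the form $\delta(e)$ for some $e\in E(S)$.

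The main obstacle, and the only part that is more than routine, is verifying $\ker\delta=\mu$. For the inclusion $\mu\subseteq\ker\delta$, given $(s,t)\in\mu$ I would first show that $\mathbf{d}(s)=\mathbf{d}(t)$ and $\mathbf{r}(s)=\mathbf{r}(t)$ by specialising the defining condition to $e=s^{-1}s$ and $e=t^{-1}t$. Then, for $g\le\mathbf{d}(s)$, setting $e:=sgs^{-1}\le\mathbf{r}(s)=\mathbf{r}(t)$ gives $s^{-1}es=g$, and applying $(s,t)\in\mu$ followed by conjugation by $t$ (using $e\le tt^{-1}$) recovers $tgt^{-1}=e=sgs^{-1}$, so $\delta(s)=\delta(t)$. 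For the reverse inclusion, from $\delta(s)=\delta(t)$ I immediately get $\mathbf{d}(s)=\mathbf{d}(t)$, $\mathbf{r}(s)=\mathbf{r}(t)$, and $s^{-1}gs=t^{-1}gt$ for every $g\le\mathbf{r}(s)$ by inverting the maps. For an arbitrary $e\in E(S)$, the idempotent $g:=e\cdot \mathbf{r}(s)$ lies below $\mathbf{r}(s)$ and satisfies $s^{-1}gs=s^{-1}es$, $t^{-1}gt=t^{-1}et$, so $s^{-1}es=t^{-1}et$, giving $(s,t)\in\mu$.
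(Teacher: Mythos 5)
The paper does not prove this theorem --- it is quoted as a classical result with a citation to Munn --- so there is no internal proof to compare against; your argument is the standard one and is essentially correct. It is also consistent with the paper's conventions: your $\delta(s)\colon f\mapsto sfs^{-1}$ on $\mathbf{d}(s)^{\downarrow}$ is exactly the map $\theta_{s^{-1}}$ appearing in the paper's proof of the generalised Munn representation (Theorem~\ref{Thm: General Munn}), where the inversion is inserted precisely so that, under the composition convention $gf=$ ``first $f$, then $g$'', one gets a homomorphism rather than an anti-homomorphism; your domain computation for $\delta(s)\delta(t)=\delta(st)$, the identification of the idempotents, wideness of the image, and both inclusions of $\ker\delta=\mu$ all check out. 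One small correction: to deduce $\mathbf{d}(s)=\mathbf{d}(t)$ from $(s,t)\in\mu$ you should specialise the defining condition $a^{-1}ea=b^{-1}eb$ to $e=ss^{-1}$ and $e=tt^{-1}$, not to $e=s^{-1}s$ and $e=t^{-1}t$. Indeed $a^{-1}ea=a^{-1}(aa^{-1}e)a$, so $s^{-1}(s^{-1}s)s=s^{-1}(ss^{-1}\cdot s^{-1}s)s$ is in general strictly below $\mathbf{d}(s)$ and that choice yields nothing; with $e=ss^{-1}$ one gets $s^{-1}s=t^{-1}(ss^{-1})t\le t^{-1}t$, the choice $e=tt^{-1}$ gives the reverse inequality, and $\mathbf{r}(s)=\mathbf{r}(t)$ then follows by running the same argument for $(s^{-1},t^{-1})\in\mu$. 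With that one substitution your proof is complete.
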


Observe that for a meet-semilattice $E$ the greatest lower bound of any pair of elements $e,f \in E$ in the natural partial order is $ef$. This is known as the
\emph{meet} of $e$ and $f$. Since all semilattices in this paper are meet-semilattices we will refer to them simply as semilattices.

Fix the following notation for functions: For functions $f\colon X \to Y$ and $g\colon Y \to Z$ the composition
$x \mapsto g(f(x))$ is denoted by the concatenation $gf$.
For any function $f\colon X \to Y$ the restriction of $f$ to a subset $X' \subseteq X$ is denoted by $f|_{X'}$.
The identity function on a set $X$ is denoted by $\id_{X}$. The domain of a function $f\colon X \to Y$ is denoted $\dom f = X$ and the image is denoted
$\ima f = f(X)$.

A \emph{presheaf of sets over a semilattice $E$} is  a collection of
sets $X_{e}$ for each $e \in E$ and maps $\phi^{e}_{f}\colon X_{e} \to  X_{f}$, for all $e \geq f$, called 
\emph{restriction maps}, such that
\[
	\phi^{e}_{e} = \id_{X_{e}}  \mbox{ and } \ \phi^{f}_{g} \phi^{e}_{f} = \phi^{e}_{g} \ \mbox{ for all } e \geq f \geq g 
.\]

Denote a presheaf over a semilattice $E$ by $(X_{e}, \phi^{e}_{f})$. A 
homomorphism from $(X_{e}, \phi^{e}_{f})$ to $(Y_{e}, \psi^{e}_{f})$ is a
\emph{natural transformation}: a collection of
functions $(\eta_{e}\colon X_{e } \to Y_{e})_{e \in E}$ such that the following square commutes for all $f \leq e$.
\begin{equation}\label{natural}
\begin{tikzcd}
		X_{e} \ar[r, "\eta_{e}"] \ar[d, "\phi^{e}_{f}"'] & Y_{e} \ar[d, "\psi^{e}_{f}"] \\
		X_{f} \ar[r, "\eta_{f}"] & Y_{f}
	\end{tikzcd}
\end{equation}

\section{Étale actions}

We recall the notion of an \emph{étale right action} introduced in \cite{funk2010universal}. We prefer to use
the term \emph{supported right action}, as in \cite{lawson2021morita}, to avoid overloading the term `étale'.
Let $X$ be a set and $S$ an inverse semigroup. A \emph{supported right action} of $S$ on $X$ is
a pair of functions
	 \[ X \times S \to  X, \ \mbox{denoted by} \ (x,s) \mapsto x \cdot s, \
 \mbox{and} \ p\colon X \to E(S) \]
satisfying the following three conditions for all $x \in X$ and $s,t \in S$:
\begin{enumerate}[({SA}\arabic*)]
	\item $(x \cdot s) \cdot t = x \cdot st.$ 
	\item $x \cdot p(x) = x$.
	\item $p(x \cdot s) = s^{-1}p(x)s$.
\end{enumerate}
We denote a supported right action by a triple $(X,S,p)$.
If $p$ is surjective, we say that $(X,S,p)$ has \emph{global support} or is a \emph{globally} supported right action.
In this paper we only deal with right actions and therefore will refer to them as supported actions.
Given two supported actions $(X,S,p)$ and $(Y,T,q)$ a \emph{homomorphism} $(X,S,p) \to (Y,T,q) $ is a pair $(\alpha, \theta)$ 
consisting of
a function $\alpha\colon X \to Y$ and a homomorphism $\theta\colon S \to T$ where $\alpha(x \cdot s) = \alpha(x) \cdot \theta(s)$, for all $x \in X$ and $s \in S$,
and such that
the following diagram commutes.
\[
	\begin{tikzcd}
		X \ar[d, "p"'] \ar[r, "\alpha"]  & \ar[d, "q"] Y \\
						 E(S) \ar[r, "\theta|_{E(S)}"']& E(T)
	\end{tikzcd}
\]

By fixing an inverse semigroup $S$ one may consider the category of all supported actions of $S$, 
called the \emph{classifying topos} of $S$, 
introduced in \cite{funk2010universal}. The morphisms of this category are homomorphisms $(\alpha, \theta)$ such that
$\theta $ is the identity on $S$. In \cite{LawStein2004}, it was
proved explicitly that the classifying topos is equivalent to a presheaf category. We recall this result now to give justification
for studying supported actions. See \cite{sheaves} for background on category theory.
Let $S$ be an inverse semigroup. Define $C^{l}(S)$, the \emph{(left) Leech category} of $S$, to be the category objects being the idempotents of $S$
and morphisms $e \to f$ the pairs $(f,a)$ with $f \in E(S)$ and $a \in S$  satisfying $e = \mathbf{d}(a)$ and
$\mathbf{r}(a) \leq f$. Composition is defined by $(g,b)(f,a) = (g,ba)$.
This category was first defined in \cite{Loganathan}, and further studied by Leech in \cite{LeechConstruction}.
Recall that for an arbitrary small category $C$, an \emph{action of $C $}, that is a functor from $C^{\op}$ to the category
of sets, is called a \emph{presheaf of sets over $C$}.

\begin{thm}[{\cite[Theorem 4.3]{LawStein2004}}] \label{Leechcat}
	Let $S$ be an inverse semigroup.
	The category of supported actions of $S$ is equivalent to the category of actions of the Leech category $C^{l}(S)$.
\end{thm}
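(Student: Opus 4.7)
The plan is to exhibit a pair of functors between the category of supported actions of $S$ and the category of presheaves on $C^{l}(S)$, and to verify that they are mutually quasi-inverse. Starting from a supported action $(X,S,p)$, I would define a presheaf $F_{X}\colon (C^{l}(S))^{\op} \to \Set$ on objects by $F_{X}(e) := p^{-1}(e)$ and on a morphism $(f,a)\colon e \to f$ of $C^{l}(S)$ by $F_{X}(f,a)(x) := x \cdot a$. Two small verifications are needed: first, that $x \cdot a$ lies in $F_{X}(e)$, which follows from (SA3) together with the identity $fa = a$ implied by $aa^{-1} \leq f$; and second, functoriality, which is exactly (SA1) for composition and (SA2) for identities.

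For the reverse direction, given a presheaf $F\colon (C^{l}(S))^{\op} \to \Set$, I would set $X := \bigsqcup_{e \in E(S)} F(e)$ with $p(x) = e$ whenever $x \in F(e)$, and define the action on $x \in F(e)$ by $x \cdot s := F(e, es)(x)$. This uses that $(e, es)\colon s^{-1}es \to e$ is always a morphism of $C^{l}(S)$, since $\mathbf{d}(es) = s^{-1}es$ and $\mathbf{r}(es) = ess^{-1}e \leq e$. Axioms (SA2) and (SA3) are immediate, and (SA1) reduces, via functoriality of $F$, to the equality $(e, es) \cdot (s^{-1}es, s^{-1}est) = (e, est)$ in $C^{l}(S)$; this in turn amounts to the identity $es \cdot s^{-1}est = est$ in $S$, an easy consequence of idempotents commuting.

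On morphisms the translation is nearly tautological: a morphism $(\alpha, \id_{S})\colon (X,S,p) \to (Y,S,q)$ of supported actions restricts to a family of functions $F_{X}(e) \to F_{Y}(e)$, and the equivariance condition $\alpha(x \cdot s) = \alpha(x) \cdot s$ corresponds exactly to naturality with respect to all morphisms of $C^{l}(S)$; the converse is just as direct. Applying the two constructions in either order then recovers the original object up to a canonical natural isomorphism. The one non-routine step is the verification of (SA1) in the presheaves-to-actions direction: this is the unique spot where the product structure of $S$ interacts with functoriality of $F$, and requires manipulating idempotents to simplify $ess^{-1}est$ to $est$. Everything else is direct diagrammatic bookkeeping.
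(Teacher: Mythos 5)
Your construction is correct and is the standard equivalence: the paper itself quotes this result from Lawson--Steinberg without reproving it, and the explicit proof it does give for the semilattice special case (\Cref{NoFunc}) follows exactly your pattern of passing between $\bigsqcup_e p^{-1}(e)$ and the fibres of a presheaf. Your identification of the only non-routine point --- that (SA1) corresponds to the identity $(e,es)(s^{-1}es,s^{-1}est)=(e,est)$ in $C^{l}(S)$, which reduces to $es\cdot s^{-1}est=est$ via commuting idempotents --- is accurate, and the remaining verifications (including that the round trip returns $x\cdot s = x\cdot(p(x)s)$ on the nose) are indeed routine.
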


Steinberg \cite{steinberg2009strong} defined a partial order on each supported action as follows:
$x \leq y$ if and only if $x= y \cdot p(x)$.
\begin{lem}[{\cite[Proposition 3.2]{steinberg2009strong}}] \label[lemma]{steinlem}
	Let
    $(X,S,p)$ be a supported action. Then, the following are equivalent:
    \begin{enumerate}[1.]
        \item $x = y \cdot p(x)$.
        \item $x = y \cdot e$  for some $e\in E(S)$.
    \end{enumerate}
    Moreover, the partial order $\leq$ is compatible with the action of $S$, and $p$ is order preserving.
\end{lem}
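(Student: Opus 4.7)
The plan is to first establish the equivalence of (1) and (2), which provides a convenient alternative characterisation of $\leq$, and then use this characterisation to prove compatibility and order-preservation.

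The direction (1) $\Rightarrow$ (2) is immediate by taking $e = p(x)$. For (2) $\Rightarrow$ (1), I would start from $x = y \cdot e$ and apply (SA3) to get $p(x) = e p(y) e$; since $E(S)$ is a semilattice its elements commute and $e$ is self-inverse, so this simplifies to $p(y) e$. Then by (SA1) and (SA2),
\[
	y \cdot p(x) = y \cdot (p(y) e) = (y \cdot p(y)) \cdot e = y \cdot e = x.
\]

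For compatibility, I fix $s \in S$ and assume $x \leq y$, aiming to show $x \cdot s \leq y \cdot s$. By the equivalence just proved, it is enough to exhibit an idempotent $f$ with $x \cdot s = (y \cdot s) \cdot f$. The \emph{key observation} is that the idempotents $p(x)$ and $ss^{-1}$ commute in $E(S)$, which gives
\[
	p(x) s = p(x) (s s^{-1}) s = (s s^{-1}) p(x) s = s \cdot (s^{-1} p(x) s).
\]
Setting $f = s^{-1} p(x) s \in E(S)$, the action axioms then yield
\[
	x \cdot s = (y \cdot p(x)) \cdot s = y \cdot (s f) = (y \cdot s) \cdot f,
\]
as required. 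I expect this to be the main step: the subtlety is recognising that the commutativity of idempotents in $E(S)$ lets one move $p(x)$ past $ss^{-1}$ and hence rewrite $p(x)s$ in the form $s\cdot f$.

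Order-preservation of $p$ follows by applying $p$ to the identity $x = y \cdot p(x)$ and using (SA3):
\[
	p(x) = p(y \cdot p(x)) = p(x) p(y) p(x) = p(x) p(y),
\]
where the final equality uses commutativity of idempotents together with $p(x)^{2}=p(x)$. This is precisely the statement $p(x) \leq p(y)$ in $E(S)$.
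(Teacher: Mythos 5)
Your proof is correct in all parts: the equivalence, the compatibility of $\leq$ with the action, and the order-preservation of $p$ all check out, and the key step of rewriting $p(x)s = s(s^{-1}p(x)s)$ via commutativity of the idempotents $p(x)$ and $ss^{-1}$ is exactly the right move. Note that the paper itself gives no proof of this lemma --- it is quoted from Steinberg \cite[Proposition 3.2]{steinberg2009strong} --- so your argument serves as a complete self-contained verification of the cited result rather than an alternative to anything in the text.
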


Let $(X,S,p)$ be a supported action. Define a \emph{subaction} of $(X,S,p)$ to be a subset $Y$ of $X$ that is closed under
the action $X \times S \to X$. Then,
$(Y,S,p|_{Y})$ is a supported action.

\begin{lem} \label[lemma]{subact ideal}
	Let $(X,S,p)$ be a supported action. Every subaction of $(X,S,p)$ is an order-ideal of $X$.
\end{lem}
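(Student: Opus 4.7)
The plan is to unpack the definition of order-ideal and invoke \cref{steinlem} directly. An order-ideal of $X$ with respect to the partial order $\leq$ from \cref{steinlem} is a subset $Y\subseteq X$ such that whenever $y\in Y$ and $x\leq y$ in $X$, one has $x\in Y$. So I need to show: if $Y$ is a subaction of $(X,S,p)$, $y\in Y$, and $x\leq y$, then $x\in Y$.

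First I would take $x\leq y$ with $y\in Y$ and apply the equivalence in \cref{steinlem}: the relation $x\leq y$ means $x=y\cdot p(x)$, and in particular $x=y\cdot e$ for the idempotent $e=p(x)\in E(S)\subseteq S$. Since $Y$ is by hypothesis closed under the action $X\times S\to X$ and $y\in Y$, the element $y\cdot e$ lies in $Y$. Therefore $x=y\cdot e\in Y$, which is exactly what order-ideal closure requires.

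There is really no obstacle here; the lemma is essentially a translation of the definition of $\leq$ into the closure property defining a subaction. The only subtlety worth flagging explicitly is that the partial order on $X$ used in the statement is the one introduced just before \cref{steinlem}, and one uses condition (2) of that lemma (existence of some $e\in E(S)$ with $x=y\cdot e$) rather than condition (1), since it is the action by an \emph{arbitrary} element of $S$ (in particular by idempotents) under which $Y$ is assumed closed.
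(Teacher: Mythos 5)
Your proof is correct and is essentially identical to the paper's: both take $x\leq y$ with $y\in Y$, unwind the partial order to $x=y\cdot p(x)$ with $p(x)\in E(S)\subseteq S$, and conclude $x\in Y$ by closure of $Y$ under the action of $S$. The paper simply cites the definition of the partial order directly rather than routing through \cref{steinlem}, but the content is the same.
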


\begin{proof}
	Let $Y$ be a subaction of $(X,S,p)$. Let $x \in X$ and $y \in Y$ such that $x \leq y$. Then, 
	since $Y$ is closed under the action of $S$ and by definition of the partial order,
	$x =y \cdot p(x) \in Y$. 
\end{proof}

We now give some examples of where supported actions arise.

\begin{example} 
	Let $G$ be a group. Then $G$ has a unique idempotent and hence for any supported action the map $p\colon X \to E(G)$ is
	trivial. Therefore, every group action is a globally supported action.
\end{example}

\begin{example} \label[example]{clifford subaction}
	Let $S$ be an inverse semigroup. Define an action $E(S) \times S \to E(S)$ by $e \cdot s = s^{-1}es$.
	Then, $(E(S),S,\id_{E(S)})$ is a supported action. We now show the result,
	as stated in \cite{steinberg2009strong}, that this supported action
	is the terminal object in the classifying topos of $S$.
	Let $(X,S,p)$ be a supported action. Then, the pair $(p, \id_{S})$ is a homomorphism from
	$(X,S,p)$ to $(E(S),S,\id_{E(S)})$ since $p(x \cdot s) = s^{-1}p(x)s = p(x) \cdot s$ and
	$\id_{E(S)} p = p$. Suppose $(f, \id_{S})$ was another such homomorphism, then
	$f = \id_{E(S)} f = p$.
	Therefore, $p\colon X \to E(S)$ is the unique such homomorphism.
\end{example}

\begin{example} \label[example]{right ideal}
	Let $S$ be an inverse semigroup. Define an action $S \times S \to S $ by $s \cdot t = st$ and map
$p\colon S \to E(S)$ by
	$p(s) = \mathbf{d}(s)$. Note that $s \cdot p(s) = s \cdot \mathbf{d}(s) = s$ and 
	\[
		p(s \cdot t) = \mathbf{d}(st) = t^{-1}s^{-1}st = t^{-1} \mathbf{d}(s) t = t^{-1}p(s) t
	.\]
Therefore, $(S,S,p)$ is a supported action. Let $I$ be a subaction of $(S,S,p)$. Then, by definition, $I$ is closed
under composition on the right by $S$. Hence, $I$ is a right ideal of $S$. Observe that the right ideals of $S$ are 
therefore exactly the
subactions of $(S,S,p)$.
\end{example}

\begin{example} 
Let $S$ be an inverse semigroup. We define a \emph{supported action on a Clifford semigroup} to be a supported 
action $(A,S,p)$ where $A$ is a Clifford semigroup, $p$ is an idempotent-separating surjective homomorphism, and
$S$ acts by endomorphisms on $A$, that is
\[
	(a_1  a_2)\cdot s = (a_1 \cdot s) (a_2 \cdot s)
\]
for all $a_1,a_2 \in A$ and $s \in S$.
Note that, as in \Cref{Leechcat}, a supported action of $S$ on a Clifford semigroup is a equivalent to a presheaf of groups over $C^{l}(S)$, the
Leech category of $S$.
Let $S$ be an inverse semigroup. It is well-known, \cite[Chap. 5.2]{LawsonBook},
that $Z(E(S))$ is a Clifford semigroup and a normal subsemigroup of $S$.
Define
	$p\colon Z(E(S)) \twoheadrightarrow E(S)$ by $p(n) = \mathbf{d}(n)$. Then, $p$ restricted to the idempotents
	is bijective. Let $n_1,n_2 \in Z(E(S))$. Then, since $n_2$ commutes with the idempotents,
	$\mathbf{d}(n_1n_2) = n_2^{-1}\mathbf{d}(n_1)n_2
	= \mathbf{d}(n_1) \mathbf{d}(n_2)$. Hence, $p$ is an idempotent-separating surjective homomorphism. Define an action
	$Z(E(S)) \times S \to Z(E(S))$ by $n \cdot s = s^{-1} n s$.
	Since $Z(E(S))$ is closed under conjugation the action is a well-defined function. Since each
	$n \in Z(E(S))$ commutes with all $e \in E(S)$. Then
	\[
		n \cdot p(n) = n \cdot \mathbf{d}(n) = \mathbf{d}(n)^{-1} n   \mathbf{d}(n) = n\mathbf{d}(n)^{-1}   \mathbf{d}(n) = n
		\mathbf{d}(n) = n
	\]
	and 
	\[
		p(n \cdot s) = \mathbf{d}(s^{-1} n s) = s^{-1}n^{-1}(ss^{-1})ns= s^{-1} n^{-1}n (ss^{-1}) s = s^{-1} \mathbf{d}(n) s =
		s^{-1} p(n) s
	\]using the fact that $\mathbf{d}(n)$ and $ss^{-1}$ are in $E(S)$. Let $n_1,n_2 \in N$ and $s \in S$. Then, since
	$n_2$ commutes with all the idempotents,
	 \[
		 (n_1 \cdot s)(n_2 \cdot s) = s^{-1} n_1 ss^{-1} n_2 s = s^{-1} n_1 n_2 (ss^{-1}) s = (n_1n_2)\cdot s
	 .\]
	Therefore, $(Z(E(S)), S,p)$ is a supported action on a Clifford semigroup.

	A subaction of a supported action on a Clifford semigroup are those subactions $N \subseteq S$ which
	are Clifford semigroups and for which $p$ restricted to $N$ is still idempotent-separating and surjective.
	Let $N$ be a subaction of $(Z(E(S)), S, p)$. Then, $N$ is a normal inverse subsemigroup since
	it is closed under the action of $S$ and $p|_{N}$ is surjective.
	By \cite[Chap. 5.2]{LawsonBook}
	$N$ is a subaction of  $Z(E(S))$ if and only if $N$ is the Kernel of an idempotent-separating congruence on $S$.
\end{example}

\begin{example}
Let  $S$ be an inverse semigroup.
Let $(A,S,p)$ be a supported action on a Clifford semigroup $A$ where $A$ is abelian.
 We call such an action
a supported action on an abelian inverse semigroup. Specialising \Cref{Leechcat} this corresponds to a presheaf
of abelian groups over $C^{l}(S)$.
In \cite{Lausch1975} the notion of $S$-module is used to define a cohomology
theory of inverse semigroups. Let $S$ be an inverse semigroup. A \emph{right $S$-module} is an abelian inverse semigroup $A$  with an action
    $A \times S \to A$  and an isomorphism $\theta\colon E(S) \to E(A)$ denoted by $e \to 0_e$, satisfying:
    \begin{enumerate}[1.]
        \item $(a_1 + a_2)\cdot s = a_1 \cdot s +a_2 \cdot s$ for $a_1,a_2 \in A$ and $s \in S$.
        \item $(a \cdot  s_1) \cdot s_2 = a \cdot (s_1 s_2)$ for $a \in A$ and $s_1,s_2 \in S$.
        \item $a \cdot e = a + 0_e$ for $a \in A$ and $e \in E(S)$.
        \item $0_e \cdot s = 0_{s^{-1}es}$ for $e \in E(S)$ and $s \in S$.
    \end{enumerate}
We show that $S$-modules are the same as supported actions on an abelian inverse semigroup as first
noted in \cite{steinberg2023twists}.
Let $(A,S,p)$ be a supported action. 
Since  $p$ restricts to an isomorphism on the idempotents denote
by  $0_{e}$ the unique idempotent of $A$ such that $p(0_{e}) = e$ for each $e \in E(S)$. All conditions to be an $S$-module
are then already assumed
except (3) and (4).
By (1) the action of $S$ maps idempotents of $A$ to idempotents.
By (SA3)
$p(0_{e} \cdot s) = s^{-1}es$ hence, since $p$ is idempotent-separating, $0_{s^{-1}es} = 0_{e} \cdot s$ for all
$e \in E(S)$ and $s \in S$. So (4) holds. In particular, $0_{e} \cdot f = 0_{f} \cdot e$ for all $e,f \in E(S)$.
    Let $a \in A$. By (1), $\mathbf{d}(a) = \mathbf{d}(a) \cdot p(a)$. Hence,
    \[
	    p(\mathbf{d}(a)) \leq p(a) = p(0_{p(a)}).
    \]
   Since $p|_{E(A)}$ is an isomorphism then $\mathbf{d}(a) \leq 0_{p(a)}$ and 
    $a = a + 0_{p(a)}$. 
   Let $e \in E(S)$. Using that $p(a + 0_{e}) = p(a)e$ it is shown that    \[
	    a \cdot e = ( a + 0_{p(a)}) \cdot e = (a + 0_{e}) \cdot ep(a) = (a + 0_{e}) \cdot p(a + 0_{e}) = a + 0_{e}.
    \]
\end{example}

\begin{example}
	In \cite{steinberg2023twists,Lausch1975} it is shown that idempotent-separating extensions give rise to supported right
	actions on abelian inverse semigroups. We recall the construction now.
Let $j\colon T \to S$ be an idempotent-separating surjective homomorphism such that $\Ker j$ is an abelian inverse semigroup.
Let $k\colon S \to T$ be a set-theoretic section of $j$.
Define a function $\Ker j \times S \to \Ker j$ by
$t \cdot s = k(s)^{-1}tk(s)$. Note that $j(k(s)^{-1}tk(s)) = s^{-1}j(t)s \in E(S)$ for all $t \in \Ker j$ and $s \in S$ so
the function is well-defined.
We work to show that $(\Ker j, S, j)$ with this function is a supported action on an abelian inverse semigroup.
Note that (SA1) and (SA3) follow immediately from the 
definition of the action, and by assumption $j$ is a idempotent-separating surjective homomorphism. 
Let $t \in \Ker j$ and denote  $s = k(j(t))$. Since $j(s) = j(t)$ it follows that $s \in \Ker j$ and
$\mathbf{d}(s) = \mathbf{d}(t)$. Hence, since $\Ker j$ is abelian,
\[
	t \cdot j(t) = s^{-1}ts = t s^{-1}s = t \mathbf{d}(t) = t
.\]
So (SA2) holds. 
The last condition to check is that $S$ acts by endomorphisms on $\Ker j$.
Let $t_1,t_2 \in \Ker j$ and $s \in S$. Then, by \Cref{MaxCongruence} $\Ker j \subseteq Z(E(T))$, hence
\[
	(t_1t_2)\cdot s = k(s)^{-1}(t_1t_2)k(s) = (k(s)^{-1}t_1k(s))(k(s)^{-1}t_2k(s)) = (t_1 \cdot s)(t_2 \cdot s)
.\]
We now show that this action does not depend on the section chosen. Let $k_1,k_2: S \to T$ be two set-theoretic sections
of $j$ and $s \in S$.  Then, $j(k_1(s)) = j(k_2(s)) = s$ and
$k_1(s)k_2(s)^{-1} \in \Ker j$, denote $h = k_1(s)k_2(s)^{-1}$. 
Since $(k_1(s),k_2(s)) \in \ker j $ it implies that $\mathbf{d}(k_1(s)) = \mathbf{d}(k_2(s))$.
Therefore, $k_1(s) = k_1(s)k_2(s)^{-1}k_2(s) = hk_2(s)$.
Let $t \in \Ker j $. Then, since $\Ker j$ is abelian and $\mathbf{d}(h) = \mathbf{r}(k_2(s))$,
\[
	k_1(s)^{-1}tk_1(s) = k_2(s)^{-1}h^{-1}thk_2(s) = k_2(s)^{-1}th^{-1}hk_2(s) = k_2(s)^{-1}tk_2(s)
.\]
\end{example}

\begin{example}
	This example was described in \cite{LawKud2014}. Note that every supported action $(X,S,p)$ can be
	restricted to the action of the idempotents to get a supported action $(X, E(S),p)$. This is a forgetful functor
	from supported actions of $S$ to supported actions of $E(S)$. In \cite{LawKud2014} it was shown
	there exists a left adjoint to this 
	forgetful functor; we call this the \emph{free supported action} which we now 
	describe.
	Let $S$ be an inverse semigroup. Let $(X,E(S),p)$ be a supported action of $E(S)$ on $X$ with action map
	$X \times E(S) \to X$. Define 
	\[
		S \ast X = \{(s,x) \in S \times X : \mathbf{r}(s) = p(x)\}
	\] and $d\colon S \ast X \to E(S)$ by $d(s,a) = \mathbf{d}(s)$. Define $(S \ast X) \times S \to S \ast X $ by
	$(t,x) \cdot s = (ts,x \cdot \mathbf{r}(ts))$ which is well-defined because $\mathbf{r}(ts) \leq \mathbf{r}(t)$.
	We now that show that this does in fact define a supported action.
	(SA1) follows from $\mathbf{r}(ts_1s_2) \leq \mathbf{r}(ts_1)$ for all $t,s_1,s_2 \in S$. Let $(t,x) \in S \ast X$. Then,
	(SA2) holds since
	\[
		(t,x) \cdot \mathbf{d}(t) = (t, x \cdot \mathbf{r}(t)) = (t,x \cdot p(x)) =(t,x).
	\]
	Axiom (SA3) holds since, for
	all $t,s \in S$ and $x \in X$,
	\[
		d(ts, x \cdot \mathbf{r}(ts)) = \mathbf{d}(ts) = s^{-1} \mathbf{d}(t)s = s^{-1} d(t, x)s
	.\]
	Therefore, $(S \ast X, S, d)$ is a supported action.

\begin{example}
	Given a presheaf of groups over a semilattice it is well-known \cite{Clifford1941} that one
can construct a Clifford semigroup called the \emph{strong semilattice of groups} and that every 
Clifford semigroup is isomorphic to a strong semilattice of groups,
see \cite[Chap. 5.2]{LawsonBook}. 
We extend this result to supported actions of a Clifford semigroup.
Define a \emph{group action} to be a triple $(X,G, \circ)$ such that $X$ is a set, $G$ is a group, and 
$\circ\colon X \times G \to X$ defines a group action of $G$ on $X$. A \emph{homomorphism of group actions} from
$(X, G, \circ_{G} )$ to $(Y, H, \circ_{H})$ is a pair
$(\phi, \psi) $ consisting of a function $\phi\colon X \to Y$, and a group homomorphism $\psi\colon G \to H$ such that
$\phi(x \circ_{G} g) = \phi(x) \circ_{H} \psi(g)$ for all $x \in X $ and $g \in G$.
A  \emph{presheaf of group actions over a semilattice} $E$ is a collection of group actions
$(X_{e}, G_{e}, \circ_{e})$ for each $e \in E$ and a collection of homomorphisms of group actions
$(\phi^{e}_{f}, \psi^{e}_{f})$ for all $f \leq e$
such that: $\phi^{e}_{e} = \id_{X_{e}}$, $\psi_{e}^{e} = \id_{G_{e}}$, and for all $h \leq f \leq e$ 
\[
	\phi^{f}_{h} \phi^{e}_{f} = \phi^{e}_{h} \quad \mbox{ and } \quad \psi^{f}_{h} \psi^{e}_{f} = \psi^{e}_{h}
.\]
Let $X = \bigsqcup_{e \in E} X_{e}$ and $S = \bigsqcup_{e \in E} G_{e}$. Then, $S$ is a Clifford semigroup with product defined by
$s \otimes t = \phi^{e}_{ef}(s)\phi^{f}_{ef}(t)$, where $s \in G_{e}$ and $t \in G_{f}$.
Define, $p\colon X \to E(S)$ by $p(x) = 1_{e}$ the identity of $G_{e}$, where $x \in X_{e}$, and $\cdot\colon X \times S \to X$ by
$x \cdot s = \phi^{e}_{ef}(x) \circ_{ef} \psi^{e}_{ef}(s)$, where $x \in X_{e}$ and $s \in G_{f}$.
Let $x \in X_{e}$, $s \in G_{f}$, and $t \in G_{h}$. Then, by the definition of homomorphism of group actions, (SA1) follows from:
\[
	\phi^{ef}_{efh}(\phi^{e}_{ef}(x) \circ _{ef} \psi^{f}_{ef}(s)) \circ_{efh} \psi^{h}_{efh}(t)
	= (\phi^{e}_{efh}(x) \circ_{efh} \psi^{f}_{efh}(s)) \circ_{efh} \psi^{h}_{efh}(t) = x \cdot (s \otimes t)
.\]
Moreover, (SA2) and (SA3) follow from
 \[
	x \cdot  p(x) = \phi^{e}_{e}(x) \circ_{e} 1_{e} = x \circ_{e} 1_{e} = x
\]
and
\[
	p(x \cdot s) = p( \phi^{e}_{ef}(x) \circ_{ef} \psi^{f}_{ef}(s)) = 1_{ef} = s^{-1} \otimes 1_{e} \otimes s
.\]
Therefore, from a presheaf of group actions over a semilattice we constructed a supported action $(X,S,p)$,
we call this action 
a \emph{strong semilattice of group actions}.
We show that every supported action of a Clifford semigroup is isomorphic to a strong semilattice of group actions.
	Let $(X,S,p)$ be a supported action such that $S$ is Clifford. Then,
	$S = \bigcup_{e \in E(S)} G_{e} $ is a disjoint union of groups $G_{e} = \{s \in S : \mathbf{d}(s) = e\}$.
	Let $x \in X_{e}$ and $s \in G_{e}$. Then,
	\[
		p(x \cdot s) = s^{-1}p(x)s = p(x)s^{-1}s = ee = e.
	\]
	Hence, for each $e \in E(S)$ we can restrict the action $X \times S \to X$ to the 
	action $\circ_{e}\colon X_{e} \times  G_{e} \to X_{e}$ and this is a group action.
	For all $f \leq e$
	define $\phi^{e}_{f}\colon X_{e} \to X_{f}$ by $x \mapsto x \cdot f$ and $\psi^{e}_{f}\colon G_{e} \to G_{f}$ by
	$s \mapsto sf$. Then, the collection of $(X_{e}, G_{e}, \circ_{e})$ and $(\phi^{e}_{f}, \psi^{e}_{f})$ for all
	$f \leq e$ defines a presheaf of group actions over the semilattice $E(S)$. 
	The associated strong semilattice of group actions is isomorphic to $(X,S,p)$.
\end{example}
\end{example}

\section{Presheaves}
By fixing a semilattice $E$, one may consider the category of presheaves over $E$ with morphisms being natural
transformations.
In \cite[p. 393]{Goldblatt} it was first noted that a presheaf over $E$ can be seen as a supported action of $E$.
This is a special case of \Cref{Leechcat} since the category $C^{l}(E)$ is the usual poset category of a semilattice. We give
an explicit proof below.

\begin{thm}\label{NoFunc}
	Let $E$ be a semilattice. 
	The category of presheaves over $E$ is equivalent to the category of supported actions of $E$.
\end{thm}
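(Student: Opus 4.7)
The plan is to exhibit explicit quasi-inverse functors $F$ and $G$ between the two categories and check that the composites are naturally isomorphic to the respective identity functors. Because $E$ is a semilattice every element is idempotent and $f \leq e$ is equivalent to $ef = f$, and conjugation in $E$ is trivial (so $f^{-1}ef = ef$); these two facts will be used constantly.

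Define $F$ from presheaves to supported actions as follows. Given a presheaf $(X_e, \phi^e_f)$, let $X = \bigsqcup_{e \in E} X_e$, let $p \colon X \to E$ send $x \in X_e$ to $e$, and define $x \cdot f := \phi^e_{ef}(x)$ for $x \in X_e$ and $f \in E$. Axiom (SA1) follows from the compatibility $\phi^{ef}_{efg}\phi^e_{ef} = \phi^e_{efg}$, axiom (SA2) from $\phi^e_e = \id_{X_e}$, and (SA3) from the computation $p(x \cdot f) = ef = fef = f^{-1}p(x)f$. On morphisms, a natural transformation $(\eta_e)_{e\in E}$ is sent to the function $\alpha \colon \bigsqcup X_e \to \bigsqcup Y_e$ whose restriction to $X_e$ is $\eta_e$; the naturality square \eqref{natural} instantly yields $\alpha(x \cdot f) = \alpha(x) \cdot f$ and by construction $q\alpha = p$, so $(\alpha, \id_E)$ is a morphism of supported actions.

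Define $G$ in the reverse direction. Given a supported action $(X,E,p)$, set $X_e := p^{-1}(e)$; for $f \leq e$ let $\phi^e_f \colon X_e \to X_f$ send $x$ to $x \cdot f$. Because $f \leq e$ means $ef = f$, axiom (SA3) gives $p(x \cdot f) = f^{-1}ef = f$, so $\phi^e_f$ lands in $X_f$. Axiom (SA2) yields $\phi^e_e = \id_{X_e}$, and (SA1) gives the cocycle condition $\phi^f_g\phi^e_f = \phi^e_g$ whenever $g \leq f \leq e$. For a morphism $(\alpha, \id_E)$ of supported actions, the identity $q\alpha = p$ forces $\alpha(X_e) \subseteq Y_e$, so setting $\eta_e := \alpha|_{X_e}$ makes sense; the compatibility $\alpha(x \cdot f) = \alpha(x) \cdot f$ is precisely the naturality square for $\eta$. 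Both $F$ and $G$ preserve identities and composition since they act by partition/restriction.

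Finally I check the composites. Applied to a presheaf, $GF$ reproduces the sets $X_e$ as the fibres $p^{-1}(e)$ and the restriction map $\phi^e_f$ as $x \mapsto x \cdot f = \phi^e_{ef}(x) = \phi^e_f(x)$ (using $ef = f$), so $GF = \id$ on objects; on morphisms it is the identity by construction. Applied to a supported action, $FG$ produces the set $\bigsqcup_e p^{-1}(e)$ together with the map sending $x \in p^{-1}(e)$ and $f \in E$ to $x \cdot f \in p^{-1}(ef)$, and the canonical bijection $\bigsqcup_e p^{-1}(e) \to X$ is an isomorphism of supported actions that is natural in $(X,E,p)$. This gives the claimed equivalence (in fact an isomorphism of categories on objects, and an equivalence overall). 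There is no real obstacle: the only step that requires attention is checking that the restriction $\alpha|_{X_e}$ indeed corestricts to $Y_e$, which follows immediately from the defining commutative square of a homomorphism of supported actions together with $\theta = \id_E$.
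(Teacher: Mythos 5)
Your proposal is correct and follows essentially the same route as the paper: disjoint union with $p$ the fibre projection and $x\cdot f = \phi^e_{ef}(x)$ in one direction, fibres $p^{-1}(e)$ with $\phi^e_f(x) = x\cdot f$ in the other, and the same identification of natural transformations with action homomorphisms. You spell out the verification that the two composites are (naturally isomorphic to) the identities, which the paper compresses into the remark that the constructions are mutually inverse.
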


\begin{proof}
Let $(X_{e}, \phi^{e}_{f})$ be a presheaf over $E$.
	Define $X \coloneqq \bigsqcup_{e \in E} X_{e}$ the disjoint union of all the $X_{e}$, and $p\colon X \to E$ by
	$p(x) = e$ if $x \in X_{e}$.
	Define the action $X \times E \to X$ by 
	$x \cdot f = \phi^{e}_{fe}(x)$ where $x \in X_{e}$. Note that
	\[
		(x \cdot f) \cdot h = \phi^{fe}_{hfe} \phi^{e}_{fe}(x) = \phi^{e}_{hfe}(x) = x \cdot fh.
	\] Moreover, $x \cdot p(x) = \phi^{e}_{e}(x) = \id_{X_{e}}(x) = x$ and
	$p(x \cdot f) = ef = p(x)f$. Hence, $(X,E,p)$ is a supported action.
	Conversely, suppose $(X,E,p)$ is a supported action and define $X_{e} \coloneqq p^{-1}(e)$ and $\phi^{e}_{h}
	(x) = x \cdot h$ for all $e \in E$ and $h \leq e$. Then, if $g \leq f \leq e$ by definition $fg = g$ and hence
	\[
		\phi^{f}_{g} \phi^{e}_{f}(x) = (x \cdot f) \cdot g = x \cdot fg = x \cdot g =  \phi^{e}_{g}(x)
	.\]
	Note that $\phi^{e}_{e}(x) = x \cdot p(x) = x$ for all $x \in X_{e}$, so $\phi^{e}_{e}=\id_{X_{e}}$.
	Observe  that these constructions are inverse to each other.
	Let $(\eta_{e})_{e \in E}$
	be a natural transformation from $(X_{e}, \phi^{e}_{f})$ to $(Y_{e}, \psi^{e}_{f})$.
	Then,
	for $X = \bigsqcup_{e \in E} X_{e}$ and $Y = \bigsqcup_{e \in E} Y_{e}$ define a map
	$\eta\colon X \to Y $ by $\eta(x) = \eta_{e}(x)$ for $x \in X_{e}$. By definition
	$ \eta(x) \in Y_{e}$ and hence $p \eta (x) = p(x)$.
	By the diagram (\ref{natural}) if $x \in X_{e}$ and $f \in E$ then
	\[
		\eta(x \cdot f) = \eta(\phi^{e}_{ef}(x)) = \psi^{e}_{ef}(\eta(x)) = \eta(x) \cdot f
	.\]
	Conversely, for a homomorphism of supported actions $\alpha\colon X \to Y$ define
	$\alpha_{e}$ to be the restriction of $\alpha $ to $p^{-1}(e) = X_{e}$. As above, it is simple to
	check that this is a natural transformation.
\end{proof}

Let $(X,S,p)$ be a supported action. Define $X_{e} = p^{-1}(e)$.
From now on supported actions of a semilattice $E$ will be termed a presheaf
over $E$ and denoted by $X = (X,E,p)$.

\begin{lem} \label[lemma]{lem: subpresheaf}
	Let $(X,E,p)$ be a presheaf. Let $Y \subseteq X$. Then,
		$Y$ is closed under the action of $E$ if and only if 
		$Y$ is an order-ideal in $X$.
\end{lem}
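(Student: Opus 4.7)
The plan is to prove the two implications separately, using results already established for general supported actions, since by \Cref{NoFunc} a presheaf over $E$ is a supported action of $E$.

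For the forward direction, if $Y$ is closed under the action of $E$, then $Y$ is by definition a subaction of $(X,E,p)$. Applying \Cref{subact ideal} directly yields that $Y$ is an order-ideal of $X$. So this direction is immediate.

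For the reverse direction, suppose $Y$ is an order-ideal of $X$ and let $y \in Y$ and $f \in E$. The key observation I want to verify is that $y \cdot f \leq y$ in the natural partial order on the supported action. Using (SA3), we have $p(y \cdot f) = f p(y) f = p(y) f$, since $E$ is a semilattice. Then using (SA1) and (SA2),
\[
y \cdot p(y \cdot f) = y \cdot (p(y) f) = (y \cdot p(y)) \cdot f = y \cdot f,
\]
so by the definition of the partial order, $y \cdot f \leq y$. Since $Y$ is an order-ideal and $y \in Y$, it follows that $y \cdot f \in Y$, proving $Y$ is closed under the action of $E$.

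I do not anticipate any significant obstacle; the content is essentially bookkeeping, with the only minor point being the verification that the action of a single semilattice element always produces an element below the original, which uses the semilattice axiom $fe = ef$ together with (SA1)--(SA3). Note that \Cref{subact ideal} already gives one direction for free, so the real work is only in the short calculation above.
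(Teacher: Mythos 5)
Your proof is correct and follows essentially the same route as the paper: one direction by \Cref{subact ideal}, the other by observing $y \cdot f \leq y$ and invoking the order-ideal property. The only difference is that you verify $y \cdot f \leq y$ by direct computation, whereas the paper leaves this implicit (it is immediate from \Cref{steinlem}, since $y\cdot f$ has the form $y\cdot e$ with $e\in E$).
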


\begin{proof}
	One implication is a consequence of \Cref{subact ideal}. We show the other implication.
Suppose $Y$ is an order-ideal of $X$.
Let $y \in Y$ and $e \in E$. Then, 
	$y \cdot e \leq y$ and so, by definition, $y \cdot e \in Y$ and $Y$ is closed under the action.
\end{proof}

Let $(X,E,p)$ be a presheaf and let $Y$ be an order-ideal of $X$.
By \Cref{lem: subpresheaf} the action $X \times E \to X$ restricts to an action $Y \times E \to Y$.
Denote the restriction of $p$ to $Y$ by $p|_{Y}\colon Y \to p(Y)$.
Observe that for the order-ideal $Y$ the image $p(Y)$ is a subsemilattice of $E$.
Let $y_1,y_2 \in Y$ and let $e = p(y_1)p(y_2)$. Put $y' = y_1 \cdot e$. Then,
$p(y') = p(y_1)e = e$ and $y' \in Y$ since $Y$ is an order-ideal.
Thus a \emph{subpresheaf} of $X$ consists of an order-ideal $Y \subseteq X$, a subsemilattice $F \subseteq E$ containing $p(Y)$,
and a map $q\colon Y \to F$ such that $q(y) = p(y)$.
Then, $(Y, F, q)$ is a presheaf which we denote by $Y$. Note that a subpresheaf is a subaction, as it is closed under the
action, however the semilattice is also restricted. 
A particularly important class of subpresheaves are the sets
$X \cdot e = \{x \cdot e: x \in X\}$ for all $e \in E(S)$.
Observe that 
\[
	X \cdot e = \{x \in X : p(x) \leq e\}.
\]
Hence, $X \cdot e$ is an order-ideal of $X$ and $p(X \cdot e) \subseteq e^{\downarrow}$.
We denote the restriction of $p$ to $X \cdot e$ by $p_{e}\colon X \cdot e \to e^{\downarrow}$ and call
each of the subpresheaves $(X \cdot e, e^{\downarrow}, p_{e})$ a \emph{principal subpresheaf of $X$}.
Note that if $X$ has global support then all the principal subpresheaves of $X$ have global support.
We will usually denote a subpresheaf $(Y,F,q)$ by $(Y,F)$.
\begin{rem}
	The usual definition of a subpresheaf of a presheaf $(X_{e}, \phi^{e}_{f})$ over $E$, as given in \cite{sheaves}, is 
	a presheaf  $(Y_{e}, \varphi^{e}_{f})$ such that
	 $Y_{e} \subseteq X_{e}$ for each $e \in E$ and
 for any $f \leq e$ then $\varphi^{f}_{e}$ is the restriction of $\phi^{f}_{e}$ to $Y_{e}$.
By \Cref{NoFunc} we see that this notion is equivalent to our order-theoretic one.
\end{rem}

We are interested in the isomorphisms between all subpresheaves and wish to prove that they form an inverse semigroup.

\begin{lem}
	Let $(X,E,p)$ be a presheaf.
	Let $(Y,F)$ and $(Y',F')$ be two subpresheaves of $X$ and
	let $\alpha\colon Y \to Y'$ and $\theta\colon F \to F'$ be a pair functions.
	Then the following are equivalent:
	\begin{enumerate}[1.]
		\item 
		$(\alpha, \theta)$ is an isomorphism of supported actions,
		\item 
		$\alpha$ and $\theta$ are order-isomorphisms such that $p(\alpha(y)) = \theta(p(y))$ for all $y \in Y$.
	\end{enumerate}
\end{lem}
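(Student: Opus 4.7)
The plan is to unfold what ``isomorphism of supported actions'' means and to use \Cref{steinlem} to shuttle between order inequalities ($y_1\leq y_2$) and action equations ($y_1=y_2\cdot e$).

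By definition, $(\alpha,\theta)$ is an isomorphism of supported actions when both $\alpha$ and $\theta$ are bijective, $\theta$ is a semilattice homomorphism, the action is respected ($\alpha(y\cdot e)=\alpha(y)\cdot\theta(e)$), and $p\circ\alpha=\theta\circ p$. For $(1)\Rightarrow(2)$, since $\theta$ is a semilattice isomorphism it is automatically an order-isomorphism. To show $\alpha$ is an order-isomorphism, take $y_1\leq y_2$ in $Y$, so by \Cref{steinlem} we may write $y_1=y_2\cdot e$ for some $e\in F$; applying $\alpha$ gives $\alpha(y_1)=\alpha(y_2)\cdot\theta(e)$, whence $\alpha(y_1)\leq\alpha(y_2)$ by \Cref{steinlem} again. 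The same argument applied to $(\alpha^{-1},\theta^{-1})$ shows $\alpha^{-1}$ is order-preserving, so $\alpha$ is an order-isomorphism; the equation $p\circ\alpha=\theta\circ p$ is part of the definition.

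For $(2)\Rightarrow(1)$, first observe that an order-isomorphism between meet-semilattices is automatically a semilattice isomorphism: $\theta(e_1e_2)\leq\theta(e_1),\theta(e_2)$ gives $\theta(e_1e_2)\leq\theta(e_1)\theta(e_2)$, and the reverse inequality follows by the same argument applied to $\theta^{-1}$. It remains to verify the action compatibility $\alpha(y\cdot e)=\alpha(y)\cdot\theta(e)$ for $y\in Y$ and $e\in F$. Since $y\cdot e\leq y$, the order-preservation of $\alpha$ yields $\alpha(y\cdot e)\leq\alpha(y)$, and by \Cref{steinlem} this forces
\[
\alpha(y\cdot e)=\alpha(y)\cdot p(\alpha(y\cdot e)).
\]
Using the compatibility of $p$ with $\alpha$ and $\theta$, together with the fact that $\theta$ is a semilattice homomorphism, we compute
\[
p(\alpha(y\cdot e))=\theta(p(y)\,e)=\theta(p(y))\,\theta(e)=p(\alpha(y))\,\theta(e),
\]
so $\alpha(y\cdot e)=\alpha(y)\cdot p(\alpha(y))\theta(e)=\alpha(y)\cdot\theta(e)$ by (SA1) and (SA2).

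The argument is essentially a routine dictionary between the two formulations, and I don't expect a serious obstacle; the one subtlety worth highlighting is the use of \Cref{steinlem} in both directions to translate between the partial order on the presheaf and the action of an idempotent, which is what makes $p(\alpha(y))=\theta(p(y))$ alone suffice to recover full action-compatibility.
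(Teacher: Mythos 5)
Your proof is correct and follows essentially the same route as the paper's: both directions translate between the partial order and the action via the identity $x\leq y\iff x=y\cdot p(x)$, and the key computation in $(2)\Rightarrow(1)$, namely $\alpha(y\cdot e)=\alpha(y)\cdot p(\alpha(y\cdot e))=\alpha(y)\cdot\theta(p(y))\theta(e)=\alpha(y)\cdot\theta(e)$, is exactly the one the paper uses. The only cosmetic differences are that you spell out why an order-isomorphism of semilattices is a semilattice isomorphism (the paper takes this as known) and you invoke \Cref{steinlem} where the paper uses the definition $x=y\cdot p(x)$ directly.
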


\begin{proof}
	Both statements imply that $\alpha$ and $\theta $ are bijections. By definition $\theta $ is an order-isomorphism
	if and only if $\theta $ is an isomorphism.
	Suppose that $(\alpha, \theta) $ is an isomorphism of supported actions. By definition
	$p(\alpha(y)) = \theta(p(y))$ for all $y \in Y$.
	Let $x \leq y $ in $Y$. Then,
	\[
		\alpha(x) = \alpha(y \cdot p(x)) = \alpha(y) \cdot \theta(p(x)) = \alpha(y) \cdot p(\alpha(x))
	\]
	which is the definition of $\alpha(x) \leq \alpha(y)$.
	Conversely,
	suppose that $\alpha$ is order-preserving and
	let $y \in Y$ and $e \in F$. Then, $y \cdot e \leq y$. Hence,
	$\alpha(y \cdot e) \leq \alpha(y)$. By definition of the partial order and the assumption $p(\alpha(y)) = \theta(p(y))$
	\[
		\alpha(y \cdot e) = \alpha(y) \cdot p(\alpha(y \cdot e)) = \alpha(y) \cdot \theta(p(y \cdot e)) =
		\alpha(y) \cdot \theta(p(y)) \theta(e) = \alpha(y) \cdot \theta(e)
	.\]
\end{proof}

Let $(X,E,p)$ be a presheaf.
Define the \emph{partial automorphisms} of $X$ to be the set of isomorphisms between
subpresheaves of $X$. 
The partial automorphisms of $X$ then forms a groupoid,
hence we can use the Ehresmann-Schein-Nambooripad Theorem to prove
that it is an inverse semigroup. We recall the following theorem, further details can be found in \cite[Chap. 4]{LawsonBook}.
\begin{thm}[{\cite[Theorem 4.1.8]{LawsonBook}}] \label{ESN}
	The category of inverse semigroups and homomorphisms is isomorphic to the category of inductive groupoids and inductive
	functors.
\end{thm}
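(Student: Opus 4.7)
Since the final statement is the classical Ehresmann--Schein--Nambooripad theorem, quoted verbatim from \cite[Theorem 4.1.8]{LawsonBook}, my plan is to outline the standard strategy: construct mutually inverse assignments between the two categories and verify they extend to inverse functors. The cited reference carries out all the details, so a plan here is really an indication of how the equivalence is set up.

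First I would associate to each inverse semigroup $S$ an inductive groupoid $G(S)$ whose objects are the idempotents $E(S)$, whose morphisms from $e$ to $f$ are the elements $s \in S$ with $\mathbf{d}(s) = e$ and $\mathbf{r}(s) = f$, and whose partial composition is the restricted semigroup product (defined exactly when $\mathbf{d}(s) = \mathbf{r}(t)$). The natural partial order on $S$ supplies the ordered structure, with restrictions $s|_{e} = se$ for $e \leq \mathbf{d}(s)$ and corestrictions $e|s = es$ for $e \leq \mathbf{r}(s)$; the semilattice meet on $E(S)$ is just multiplication. One checks that an inverse semigroup homomorphism preserves $\mathbf{d}, \mathbf{r}$, idempotents and the natural order, so it yields an inductive functor.

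Conversely, given an inductive groupoid $(G,\leq)$ with object semilattice $(E,\wedge)$, I would define a total binary operation on the morphisms of $G$ by the pseudo-product
\[
s \ast t = \bigl(s|_{\mathbf{d}(s)\wedge \mathbf{r}(t)}\bigr) \cdot \bigl({}_{\mathbf{d}(s)\wedge \mathbf{r}(t)}|t\bigr),
\]
where $\cdot$ is the (now always composable) groupoid product, and take the semigroup inverse to be the groupoid inverse. After verifying $(G,\ast)$ is an inverse semigroup $S(G)$ whose natural partial order recovers $\leq$, and that inductive functors translate to semigroup homomorphisms, one obtains the equivalence by showing the natural isomorphisms $G(S(G)) \cong G$ and $S(G(S)) \cong S$.

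The main obstacle is the associativity of $\ast$: the computation hinges on careful bookkeeping of how restrictions interact with the meets in the semilattice of objects, using the compatibility axioms of an inductive groupoid that relate restriction to composition. Once associativity is established, the remaining verifications are essentially formal, and the full proof is given in \cite[Chap.~4]{LawsonBook}.
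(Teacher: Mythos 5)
The paper offers no proof of this statement; it is quoted verbatim from \cite[Theorem 4.1.8]{LawsonBook} and used as a black box, and your outline is exactly the standard construction carried out there (pseudo-product via restriction/corestriction on one side, the trace groupoid with the natural partial order on the other). The only point worth tightening is that the theorem asserts an \emph{isomorphism} of categories, not merely an equivalence: in the standard argument the underlying sets coincide, so $S(G(S)) = S$ and $G(S(G)) = G$ hold on the nose rather than up to natural isomorphism, which is what your construction in fact delivers.
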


We now prove that the set of partial automorphisms of a presheaf forms an inductive groupoid and then apply the previous theorem
to show it forms an inverse semigroup.

\begin{lem} \label[lemma]{partial auto}
	The set of partial automorphisms of a presheaf forms an inverse semigroup.
\end{lem}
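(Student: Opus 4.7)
The plan is to invoke the Ehresmann-Schein-Nambooripad theorem (Theorem~\ref{ESN}) cited immediately above the lemma. So I need to equip the set of partial automorphisms of $X=(X,E,p)$ with the structure of an inductive groupoid, and then the inverse semigroup structure comes for free.

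First I would verify the groupoid structure. A partial automorphism is a pair $(\alpha,\theta)\colon (Y,F)\to (Y',F')$ of order-isomorphisms satisfying $p(\alpha(y))=\theta(p(y))$, which the preceding lemma characterises as an isomorphism of subpresheaves. Composition is componentwise: $(\beta,\psi)(\alpha,\theta)=(\beta\alpha,\psi\theta)$, defined exactly when the target subpresheaf of the first equals the source subpresheaf of the second. The identity at the subpresheaf $(Y,F)$ is $(\id_{Y},\id_{F})$, and the inverse of $(\alpha,\theta)$ is $(\alpha^{-1},\theta^{-1})$, which is again a partial automorphism because order-isomorphisms and semilattice isomorphisms invert to the same. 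All the groupoid axioms are then immediate from the corresponding properties of function composition.

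Next I would define the partial order: $(\alpha_1,\theta_1)\leq(\alpha_2,\theta_2)$ if the source of the first is a subpresheaf of the source of the second, the same holds for the targets, and $\alpha_1,\theta_1$ are the restrictions of $\alpha_2,\theta_2$. Restricted to identities, this is the order in which $(\id_Y,\id_F)\leq(\id_{Y'},\id_{F'})$ exactly when $Y\subseteq Y'$ and $F\subseteq F'$; so identifying identities with subpresheaves, the idempotents of the groupoid are the poset of subpresheaves of $X$. I would then check that this poset is a meet-semilattice: given two subpresheaves $(Y_1,F_1)$ and $(Y_2,F_2)$, the candidate meet is $(Y_1\cap Y_2,F_1\cap F_2)$, where the intersection of order-ideals is an order-ideal, the intersection of subsemilattices is a subsemilattice, and the condition $p(Y_1\cap Y_2)\subseteq F_1\cap F_2$ holds because $p(Y_1\cap Y_2)\subseteq p(Y_i)\subseteq F_i$. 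The restriction of $p$ to $Y_1\cap Y_2$ gives the required structure map, so the meet is indeed a subpresheaf.

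Finally I would verify the ordered-groupoid axioms: the order is compatible with inversion and with composition (both are routine, since restriction commutes with these operations), and for every partial automorphism $(\alpha,\theta)\colon(Y,F)\to(Y',F')$ and every subpresheaf $(Z,G)\leq (Y,F)$ there is a unique restriction $(\alpha|_{Z},\theta|_{G})\colon (Z,G)\to(\alpha(Z),\theta(G))$; here I need to check that $(\alpha(Z),\theta(G))$ really is a subpresheaf of $(Y',F')$, which follows because $\alpha$ is an order-isomorphism (hence sends order-ideals to order-ideals), $\theta$ is a semilattice homomorphism, and $p\alpha=\theta p$ guarantees $p(\alpha(Z))\subseteq\theta(G)$. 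With an analogous corestriction available via the inverse, the ordered groupoid is inductive and Theorem~\ref{ESN} yields the inverse semigroup structure. The main subtle point is keeping the two layers of the subpresheaf data $(Y,F)$ straight—in particular remembering that $F$ need not coincide with $p(Y)$—so that meets and restrictions are computed on both layers simultaneously; once that bookkeeping is in place the verifications are all routine.
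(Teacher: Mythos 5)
Your proposal is correct and follows essentially the same route as the paper: both arguments realise the partial automorphisms as an inductive groupoid (ordered by restriction, with identities the subpresheaves ordered by componentwise inclusion and meets given by componentwise intersection) and then invoke the Ehresmann--Schein--Nambooripad theorem. The only difference is that you spell out the routine groupoid and order-compatibility verifications that the paper leaves implicit; the two substantive checks — that the restriction $(\alpha|_{Z},\theta|_{G})$ lands in a genuine subpresheaf via $p(\alpha(Z))=\theta(p(Z))\subseteq\theta(G)$, and that intersections of subpresheaves are subpresheaves — appear in both.
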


\begin{proof}
Let $(X,E,p)$ be a presheaf. The subpresheaves of $X$ are partially ordered with respect to inclusion,  $ (Z,H) \leq (Y,F) $ if and only if $Z\subseteq Y$ and $ H\subseteq F$. This induces 
a partial order on the 
groupoid of partial automorphisms of $(X,E,p)$
given by $(\alpha, \theta) \leq (\alpha', \theta')$ whenever
the domain and range of $(\alpha, \theta)$ are contained in the domain and range of $(\alpha', \theta') $.
Let $(\alpha, \theta)\colon (Y,F) \to (Y',F')$ be a partial automorphism and $(Z,H) \leq (Y,F)$.
Then, since $\alpha$ and $\theta$ are order-isomorphisms, $\alpha(Z)$ is an order-ideal and $\theta(H)$ is a subsemilattice.
Since $p(\alpha(y)) = \theta(p(y))$ for all $y \in Y$
then 
$p(\alpha(Z)) = \theta(p(Z)) \subseteq \theta(H)$.
Hence, the restriction to $(\alpha|_{Z}, \theta|_{H}): (Z,H) \to (\alpha(Z), \theta(H))$ is a partial automorphism of $X$. We have proved that
the set of partial automorphisms forms an ordered groupoid. 
Since the intersection of order-ideals is an order-ideal it follows that for any two subpresheaves  $(Z,H) $ and $(Y,F)$ their intersection $(Z \cap Y, H \cap F)$ is a subpresheaf and 
the set of identities forms a semilattice. Therefore,
the set of partial automorphisms forms an inductive groupoid and so,
by \Cref{ESN},  an inverse semigroup.
\end{proof}

\section{A generalisation of the Munn semigroup}
We shall now generalise the classical Munn semigroup $T_{E}$ by replacing the semilattice $E$ by a presheaf $X$ over $E$. To do this instead of taking the inverse semigroup of order-isomorphisms
between principal order-ideals, we take the inverse semigroup of isomorphisms between principal subpresheaves. Recall that for a presheaf $(X,E,p)$ the principal subpresheaves are of the form $(X \cdot e, e^{\downarrow},
p_{e})$ where $X \cdot e = \{x \in X: p(x) \leq e\}$ and $p_{e} = p|_{X\cdot e}$, for each $e \in E$. An isomorphism 
between two principal subpresheaves of $X$ is a pair $(\alpha, \theta)$ consisting of order-isomorphisms such that 
the following diagram commutes
\begin{equation} \label{prince}
	\begin{tikzcd}
		X \cdot e \ar[d, "p_{e}"'] \ar[r, "\alpha"] & X \cdot f \ar[d, "p_{f}" ] \\
		e^{\downarrow} \ar[r, "\theta"] & f^{\downarrow}
	\end{tikzcd}
\end{equation}
Define $T_{X}$ to be the set of isomorphisms between the principal subpresheaves of 
$(X,E,p)$.

\begin{thm} \label{prince algebra}
	Let $(X,E,p)$ be a presheaf. Then, $T_{X}$ is an inverse semigroup.
\end{thm}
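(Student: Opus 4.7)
The plan is to exhibit $T_{X}$ as an inverse subsemigroup of the inverse semigroup of all partial automorphisms of $(X,E,p)$ provided by \Cref{partial auto}, so that the conclusion is inherited for free. Since the ambient set-up is already an inverse semigroup, I only need to verify closure under composition and under inversion.

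Closure under inversion is immediate. If $(\alpha,\theta)$ is an isomorphism $(X\cdot e, e^{\downarrow})\to(X\cdot f, f^{\downarrow})$, then $(\alpha^{-1},\theta^{-1})$ is an isomorphism in the opposite direction, and both its source and target are still principal subpresheaves.

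Closure under composition is the substantive step. The key combinatorial fact is that principal subpresheaves are closed under intersection: $(X\cdot f)\cap(X\cdot g)=X\cdot(fg)$ and $f^{\downarrow}\cap g^{\downarrow}=(fg)^{\downarrow}$. Now suppose $(\alpha,\theta)\colon (X\cdot e, e^{\downarrow})\to(X\cdot f, f^{\downarrow})$ and $(\beta,\phi)\colon (X\cdot g, g^{\downarrow})\to(X\cdot h, h^{\downarrow})$ lie in $T_{X}$. Their product in the inverse semigroup of \Cref{partial auto} is computed by first restricting $\alpha$ to the preimage $\alpha^{-1}\bigl((X\cdot f)\cap(X\cdot g)\bigr)=\alpha^{-1}(X\cdot(fg))$ and then post-composing with $\beta$. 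Using that diagram~(\ref{prince}) forces $p\circ\alpha=\theta\circ p$, I obtain
\[
\alpha^{-1}(X\cdot(fg))=\{x\in X\cdot e:\theta(p(x))\leq fg\}=X\cdot\theta^{-1}(fg),
\]
which is again a principal subpresheaf. A symmetric argument on the other side shows that the range of the composite is $X\cdot\phi(fg)$. Hence the composite is an isomorphism between two principal subpresheaves and lies in $T_{X}$.

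Combining the two closure properties, $T_{X}$ is an inverse subsemigroup of the inverse semigroup from \Cref{partial auto}, and is therefore itself an inverse semigroup. The only non-trivial bookkeeping is the pullback calculation showing that the preimage of a principal subpresheaf under a member of $T_{X}$ remains principal; this is the single point at which the commutativity of diagram~(\ref{prince}) is genuinely used, and everything else reduces to unpacking definitions.
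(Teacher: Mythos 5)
Your proposal is correct and follows essentially the same route as the paper: both reduce to \Cref{partial auto} and rest on the same two facts, namely that images/preimages of principal subpresheaves under elements of $T_{X}$ are again principal (via $p\alpha=\theta p$ and $\theta$ being an order-isomorphism) and that principal subpresheaves are closed under intersection, $(X\cdot e)\cap(X\cdot f)=X\cdot ef$. The only cosmetic difference is that the paper phrases the closure conditions as the inductive-subgroupoid axioms (compatibility with restriction plus a semilattice of identities) before invoking the ESN theorem, whereas you verify closure under the semigroup product and inversion directly; the underlying computations are the same.
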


\begin{proof}
	We prove that $T_{X}$ is an inverse subsemigroup of the partial automorphisms of $X$. By \Cref{partial auto}
	it is enough to show that $T_{X}$ is compatible with restriction and that the identities from a 
	semilattice.
	Let $(\alpha, \theta)\colon (X \cdot e, e^{\downarrow}) \to (X \cdot f, f^{\downarrow})$ be in $T_{X}$ and $(X \cdot h, h^{\downarrow}) \leq (X \cdot e, f^{\downarrow})$.
	This implies that $h \leq e$ so, as $\theta $ is order-preserving, $\theta(h) \leq \theta(e)=f$.
	Since $(\alpha, \theta)$ is
	an isomorphism of supported actions then $\alpha(x \cdot h) = \alpha(x) \cdot \theta(h)$ for all $x \in X \cdot e$ and
	hence
	\[
		\alpha(X \cdot h) = \alpha(X \cdot eh) = \alpha(X \cdot e) \cdot \theta(h) = X \cdot f \theta(h) = X \cdot \theta(h)
	.\]
	So every element of $T_{X}$ can be restricted to a smaller principal subpresheaf. It is now enough to show that
	the identities form a semilattice. Let $(X \cdot e, e^{\downarrow})$ and $(X \cdot f, f^{\downarrow})$ be two
	principal subpresheaves, we prove they are closed under intersections
	\[
		(X \cdot e, e^{\downarrow})  \cap (X \cdot f, f^{\downarrow}) = (X \cdot ef, (ef)^{\downarrow}).
	\]
	Observe that $(ef)^{\downarrow} = e^{\downarrow } \cap  f^{\downarrow}$. We show that $X \cdot ef = X \cdot e \cap X\cdot f$.
 Let $x \in X \cdot ef$. Then,  
$p(x) \leq ef$ so $p(x) \leq e$ and $p(x) \leq f$. Hence, $x \in X \cdot e$ and $x \in X \cdot f$.
Conversely, let $x \in X \cdot e \cap X \cdot f$. Then, $p(x) \leq p(x)p(x) \leq ef$ and therefore
$X \cdot e \cap X \cdot f = X \cdot ef$.
	 Therefore, $T_{X}$ is an inductive groupoid and hence an inverse semigroup.
\end{proof}
We call $T_{X}$ the \emph{generalised Munn semigroup} of $(X,E,p)$. 
Denote elements of $T_{X}$ by pairs $(\alpha, \theta_{\alpha })$.

\begin{lem} \label[lemma]{lem: theta}
	Let $(X,E,p)$ be a presheaf. Then:
	\begin{enumerate}
		\item 
	The projection $\gamma\colon T_{X} \to T_{E}$ defined by $(\alpha, \theta_{\alpha}) \mapsto \theta_{\alpha}$
	is an idempotent-separating homomorphism and in
	particular 
	$E(T_{X}) \cong E$.
	\item 
If $p$ is surjective
the projection $T_{X} \to \mathcal{I}(X) 
$ defined by $(\alpha, \theta) \mapsto \alpha$ is injective.
		
	\end{enumerate}
\end{lem}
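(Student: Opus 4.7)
For part (1), the plan is first to note that membership of $(\alpha,\theta_{\alpha})\in T_{X}$ in particular requires $\theta_{\alpha}$ to be an order-isomorphism $e^{\downarrow}\to f^{\downarrow}$, so $\theta_{\alpha}$ is already an element of $T_{E}$; hence $\gamma$ is well-defined. To see that $\gamma$ is a homomorphism, recall from \Cref{prince algebra} and \Cref{partial auto} that the product in $T_{X}$ is the pseudoproduct inherited from the inductive groupoid of partial automorphisms: two elements are first restricted to the largest common principal subpresheaf and then composed. The commutative square (\ref{prince}) forces the restriction on the $\alpha$-component and the restriction on the $\theta_{\alpha}$-component to sit over one another, so restriction followed by composition commutes with projection onto the second coordinate. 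Thus $\gamma(s_{1}s_{2})=\gamma(s_{1})\gamma(s_{2})$. For the idempotent-separating assertion, the idempotents of $T_{X}$ are exactly the identities $(\id_{X\cdot e},\id_{e^{\downarrow}})$ for $e\in E$, which $\gamma$ sends to the distinct idempotents $\id_{e^{\downarrow}}$ of $T_{E}$; so two idempotents of $T_{X}$ with the same image under $\gamma$ coincide. The identification $E(T_{X})\cong E$ then follows immediately, with $e\mapsto(\id_{X\cdot e},\id_{e^{\downarrow}})$ a semilattice isomorphism since the meet formula $(X\cdot e,e^{\downarrow})\cap(X\cdot f,f^{\downarrow})=(X\cdot ef,(ef)^{\downarrow})$ from the proof of \Cref{prince algebra} exhibits the product of idempotents.

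For part (2), suppose $(\alpha,\theta)$ and $(\alpha',\theta')$ in $T_{X}$ have equal first coordinates. As partial functions on $X$ they must then share domain and image, i.e.\ $X\cdot e=X\cdot e'$ and $X\cdot f=X\cdot f'$. I would next exploit surjectivity of $p$ to show that $p(X\cdot e)=e^{\downarrow}$: given any $e''\leq e$, pick $x\in X$ with $p(x)=e''$; then $x\in X\cdot e$ and $e''\in p(X\cdot e)$, while the reverse inclusion is the definition of $X\cdot e$. Applying $p$ to $X\cdot e=X\cdot e'$ yields $e^{\downarrow}=e'^{\downarrow}$, whence $e=e'$; likewise $f=f'$. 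Finally, the commutative diagram (\ref{prince}) gives $\theta(p(x))=p(\alpha(x))=\theta'(p(x))$ for every $x\in X\cdot e$, and since $p\colon X\cdot e\to e^{\downarrow}$ is surjective this forces $\theta=\theta'$ on all of $e^{\downarrow}$.

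Neither part is genuinely deep; the only obstacle worth flagging is the book-keeping in part (1) for the homomorphism property, where one must unwind that the pseudoproduct in $T_{X}$ really does decompose as coordinate-wise restriction followed by coordinate-wise composition in the product presheaf structure. Once one writes down the restriction maps made available by (\ref{prince}), this becomes a direct check, and part (2) is then a short consequence of the surjectivity of $p$ combined with the same commutative square.
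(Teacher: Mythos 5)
Your proposal is correct and follows essentially the same route as the paper: part (1) by observing that the pseudoproduct in $T_{X}$ is computed coordinate-wise so that projection onto the second coordinate is a homomorphism sending the distinct idempotents $(\id_{X\cdot e},\id_{e^{\downarrow}})$ to the distinct $\id_{e^{\downarrow}}$, and part (2) by using surjectivity of $p_{e}$ together with the commutative square (\ref{prince}) to recover $\theta$ from $\alpha$ via $\theta(h)=p_{f}(\alpha(x))$ for any $x$ with $p_{e}(x)=h$. Your extra step in part (2) recovering $e$ from $X\cdot e$ (so that the domains of $\theta$ and $\theta'$ agree) is a detail the paper leaves implicit, but it is the same argument.
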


\begin{proof}
\qquad

	\begin{enumerate}
\item 
Let $(\alpha,\theta_{\alpha}), (\beta, \theta_{\beta}) \in T_{X}$. Then,
\[
	(\alpha,\theta_{\alpha})(\beta, \theta_{\beta}) = (\alpha \beta, \theta_{\alpha } \theta_{\beta})
.\]
Hence, $\theta_{\alpha }\theta_{\beta } = \theta_{\alpha \beta}$ and $\gamma $ is a homomorphism.
The idempotents of $T_{X}$ are
$(\id_{X \cdot e}, \id_{e^{\downarrow}}) \in T_{X}$ for each $e \in E$. Since the idempotents of $T_{E}$ are
$\id_{e^{\downarrow}}$ for each $e \in E$ then $\gamma $ is idempotent-separating and $\gamma(E(T_{X})) = E(T_{E}) \cong E$.
\item 
Let $(\alpha, \theta)\colon (X \cdot e, e^{\downarrow}) \to (X \cdot f, f^{\downarrow})$ as in in diagram (\ref{prince}).
Let $h \leq e$ then, since $p$ is surjective,  $p_{e}$ is surjective and
there exists an $x \in X \cdot e$ such that $p_{e}(x) = h$ and
$\theta(h) = p_{f}(\alpha(x))$.
Therefore, if $(\alpha, \theta)$ exists then
 $\theta$ is completely determined by $\alpha$ and the projection is injective.

	\end{enumerate}
\end{proof}

 We check that $T_{X}$ really is a generalisation of the 
Munn semigroup.

\begin{prop} \label[proposition]{Munn action}
	Let $E$ be a semilattice. Put $X = E$ and define $p\colon X \to  E$ to be the identity map. Then,
	$X$ is a presheaf over $E$ and $T_{X} \cong T_{E}$.
\end{prop}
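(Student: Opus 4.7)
The plan is to verify that $(E,E,\id_{E})$ really is a presheaf, identify its principal subpresheaves explicitly, and then observe that the commutativity of diagram~(\ref{prince}) forces the two components of any element of $T_{X}$ to coincide, so $T_{X}$ can be identified with $T_{E}$ via the projection $\gamma$ of \Cref{lem: theta}.

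First I would check the three action axioms for the candidate action $e \cdot f \coloneqq ef$: (SA1) is associativity of the meet, (SA2) follows from idempotency ($e \cdot p(e) = ee = e$), and (SA3) reduces to $ef = f^{-1}ef$, which holds because $f \in E(E) = E$ commutes with $e$ and equals its own inverse. So $(E, E, \id_{E})$ is a supported action of $E$ on itself, hence a presheaf by \Cref{NoFunc}.

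Next I would identify the principal subpresheaves. By definition $X \cdot e = \{x \in E : p(x) \leq e\} = e^{\downarrow}$ and $p_{e} = \id_{e^{\downarrow}}$. So a typical element $(\alpha, \theta) \in T_{X}$ consists of order-isomorphisms $\alpha\colon e^{\downarrow} \to f^{\downarrow}$ and $\theta\colon e^{\downarrow} \to f^{\downarrow}$ making diagram~(\ref{prince}) commute. Since both vertical maps are identities, the diagram collapses to $\alpha = \theta$.

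The main point is then essentially automatic: the projection $\gamma\colon T_{X} \to T_{E}$ from \Cref{lem: theta}(1) sends $(\alpha, \theta_{\alpha}) \mapsto \theta_{\alpha}$, and since $p = \id_{E}$ is surjective, part~(2) of that lemma gives that $(\alpha,\theta) \mapsto \alpha$ is injective; combined with $\alpha = \theta$ this shows $\gamma$ is injective. For surjectivity, given any order-isomorphism $\theta\colon e^{\downarrow} \to f^{\downarrow}$ in $T_{E}$ the pair $(\theta, \theta)$ lies in $T_{X}$ because the required square has identity verticals and therefore commutes trivially. Hence $\gamma$ is a bijective homomorphism and $T_{X} \cong T_{E}$. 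There is no real obstacle here; the only thing to be a little careful about is noting that the diagram~(\ref{prince}) with $p_{e} = p_{f} = \id$ does force $\alpha = \theta$, so that the embedding $\gamma$ is automatically onto $T_{E}$.
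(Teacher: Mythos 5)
Your proposal is correct and follows essentially the same route as the paper: both arguments hinge on the observation that, since $p=\id_{E}$, the principal subpresheaves are the $e^{\downarrow}$ and the commutativity condition $p(\alpha(y))=\theta(p(y))$ collapses to $\alpha=\theta$, identifying $T_{X}$ with $\{(\theta,\theta):\theta\in T_{E}\}\cong T_{E}$. Your extra packaging via the projection $\gamma$ of \Cref{lem: theta} and the explicit check of the action axioms are just more detailed versions of steps the paper leaves as ``a simple check.''
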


\begin{proof}
	It is a simple check that $E$ acting on itself by right multiplication makes $(X,E,p)$ a globally supported 
	action. Note that the principal subpresheaves of $X$ are the order-ideals $E \cdot e = e^{\downarrow}$ for
	all $e \in E$.
Suppose there is an element $(\alpha, \theta_{\alpha }) \in T_{X }$ where 
$\alpha\colon  e^{\downarrow} \to  f^{\downarrow}$ and $\theta_{\alpha }\colon e^{\downarrow } \to  f^{\downarrow}$. Then,
for any $y \in  e^{\downarrow}$,
\[
\alpha(y) = p(\alpha(y)) = \theta_{\alpha}(p(y)) = \theta_{\alpha}(y)
.\] Therefore, 
\[
	T_{X} = \{ (\theta: e^{\downarrow} \to f^{\downarrow}, \theta: e^{\downarrow} \to f^{\downarrow}) : \theta \in T_{E}\} \cong T_{E}
.\]
\end{proof}

We generalise the Munn representation theorem such that every supported action of an inverse semigroup gives rise to a representation
on to the generalised Munn semigroup.

\begin{thm}[Generalised Munn representation] \label{Thm: General Munn}
	Let $(X,S,p)$ be a supported action and let $Y=(X,E(S),p)$ be the presheaf obtained by restricting the action $(X,S,p)$ 
	to the idempotents.
	Then, there is an idempotent-separating 
	representation $\xi\colon S \to T_{Y}$
	 whose image is a wide inverse subsemigroup of $T_{Y}$.
\end{thm}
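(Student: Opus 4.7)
The plan is to send each $s \in S$ to the pair $\xi(s) = (\alpha_s, \theta_s)$, where $\theta_s \colon \mathbf{d}(s)^{\downarrow} \to \mathbf{r}(s)^{\downarrow}$ is the classical Munn map $\theta_s(e) = ses^{-1}$, and $\alpha_s \colon X \cdot \mathbf{d}(s) \to X \cdot \mathbf{r}(s)$ is given by $\alpha_s(x) = x \cdot s^{-1}$. The appearance of $s^{-1}$ (rather than $s$) is dictated by the composition convention $(\alpha\beta)(x) = \alpha(\beta(x))$ used throughout the paper: under this convention the assignment $s \mapsto (x \mapsto x \cdot s)$ is an antihomomorphism, and composing with the involution $s \mapsto s^{-1}$ is what produces a genuine homomorphism. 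This is also consistent with the specialisation to the canonical right action of $S$ on $E(S)$ from \Cref{clifford subaction}, under which the classical Munn map of \Cref{classical munn} reads $\delta(s)(e) = e \cdot s^{-1}$.

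First I verify that $\xi(s) \in T_Y$. Axiom (SA3) gives $p(\alpha_s(x)) = sp(x)s^{-1}$, which lies in $\mathbf{r}(s)^{\downarrow}$ whenever $p(x) \leq \mathbf{d}(s)$ and agrees with $\theta_s(p(x))$, so diagram (\ref{prince}) commutes. Axioms (SA1) and (SA2), together with the elementary fact that $x \cdot e = x$ whenever $p(x) \leq e$, show that $y \mapsto y \cdot s$ is a two-sided inverse for $\alpha_s$. Combined with the classical verification that $\theta_s$ is an order-isomorphism of principal ideals of $E(S)$, this yields $(\alpha_s, \theta_s) \in T_Y$.

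Next, $\xi$ is a homomorphism by direct computation: using (SA1),
\[
	\alpha_s(\alpha_t(x)) = (x \cdot t^{-1}) \cdot s^{-1} = x \cdot (st)^{-1} = \alpha_{st}(x),
\]
while $\theta_s \theta_t = \theta_{st}$ follows from the identity $(st)e(st)^{-1} = s(tet^{-1})s^{-1}$ used in the classical Munn representation. The domain of $\xi(s)\xi(t)$ in $T_Y$ is, by construction, the largest principal subpresheaf on which the composition is defined, and this coincides with $X \cdot \mathbf{d}(st)$ exactly as in the proof of \Cref{classical munn}. Specialising to $e \in E(S)$ collapses both $\alpha_e$ and $\theta_e$ to the identities on $X \cdot e$ and $e^{\downarrow}$ respectively, so $\xi(e)$ is precisely the idempotent of $T_Y$ corresponding to $e$ under the bijection $E(T_Y) \cong E(S)$ of \Cref{lem: theta}. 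Hence $\xi$ separates idempotents, and its image is an inverse subsemigroup of $T_Y$ containing every element of $E(T_Y)$; that is, a wide inverse subsemigroup.

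The main obstacle I anticipate is getting the direction and sign conventions right at the start: with $\alpha_s(x) = x \cdot s$ one instead obtains an antihomomorphism sending $X \cdot \mathbf{r}(s)$ into $X \cdot \mathbf{d}(s)$, mismatched with the classical Munn direction. Once the definition $\alpha_s(x) = x \cdot s^{-1}$ is in place, every remaining check reduces to routine manipulation of (SA1)--(SA3) together with the classical Munn calculations.
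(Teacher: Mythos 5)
Your proposal is correct and takes essentially the same route as the paper: the paper defines $\alpha_s(y) = y\cdot s$ on $X\cdot\mathbf{r}(s)$ and then sets $\xi(s) = (\alpha_{s^{-1}},\theta_{s^{-1}})$, which is literally your map $x\mapsto x\cdot s^{-1}$ paired with $\theta(e)=ses^{-1}$, and your domain computation, idempotent identification and wideness argument all match the paper's. The one point you leave implicit is that $\alpha_s$ must be an order-isomorphism (not merely a bijection making diagram~(\ref{prince}) commute) to lie in $T_Y$; the paper supplies this from \Cref{steinlem}, the compatibility of the partial order with the action.
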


\begin{proof}
	 For each $s \in S$ we define the function $\alpha_{s}\colon X \cdot \mathbf{r}(s) \to X \cdot \mathbf{d}(s)$ by
	 $\alpha_{s}(y) = y \cdot s$ . Since
	$p(y \cdot s) = s^{-1}p(y)s$ then $p(y \cdot s) \leq \mathbf{d}(s)$ and the function is well-defined.
	 For $y \in X \cdot \mathbf{r}(s)$, since
	$p(y) \leq \mathbf{r}(s)$, then
	\[
		(y \cdot s) \cdot s^{-1} = y \cdot ss^{-1} = y \cdot p(y)ss^{-1} = y \cdot p(y) = y.
	\]
	Hence there is a bijection $X \cdot \mathbf{r}(s) \to X \cdot \mathbf{d}(s)$. By
	\Cref{steinlem} the action is compatible with the partial order so the map is an order-isomorphism.
	By the Munn representation, \Cref{classical munn}, for each $s \in S$ there is
	an order-isomorphism 
	$\theta_s\colon (ss^{-1})^{\downarrow} \to (s^{-1}s)^{\downarrow}$ given by
	$\theta_{s}(e) = s^{-1}es$.
	Let $y \in X \cdot \mathbf{r}(s)$ and $e \in E(S)$. Then,
	\[
		\alpha_{s}(y \cdot e) = y \cdot ss^{-1}es = \alpha_{s}(y) \cdot \theta_{s}(e).
	\]
	And,
\[
p(\alpha_s(y)) = p(y \cdot s) = s^{-1}p(y)s = \theta_s(p(y)).
\] 
Therefore, $(\alpha_{s}, \theta_{s})$ is a well-defined element of $T_Y$.
Define the function $\xi\colon S \to T_Y$ by
 $s \mapsto (\alpha_{s^{-1}}, \theta_{s^{-1}})$. 
 We check that $\xi $ is a homomorphism. Let $s,t \in S$. Note that $(\alpha_{t^{-1}})^{-1}=\alpha_{t}$ and so
 \[
 \dom (\alpha_{s^{-1}} \alpha_{ t^{-1} }) = \alpha_{t }( X \cdot \mathbf{r}(t) \cap X \cdot \mathbf{d}(s))
	 = X \cdot t^{-1}\mathbf{r}(t)\mathbf{d}(s)t = X \cdot \mathbf{d}(st).
 \]
Similarly $\ima \alpha_{s^{-1}} \alpha_{t^{-1}} = X \cdot \mathbf{r}(st)$.
Hence, by (SA1), $\alpha_{(st)^{-1} } = \alpha_{s^{-1} } \alpha_{t^{-1}}$ and $\xi $ is a homomorphism.
We show that this is
a wide representation. An idempotent of $T_{Y}$ is of the form $(\id_{X \cdot e}, \id_{e^{\downarrow}})$ for some $e \in E(S)$.
By \Cref{classical munn}, $\id_{e^{\downarrow}} = \theta_{e}$.
For all
$x \in X \cdot e$, since $p(x) \leq e$, \[
\alpha_e(x) = x \cdot e = x \cdot p(x)e = x \cdot p(x) = x
.\]
Hence, $\id_{X \cdot e } = \alpha_{e }$  and $\xi(e) = (\id_{X \cdot e}, \id_{e^{\downarrow}})$.
Lastly note that $\xi$ is idempotent-separating since if 
$(\alpha_{e},\theta_{e}) = (\alpha_{f}, \theta_{f})
	.$ Then,
	\[
		e = \id_{e^{\downarrow}}(e) =  \theta_e(e) = \theta_{f}(e) = ef
	\]
	and with a symmetric argument
	$f = ef$. Therefore, $e=f$ and the representation is injective on the idempotents.

\end{proof}

We now describe the kernel of $\xi$, the homomorphism defined in \Cref{Thm: General Munn},
in particular when the kernel is equality and hence the representation
is faithful. Let $(X,S,p)$ be a supported action.
Define the relation $\rho_{X}$ on $S$ by
\[
	(s,t) \in \rho_{X} \iff x \cdot s = x\cdot t \ \mbox{ for all } \ x \in X
.\]

\begin{prop} \label[lemma]{IdempRho}
Let $(X,S,p)$ be a supported action. Then:
\begin{enumerate}
\item 
	 $\rho_{X}$ is a congruence on $S$ and $\ker \xi \subseteq \rho_{X}$.
\item 
	$\rho_{X}$ is idempotent-separating if and only if $\ker \xi = \rho_{X}$.
\item 
	$\xi\colon S \to T_{Y}$ is faithful if $\rho_{X}$ is equality.
\item 
If $p$ is surjective then $\rho_{X}$ is idempotent-separating.
\end{enumerate}

\end{prop}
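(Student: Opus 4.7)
The plan is to handle parts (1) and (2) by direct verification using the supported-action axioms and the explicit definition of $\xi$ in \Cref{Thm: General Munn}, deduce (3) as a corollary of (1), and prove (4) by a short argument using surjectivity of $p$.

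For part (1), reflexivity, symmetry and transitivity of $\rho_{X}$ are clear, and compatibility with multiplication follows from (SA1): if $x \cdot s = x \cdot t$ for all $x$, then $x \cdot su = (x \cdot s) \cdot u = (x \cdot t) \cdot u = x \cdot tu$, and the left-hand version follows by applying the hypothesis to the element $x \cdot u$. To show $\ker \xi \subseteq \rho_{X}$, suppose $\xi(s) = \xi(t)$. Equality of the $\theta$-components forces $\mathbf{d}(s) = \mathbf{d}(t)$ and $\mathbf{r}(s) = \mathbf{r}(t)$; since $\xi$ is a homomorphism of inverse semigroups we also have $\xi(s^{-1}) = \xi(t^{-1})$, which via the $\alpha$-components gives $y \cdot s = y \cdot t$ for all $y \in X \cdot \mathbf{r}(s)$. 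For an arbitrary $x \in X$, I would use the identity $s = \mathbf{r}(s)\, s$ together with (SA1) to write $x \cdot s = (x \cdot \mathbf{r}(s)) \cdot s$; since $x \cdot \mathbf{r}(s) \in X \cdot \mathbf{r}(s) = X \cdot \mathbf{r}(t)$ the previous equality applies and yields $x \cdot s = x \cdot t$.

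For part (2), the implication $\ker \xi = \rho_{X} \Rightarrow \rho_{X}$ idempotent-separating is immediate from \Cref{Thm: General Munn}. For the converse, assume $\rho_{X}$ is idempotent-separating and $(s,t) \in \rho_{X}$. Because every inverse semigroup congruence respects the unary inverse, $(s^{-1},t^{-1}) \in \rho_{X}$, and combining this with the congruence property yields $(ss^{-1}, tt^{-1})$, $(s^{-1}s, t^{-1}t)$, and $(ses^{-1}, tet^{-1})$ in $\rho_{X}$ for every $e \in E(S)$. All four pairs consist of idempotents, so idempotent-separation forces equality in each case. This gives $\mathbf{d}(s) = \mathbf{d}(t)$, $\mathbf{r}(s) = \mathbf{r}(t)$, agreement of $\theta_{s^{-1}}$ and $\theta_{t^{-1}}$ on every input, and (directly from the definition of $\rho_{X}$) agreement of $\alpha_{s^{-1}}$ and $\alpha_{t^{-1}}$; hence $\xi(s) = \xi(t)$.

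Part (3) is immediate: if $\rho_{X}$ is equality then $\ker \xi \subseteq \rho_{X}$ from (1) forces $\ker \xi$ to be equality. For part (4), suppose $(e,f) \in \rho_{X}$ with $e,f \in E(S)$, and use surjectivity of $p$ to pick $x \in X$ with $p(x) = e$. Then $x \cdot e = x$ by (SA2) and $x \cdot f = x \cdot e = x$ by hypothesis; applying $p$ and invoking (SA3) gives $e = p(x) = p(x \cdot f) = fef = ef$, so $e \leq f$. The symmetric choice $y \in X$ with $p(y) = f$ gives $f \leq e$, so $e = f$. The main obstacle, I anticipate, is the inclusion $\ker \xi \subseteq \rho_{X}$ in (1): the components $\alpha_{s}$ of $\xi$ are only defined on the principal subpresheaf $X \cdot \mathbf{r}(s)$, so equality of $\xi(s)$ and $\xi(t)$ gives only a partial equality a priori, and one must use the supported-action axioms and the ranges $\mathbf{r}(s), \mathbf{r}(t)$ to bootstrap this to equality of actions on all of $X$. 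Once this restriction-extension technique is in hand, (2) reduces to tracking congruence products of idempotents and (3)--(4) are short.
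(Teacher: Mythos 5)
Your proposal is correct and follows essentially the same route as the paper: the same verification that $\rho_{X}$ is a congruence, the same bootstrapping of $\alpha_{s}=\alpha_{t}$ from the principal subpresheaf $X\cdot\mathbf{r}(s)$ to all of $X$ via $x\cdot s=(x\cdot\mathbf{r}(s))\cdot s$, and the same arguments for (2)--(4) (in (2) you verify $ses^{-1}=tet^{-1}$ by hand where the paper simply cites $\rho_{X}\subseteq\mu$, which is the same fact).
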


\begin{proof}
	\quad 

	\begin{enumerate}
	\item 
		Observe that
	$\rho_{X}$ is an equivalence  relation, we show that it is a congruence.
  Let $(s,t), (s',t') \in \rho_{X}$. Then,
	\[
		(x \cdot s) \cdot s' = (x \cdot t) \cdot s' = (x \cdot t) \cdot t' \quad \mbox{for all }x \in X 
	\]
	 implying $(ss', tt') \in \rho_{X}$ and $\rho_{X}$ is a congruence.
Let $(s,t) \in \ker \xi $ then $(s^{-1}, t^{-1}) \in \ker \xi $ which implies that
$(\alpha_{s}, \theta_{s}) = (\alpha_{t}, \theta_{t})$. Since $\ker \xi$ is an idempotent-separating congruence then
$ss^{-1} = tt^{-1}$ and 
\[
	x \cdot s = x \cdot (ss^{-1})s =  \alpha_{s}(x \cdot ss^{-1}) = \alpha_{t}(x \cdot tt^{-1}) = x \cdot t
\]
for all $x \in X$. Therefore, $\ker \xi \subseteq \rho_{X}$.
\item 
Since $\xi$ is idempotent-separating if $\ker \xi = \rho_{X}$ then $\rho_{X}$ would be idempotent separating.
We prove the converse. Suppose $\rho_{X}$ is idempotent-separating.
Let $(s,t) \in \rho_{X}$. Since $\rho_{X}$ is a congruence $(ss^{-1}, tt^{-1}) \in \rho_{X}$ and therefore
$X \cdot ss^{-1} = X \cdot tt^{-1}$. For all $x \in X \cdot \mathbf{r}(s)$ 
$\alpha_{s}(x) = x \cdot s = x \cdot t = \alpha_{t}(x)$ so $\alpha_{s} = \alpha_{t}$.
Since $\rho_{X}$ is idempotent-separating $\rho_{X} \subseteq \mu $ and hence $\theta_{s} = \theta_{t}$.
Therefore,
$(\alpha_{s}, \theta_{s}) = (\alpha_{t}, \theta_{t})$ implying 
$(s^{-1}, t^{-1}) \in \ker \xi $ and so $(s,t) \in \ker \xi$.
\item 
If $\rho_{X}$ is equality they by part (1) $\ker \xi $ is equality. Therefore, $\xi $ is an injective homomorphism.

\item 
	Let $(e,f) \in \rho_{X}$ where $e,f$ are idempotents. Then $x \cdot e = x \cdot f$ for all  $x \in X
	$. 
	Applying $p$,
	\[
	p(x)e = p(x)f \quad \mbox{for all } x \in X
	.\] 
	Since the action has global support
	there exists $y \in X$ such that $p(y) = e$. Then,
	\[
	e = p(y)e = p(y)f =ef \mbox{ and } e \leq f
	.\] By symmetry $f \leq e$ and so $e = f$.
	\end{enumerate}
\end{proof}

We call $\rho_{X}$ the 
\emph{characteristic congruence} of $X$.

\begin{example}
	We give an example of a supported action where the characteristic congruence is idempotent-separating but for which
	the action is without global support. Let $E = \{1,e, f,0\}$ be a semilattice such that  $1$ is the top element,
	$0$ is the bottom element and  $ef = 0$. Let 
	$X = \{e_1,f_1,0\}$ be a set, and $p\colon X \to E$ a map defined by $p(e_1) = e$, $p(f_1) = f$  and $p(0) = 0$.
	We define the action by $x \cdot 1 = x$ and $x \cdot 0 = 0$ for all $x \in X$.
	Clearly $p$ is not surjective, however $\rho_{X}$ is also equality and therefore idempotent-separating.
\end{example}
So far, we have shown that every supported action comes with a characteristic congruence.
We now prove that every idempotent-separating congruence on $S$ arises as the characteristic congruence of some globally 
supported action
of $S$.

\begin{prop} \label[proposition]{thm: any congruence}
	Let $S$ be an inverse semigroup and let $\rho $ be an
 idempotent-separating congruence on $S$.
 Define $S/\rho \times S \to S/\rho$ by $\rho(s) \cdot t = \rho(st)$ and $p\colon S/\rho \to E(S) $ by
 $p(\rho(s)) = \mathbf{d}(s)$.
	Then $(S/\rho,S,p)$ is a globally supported action with characteristic congruence 
	$\rho$.
\end{prop}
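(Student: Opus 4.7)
My plan is to verify in turn four things: (i) the action and the map $p$ are well-defined, (ii) the axioms (SA1)--(SA3) hold, (iii) $p$ is surjective, and (iv) the characteristic congruence $\rho_{X}$ coincides with $\rho$. Well-definedness of the action $\rho(s) \cdot t = \rho(st)$ follows immediately from $\rho$ being a congruence. For $p$, if $s \mathrel{\rho} s'$ then $s^{-1}s \mathrel{\rho} (s')^{-1}s'$ since $\rho$ respects inversion, and idempotent-separation forces $\mathbf{d}(s) = \mathbf{d}(s')$, so $p$ is unambiguous.

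The three axioms reduce to a routine calculation: (SA1) is associativity of multiplication, (SA2) is $\rho(s) \cdot \mathbf{d}(s) = \rho(s \cdot \mathbf{d}(s)) = \rho(s)$, and (SA3) is $p(\rho(st)) = \mathbf{d}(st) = t^{-1}\mathbf{d}(s) t$. Global support is immediate since $p(\rho(e)) = \mathbf{d}(e) = e$ for every $e \in E(S)$.

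The substantive step is (iv). The inclusion $\rho \subseteq \rho_{X}$ is clear from the congruence property: if $s \mathrel{\rho} t$ then $\rho(us) = \rho(ut)$ for all $u \in S$. For the reverse inclusion, suppose $(s,t) \in \rho_{X}$. Substituting $u = s^{-1}$ yields $s^{-1}t \mathrel{\rho} s^{-1}s$, and substituting $u = t^{-1}$ yields $t^{-1}s \mathrel{\rho} t^{-1}t$. Multiplying on the left by $s$ and $t$ respectively then gives $s \mathrel{\rho} ss^{-1}t$ and $t \mathrel{\rho} tt^{-1}s$.

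The main obstacle is upgrading these relations to $s \mathrel{\rho} t$. Since $\rho$ is a congruence, $s \mathrel{\rho} ss^{-1}t$ implies $\mathbf{r}(s) \mathrel{\rho} \mathbf{r}(ss^{-1}t)$, and because both are idempotents, idempotent-separation of $\rho$ forces $ss^{-1} = ss^{-1}tt^{-1}ss^{-1} = (ss^{-1})(tt^{-1})$, so $ss^{-1} \leq tt^{-1}$; the symmetric argument applied to $t \mathrel{\rho} tt^{-1}s$ gives the reverse inequality, whence $\mathbf{r}(s) = \mathbf{r}(t)$. This equality of ranges collapses $ss^{-1}t$ to $tt^{-1}t = t$, and so $s \mathrel{\rho} ss^{-1}t = t$, completing the proof. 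The rest of the verification is purely formal; the only delicate point is extracting the equality $ss^{-1} = tt^{-1}$ from $\rho$-equivalence via idempotent-separation.
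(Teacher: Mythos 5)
Your proof is correct, and all of the routine parts (well-definedness of the action and of $p$, the axioms (SA1)--(SA3), and global support via $p(\rho(e))=e$) coincide with the paper's verification. The one place where you take a genuinely different route is the inclusion $\rho_{X}\subseteq\rho$. The paper obtains the key equality $ss^{-1}=tt^{-1}$ by citing the earlier fact that a globally supported action has an idempotent-separating characteristic congruence (part (4) of \Cref{IdempRho}), so that $(ss^{-1},tt^{-1})\in\rho_{X}$ already forces $ss^{-1}=tt^{-1}$, after which it concludes with $\rho(s)=\rho(ss^{-1})\cdot s=\rho(tt^{-1})\cdot t=\rho(t)$. You instead specialise the hypothesis $\rho(us)=\rho(ut)$ at $u=s^{-1}$ and $u=t^{-1}$ to get $s\mathrel{\rho}ss^{-1}t$ and $t\mathrel{\rho}tt^{-1}s$, apply $\mathbf{r}$ to both relations, and use idempotent-separation of $\rho$ itself to force $ss^{-1}\leq tt^{-1}$ and $tt^{-1}\leq ss^{-1}$, whence $ss^{-1}t=t$. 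Both arguments pivot on the same equality of ranges; yours is self-contained and does not lean on the general lemma about characteristic congruences of globally supported actions, which makes it slightly longer but independent of that earlier machinery, whereas the paper's version is shorter because it reuses a result it has already proved.
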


\begin{proof}
	First we prove that this is a supported action. 
	Suppose $\rho(s)=\rho(t)$ then
	 $\rho(s^{-1}s) = \rho(t^{-1}t)$. Since $\rho$ is idempotent-separating then $s^{-1}s = t^{-1}t$
	and the function $p$ is well-defined.
The action is clearly associative, and it is well-defined since  $\rho(s_1) = \rho(s_2)$ implies 
	 $\rho(s_1t) = \rho(s_2t)$ for any $t \in S$, by $\rho$ being a congruence.
	 Note that
	\[
		p(\rho(s)\cdot t) = p(\rho(st)) = t^{-1}p(\rho(s))t
	\]
	and \[
	\rho(s) \cdot p(\rho(s)) = \rho(s) \cdot s^{-1}s  = \rho(ss^{-1}s)= \rho(s).
\]
Hence, the action is a supported action. Moreover, since $\rho $ it idempotent-seperating, the action is global.
	Suppose that $\rho(a) \cdot s = \rho(a) \cdot t $ for all $a \in S$. Since characteristic congruences are 
	idempotent separating then $ss^{-1}=tt^{-1}$ and
	\[
		\rho(s) = \rho(ss^{-1}) \cdot s =  \rho(tt^{-1}) \cdot t = \rho(t)
	.\] 
	Therefore, $(s,t) \in \rho$.
	Conversely, suppose $(s,t) \in \rho$  then we know for all $a \in S$ that $(as,at)  \in \rho 
	.$ 
	Hence, \[
	\rho(a) \cdot s = \rho(as) = \rho(at) = \rho(a)\cdot t
	\] 
for all $a \in S$. We have shown that the characteristic congruence is $\rho$.
\end{proof}

\begin{example} \label[example]{parti point}
We define two non-isomorphic globally supported actions with the same characteristic congruence. Hence,
there is no bijection between the globally supported actions of $S$ and idempotent-separating congruences on $S$.
Let $X = \{0,1,2\}$ have the finite particular point topology. That is $X$ has open sets
$\{\emptyset, \{0\} , \{0,1\}, \{0,2\}, X\}$. Let $S$ be the inverse semigroup of partial homeomorphisms of $X$.
Then, $S$ has two non-idempotent elements $f\colon \{0,1\} \to \{0,2\}$, such that $f(0) = 0$ and $f(1) = 2$, and its inverse.
Since $f^{-1}\id_{ \{0,2\}}f = \id_{ \{0,1\}}$, by definition of $\mu$, $S$ is fundamental. Therefore,
the characteristic congruence of $(S,S,\mathbf{d})$ and $(E(S),S,\mathbf{d})$ are both equality.
\end{example}

A special case of \Cref{thm: any congruence} is when $\rho$ is equality. Then, $(S/ \rho, S,p) = (S,S, \mathbf{d})$ which was the
supported action defined in Example \ref{right ideal}. Since the action is globally supported then by
\Cref{IdempRho} $\ker \xi = \rho$ and therefore $\xi\colon S \to T_{S}$ is an embedding into the generalised Munn semigroup.
This result is similar to one of Reilly \cite{reilly1977enlarging} who constructed an embedding of every inverse semigroup
into one with similar structure to the Munn semigroup.

\begin{cor}
	Let $S$ be an inverse semigroup. Then, $(S,S,\mathbf{d})$ is the supported action corresponding to $S $ acting on itself by composition on the right 
 and $S$  embeds into the generalised Munn semigroup
	$T_{S}$.
\end{cor}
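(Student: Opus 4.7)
The plan is to assemble the corollary from results already in hand. First, the first clause is essentially a recollection: Example~\ref{right ideal} verified that the triple $(S,S,\mathbf{d})$, with right multiplication as the action and $\mathbf{d}$ as the support map, satisfies the axioms (SA1)--(SA3); I would just cite that example. This covers the identification of $(S,S,\mathbf{d})$ with $S$ acting on itself by right composition.

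For the embedding, I would apply \Cref{thm: any congruence} with $\rho$ taken to be the equality relation on $S$, which is trivially an idempotent-separating congruence. Under this choice $S/\rho$ is canonically identified with $S$, the action $\rho(s)\cdot t = \rho(st)$ becomes $s\cdot t = st$, and $p(\rho(s)) = \mathbf{d}(s)$ becomes $\mathbf{d}$, so the supported action produced is precisely $(S,S,\mathbf{d})$. The last sentence of \Cref{thm: any congruence} then tells us that the characteristic congruence $\rho_{S}$ of $(S,S,\mathbf{d})$ equals $\rho$, namely equality.

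Now I invoke \Cref{Thm: General Munn} to obtain the idempotent-separating homomorphism $\xi\colon S \to T_{S}$ associated to the supported action $(S,S,\mathbf{d})$ (viewed, as required, through its restriction to the presheaf $(S,E(S),\mathbf{d})$). Applying \Cref{IdempRho}(3) with $\rho_{S}$ equal to equality yields that $\xi$ is faithful; equivalently, by part~(1), $\ker\xi \subseteq \rho_{S} = \Delta_{S}$, so $\xi$ is injective. Hence $\xi$ embeds $S$ into $T_{S}$.

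There is no real obstacle here: every ingredient is a direct citation. The only care needed is the bookkeeping in identifying the output of \Cref{thm: any congruence} applied to the trivial congruence with the supported action of Example~\ref{right ideal}, and then feeding this into the correct clause of \Cref{IdempRho}; both are routine.
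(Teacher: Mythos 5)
Your proposal is correct and follows essentially the same route as the paper: specialise \Cref{thm: any congruence} to the equality congruence to identify $(S,S,\mathbf{d})$ and compute its characteristic congruence, then conclude injectivity of the generalised Munn representation from \Cref{IdempRho}. The only cosmetic difference is that you invoke \Cref{IdempRho}(3) directly, whereas the paper routes through global support and parts (4) and (2) to get $\ker\xi = \rho$; both yield the same conclusion.
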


Let $S$ be an inverse semigroup.
By the previous result we can see that the supported action $(S,S,\mathbf{d})$ corresponds to
the usual Wagner-Preston representation of inverse semigroups.
Recall from Example \ref{clifford subaction} the supported action $(E(S), S, \id_{E(S)})$. By
\Cref{Munn action} the generalised Munn semigroup of $E(S)$ is isomorphic to the Munn semigroup $T_{E(S)}$. Moreover,
it is easy to see that the characteristic congruence $\rho_{E(S)}$ is $\mu$. Hence,  $(E(S),S,\id_{E(S)})$ corresponds to
the Munn representation theorem.

We now show that every supported action gives rise to an idempotent-separating 
homomorphism on to the generalised Munn semigroup.

\begin{thm} \label{actions are homo}
	Let $S$ be an inverse semigroup and $(X,E(S),p)$ a presheaf. Then, the following are equivalent:
	\begin{enumerate}
	\item There exists an idempotent-separating homomorphism $\phi\colon S \to T_{X}$ whose image is a 
	wide inverse subsemigroup of $T_{X}$.
	\item 
	There exists a supported action $(X,S,\overline{p})$ such that the presheaf obtained by restricting 
	to the idempotents is isomorphic to $(X,E(S),p)$.
	\end{enumerate}
\end{thm}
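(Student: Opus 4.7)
The implication $(2) \Rightarrow (1)$ is precisely \Cref{Thm: General Munn}. For the converse, suppose $\phi\colon S \to T_X$ is an idempotent-separating homomorphism whose image is wide, and write $\phi(s) = (\beta_s, \omega_s)$. The plan is to reverse the construction of \Cref{Thm: General Munn}: I will define the action of $s$ on $X$ by first using the given $E(S)$-action to move $x$ into the domain of $\beta_{s^{-1}}$, and then applying $\beta_{s^{-1}}$.

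First I would fix notation. Because $\phi$ is wide and idempotent-separating, $\phi|_{E(S)}$ is a bijection onto $E(T_X)$; combined with the identification $E(T_X)\cong E(S)$ from \Cref{lem: theta}, this produces a semilattice automorphism $\sigma\colon E(S) \to E(S)$ with $\phi(e) = (\id_{X\cdot \sigma(e)}, \id_{\sigma(e)^{\downarrow}})$, and consequently $\beta_s\colon X\cdot \sigma(\mathbf{d}(s)) \to X\cdot \sigma(\mathbf{r}(s))$ with $\omega_s\colon \sigma(\mathbf{d}(s))^{\downarrow} \to \sigma(\mathbf{r}(s))^{\downarrow}$. I would then set $\overline{p}(x) \coloneqq \sigma^{-1}(p(x))$ and, for each $s\in S$,
\[
    x \cdot s \ \coloneqq \ \beta_{s^{-1}}\bigl(x \cdot \sigma(\mathbf{r}(s))\bigr),
\]
where on the right $\cdot$ denotes the original $E(S)$-action on $X$. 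Since $p(x\cdot \sigma(\mathbf{r}(s))) = p(x)\sigma(\mathbf{r}(s)) \leq \sigma(\mathbf{r}(s))$, the argument lies in the domain of $\beta_{s^{-1}}$, so the formula is well-defined.

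The hard part will be pinning down $\omega_s$ explicitly; without such a formula axiom (SA3) is out of reach. My claim is that $\omega_s(\sigma(e)) = \sigma(ses^{-1})$ for every $e \leq \mathbf{d}(s)$, proved as follows: the element $se \in S$ satisfies $\mathbf{d}(se) = e$ and $\mathbf{r}(se) = ses^{-1}$, and the homomorphism identity $\phi(se) = \phi(s)\phi(e)$ expresses $\phi(se)$ as the restriction of $\phi(s)$ to the subpresheaf indexed by $e$; matching the codomain of this restriction against the one forced by the range idempotent $\phi(\mathbf{r}(se))$ pins down $\omega_s(\sigma(e))$. Granted the formula, (SA3) is a direct calculation using commutativity of $E(S)$ and the identity $\mathbf{r}(s)s = s$, which together yield $\overline{p}(x\cdot s) = s^{-1}\overline{p}(x)s$.

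The remaining verifications are routine. Axiom (SA2) reduces to $\beta_{\overline{p}(x)} = \id$ combined with $x \cdot p(x) = x$ for the original presheaf. For (SA1), the plan is to use the homomorphism property $\phi((st)^{-1}) = \phi(t^{-1})\phi(s^{-1})$, the inequality $\mathbf{r}(st) \leq \mathbf{r}(s)$, and the fact that $(\beta_{s^{-1}},\omega_{s^{-1}})$ is a morphism of subpresheaves, to derive
\[
    \beta_{s^{-1}}\bigl(x \cdot \sigma(\mathbf{r}(st))\bigr) \ = \ \beta_{s^{-1}}\bigl(x \cdot \sigma(\mathbf{r}(s))\bigr) \cdot \sigma(\mathbf{r}(t)),
\]
after which applying $\beta_{t^{-1}}$ yields $(x\cdot s)\cdot t = x\cdot(st)$. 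Finally, the pair $(\id_X,\sigma)$ is an isomorphism of presheaves $(X,E(S),\overline{p})\to(X,E(S),p)$, because $\sigma\circ\overline{p} = p$ and, by construction, $x\cdot_{\overline{p}} e = \beta_e(x\cdot \sigma(e)) = x\cdot \sigma(e)$ for every $e\in E(S)$.
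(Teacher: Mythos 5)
Your proposal is correct and follows essentially the same route as the paper: both directions reduce $(2)\Rightarrow(1)$ to \Cref{Thm: General Munn}, and for $(1)\Rightarrow(2)$ both define $\overline{p}$ by transporting $p$ along the isomorphism $\phi|_{E(S)}\colon E(S)\to E(T_X)$ and define the action by pushing $x$ into the codomain of $\phi(s)$ via the given $E(S)$-action and then applying the inverse partial automorphism, which is exactly the paper's $x\circ s=\alpha_s^{-1}(x\cdot f)$. Your extra step of deriving the explicit formula $\omega_s(\sigma(e))=\sigma(ses^{-1})$ before checking (SA3) is a correct but purely expositional variant of the paper's verification, which instead computes $\phi(s^{-1}\overline{p}(x)s)=\phi\overline{p}(x\circ s)$ inside $T_X$ and invokes injectivity of $\phi|_{E(S)}$.
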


\begin{proof}
	By \Cref{Thm: General Munn} for every supported action $(X,S,p)$ there is an idempotent-separating
	homomorphism $\xi\colon S \to T_{Y}$, whose image is a wide inverse subsemigroup of $T_{Y}$,
	where $Y$ is the presheaf obtained from restricting $(X,S,p)$ to the idempotents.
	We prove the converse.
	Let $\phi\colon S \to T_{X}$ be an idempotent-separating homomorphism such that $\phi(E(S)) = E(T_{X})$.
	Then, the restriction 
	$\phi|_{E(S)}\colon E(S) \to E(T_{X})$ is an isomorphism.
	Define $\overline{p}\colon X \to E(S)$ by
	\[
		\overline{p}(x) = \phi|_{E(S)}^{-1}(\id_{X \cdot p(x)}, \id_{p(x)^{\downarrow}}).
	\]
	Define the action $X \times S \to X$  by
	 \[
		 x \circ  s = \alpha_{s}^{-1}(x \cdot f).
	 \] where $\phi(s) = (\alpha_{s}, \theta_{s})\colon (X \cdot e, e^{\downarrow}) \to (X \cdot f, f^{\downarrow})$.
	 We show that $(X,S, \overline{p})$ is a supported action. If $\phi(s)$ is as before and
	 $\phi(t) = (\alpha_{t}, \theta_{t})\colon (X \cdot g, g^{\downarrow}) \to (X \cdot h, h^{\downarrow})$ then:
	 \[
		 (x \circ s) \circ t = \alpha_{t}^{-1}(\alpha_{s}^{-1}(x \cdot f) \cdot h) 
		 = \alpha_{t}^{-1}\alpha_{s}^{-1}(x \cdot  \theta_{s}(eh)).
	 \]
	 Since
	 \[
		 \ima \alpha_{s}\alpha_{t} = \alpha_{s}( X \cdot e \cap X \cdot h)  
		 = \alpha_{s}(X \cdot eh) = X \cdot \theta_{s}(eh)
	 \]
	 it is shown that
	  $(x \circ s) \circ t = x \circ st$ and (SA1) holds. For all $x \in X$
	  \[
		  x \circ \overline{p}(x) = x \circ \phi|_{E(S)}^{-1}(\id_{X \cdot p(x)}, \id_{p(x)^{\downarrow}})
		  = \id_{X \cdot p(x)}(x) = x.
	  \]
	  Hence, (SA2) holds. Let $\phi(s) = (\alpha_{s}, \theta_{s})$ as before. Since
	  \[
		  p(\alpha^{-1}(x \cdot f)) = \theta_{\alpha}^{-1}(p(x)f) = e \theta_{\alpha}^{-1}(p(x))
	  \]
	  and
\[
	\alpha_{s}^{-1}(X \cdot p(x) \cap  X \cdot f) = \alpha_{s}^{-1}(X \cdot fp(x)) = \alpha_{s}^{-1}(X \cdot f)
	\cdot \theta_{s}^{-1}(p(x)) = X \cdot e \theta^{-1}_{s}(p(x))
.\] 
Then,
\[
	\phi(s^{-1}\overline{p}(x)s) = 
	(\alpha_{s}^{-1},\theta_{s}^{-1})(\id_{X \cdot p(x)}, \id_{p(x)^{\downarrow}})(\alpha_{s},\theta_{s})
	= \phi\overline{p}(x \circ s).
\]
Therefore, since $\phi|_{E(S)}\colon E(S) \to E(T_{X}) $ is an isomorphism, (SA3) holds.
Note that the restriction of $(X,S,\overline{p})$ to idempotents is a presheaf isomorphic to $(X,E(S),p)$ by the isomorphism
$E(S) \to E(S)$ defined by $e \mapsto \phi|_{E(S)}^{-1}(\id_{X \cdot e}, \id_{e^{\downarrow}}) $.
\end{proof}

\begin{example}
	In the case of the Munn semigroup two semilattices $E$ and $F$ are isomorphic if and only $T_{E}$ and $T_{F}$ 
	are isomorphic. We show that the same is not true for the generalised case.
	Let $E = \{e,f, 0\}$ be the semilattice with $0 \leq f \leq e$.
	Let  $X = \{e_1,e_2,e_3,f_1,f_2,0\}$, with slight abuse of notation in using $0$ again,
	and $p\colon X \to E$ be defined by $p(e_{i}) = e$ for $i \in \{1,2,3\}$,
	$p(f_j) = f$ for $j \in \{1,2\}$, $p(0) = 0$.
	Then, $(X,E,p)$ is a presheaf with $e_{i} \cdot f = f_1$ for $i \in \{1,2\}$, $e_3 \cdot f = f_2$, and
	$x \cdot 0 = 0$ for all $x \in X$.
	Note that $X \cdot f = \{f_1,f_2,0\}$ and $X \cdot 0 = \{0\}$.
	Let $(\alpha, \theta_{\alpha}) \in T_{X}$ such that $\alpha\colon X \to X$.
	Suppose that $\alpha(e_1) = e_3$. Then, since $\alpha$ is an order-isomorphism $\alpha(f_1) \leq \alpha(e_1) = e_{3}$.
	Hence,
	$\alpha(f_1) = f_{2}$ the only element in $X_{f}$ below $e_3$ in the partial order of $X$.
	Since $\alpha(e_2) \geq \alpha(f_1) = f_2$ it implies $\alpha(e_2) = e_{3}$. This is a 
	contradiction to $\alpha$ being an isomorphism. So $\alpha(e_1) = e_2$ or $e_1$ only.
	Hence, the only possible $(\alpha, \theta_{\alpha}) \in T_{X}$ such that $\alpha\colon X \to X$ are
	$(\id_{X}, \id_{E})$ and $(\beta, \theta_{\beta})$ defined by swapping $e_1 $ and $e_2$, and fixing all other $x \in X$.
	Similarly, the only $(\alpha, \theta_{\alpha}) \in T_{X}$ such that $\alpha\colon X \cdot f \to X\cdot f$ are
	$(\id_{X \cdot f}, \id_{f^{\downarrow}})$ and the map swapping $f_1$ and $f_2$.
	Let $Y = \{e_1,e_2,f_1,f_2,0\}$ be the subpresheaf of $X$. Then, it is easily verified that
	$T_{X} \cong T_{Y}$.
	Let $S$ be an inverse semigroup with idempotents isomorphic to $E$.
	By \Cref{actions are homo} since $T_{X}$ and $T_{Y}$ are isomorphic there is a bijection between the
	supported actions of $S$ on $X$ with presheaf isomorphic to $(X,E,p)$ and supported actions on $Y$
	with presheaf isomorphic to $(Y,E, p)$.
\end{example}

\section{Connection with the work of Zhitomirskiy}

We now show the connections of our work with topology. We will start with the case of the Munn semigroup and then
continue with the generalised case.
Let $(X, \tau)$ be a topological space. Denote by $\mathcal{I}(X, \tau)$ the inverse semigroup of all
homeomorphisms between the open subsets of $X$. 
We recall some basic theory of frames and their related spaces, details may be found in \cite{sheaves}.
The \emph{join} of elements in a partially ordered set is the least upper bound.
A \emph{frame} is a semilattice in which all arbitrary joins exist and is infinitely
distributive. A \emph{frame homomorphism} is a homomorphism
preserving arbitrary joins.
If $X$ is a topological space, denote by $\Omega(X) $ the frame of
all open sets of $X$.
Observe that this construction $X \mapsto \Omega(X)$ can be made into a functor as for any continuous map 
$\theta\colon X \to Y$, the 
inverse image $\theta^{-1}\colon \Omega(Y) \to \Omega(X)$ defines a frame homomorphism. We will also denote
$\theta^{-1}$ by $\Omega(\theta)$.
From every frame $E$ we construct a space $\pt (E)$ by taking the set of completely prime filters and defining a basis
of the topology with sets $\ats_{e} = \{ F \in \pt (E) : e \in F \}$ for all $e \in E$. This construction also defines
a contravariant functor since the inverse image of a frame homomorphism $\theta$, denoted $\pt(\theta)$,
maps completely prime filters to completely prime filters.
Let $X$ be a topological space and $E$ be a frame.
Define $\eta_{X}\colon X \to \pt \Omega(X)$ to be the map taking each point to its neighbourhood filter and
$\epsilon_{E}\colon \Omega \pt(E) \to E$ to be the morphism in $\catname{Frame}^{\op}$ representing
	the frame homomorphism $E \to \Omega \pt(E)$ taking
	$e \mapsto \ats_{e}$.
Then, $\eta$ is the unit and $\epsilon$ is the counit defining an adjunction between $\Omega$ and $\pt$. 

\begin{thm}[{\cite[Chap. 9]{sheaves}}] \label{adjunction}
	The functor $\Omega\colon \catname{Spaces} \to \catname{Frame}^{\op}$ taking $X \to \Omega(X)$ has a right adjoint
	$\pt\colon \mbox{\normalfont{Frame}}^{\op} \to \catname{Spaces}$.

\end{thm}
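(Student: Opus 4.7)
The plan is to establish the adjunction by constructing the unit $\eta$ and counit $\epsilon$ explicitly, as already indicated in the excerpt, and then verifying the two triangle identities. First I would confirm that both assignments are functorial: if $\theta\colon X \to Y$ is continuous, then $\theta^{-1}$ preserves finite meets and arbitrary joins of open sets, so $\Omega(\theta)$ is a frame homomorphism; dually, if $\phi\colon E \to F$ is a frame homomorphism, then $\pt(\phi)$, defined by taking the preimage of a completely prime filter, lands in completely prime filters of $E$ and is continuous because $\pt(\phi)^{-1}(\ats_e) = \ats_{\phi(e)}$.

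Next I would verify that $\eta_{X}\colon X \to \pt \Omega(X)$ is well-defined and continuous. The neighbourhood filter of a point $x$ is a completely prime filter in $\Omega(X)$: it is upward closed, closed under finite intersections, and if $x \in \bigcup_{i} U_{i}$ then $x \in U_{i}$ for some $i$. Continuity follows from the direct computation $\eta_{X}^{-1}(\ats_{U}) = \{x \in X : U \in \eta_{X}(x)\} = U$ for each open $U \subseteq X$. For the counit I require the morphism $E \to \Omega\pt(E)$ sending $e \mapsto \ats_{e}$ to be a frame homomorphism (which is precisely the representative in $\catname{Frame}^{\op}$ of $\epsilon_{E}\colon \Omega\pt(E) \to E$). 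Preservation of finite meets amounts to closure of completely prime filters under finite meets, and preservation of arbitrary joins is exactly the complete primeness condition. Naturality of $\eta$ in $X$ and of $\epsilon$ in $E$ then unwinds immediately from these two computations.

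The main obstacle, and the real content of the proof, is the verification of the two triangle identities. Working in $\catname{Frame}^{\op}$ forces one to keep careful track of directions, but concretely one triangle reduces to the identity $\Omega(\eta_{X})(\ats_{U}) = \eta_{X}^{-1}(\ats_{U}) = U$ established above, which says that the composite $\Omega(X) \to \Omega \pt \Omega(X) \to \Omega(X)$ is the identity on the frame $\Omega(X)$. The other triangle says that for every completely prime filter $F \in \pt(E)$ the composite $\pt(E) \to \pt \Omega \pt(E) \to \pt(E)$ maps $F$ to $\{e \in E : F \in \ats_{e}\}$, which equals $F$ by the very definition of $\ats_{e}$. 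Once these two identities are in hand, the standard unit--counit characterisation of adjunctions yields $\Omega \dashv \pt$.
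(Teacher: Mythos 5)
Your proof is correct and takes exactly the route the paper intends: the theorem is quoted from Mac Lane--Moerdijk, and the text immediately preceding it already names the unit $\eta_{X}$ (neighbourhood filters) and the counit $\epsilon_{E}$ (represented by $e \mapsto \ats_{e}$) whose triangle identities you verify. The only detail worth making explicit is that completely prime filters should be taken proper and inhabited, so that $e \mapsto \ats_{e}$ also preserves the empty join and the empty meet.
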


	A space $X$ is said to be \emph{sober} if $\eta_{X}\colon X \to \pt \Omega(X)$ is a homeomorphism. Explicitly, $X$ is sober
	if every completely prime filter in $\Omega(X)$ is the neighbourhood filter of a unique point in $X$.
	A frame $E$ is said to have \emph{enough points} if $\epsilon_{E}\colon \Omega \pt(E) \to E$ is an isomorphism.
\begin{lem} \label[lemma]{lem: image}
	Let $X$ be a sober space and $\theta \in T_{\Omega(X)}$. Then, if 
	$\theta\colon \Omega(V) \to \Omega(U)$, the map
	\[
		\theta^{\ast } =  \eta_{V}^{-1}   \pt(\theta)    \eta_{U}\colon U \to V
	\]
	is a homeomorphism.
	Moreover, $\theta^{\ast}(W) = \theta^{-1}(W)$
	for all open subsets $W$ of $U$.
\end{lem}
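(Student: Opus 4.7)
The plan is to recognise $\theta^{*}$ as a composition of three homeomorphisms and then verify the image formula by unwinding the definitions of the unit $\eta$ and of $\pt(\theta)$.

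For the first part, the key observation is that an open subspace of a sober space is itself sober, a standard fact from frame theory \cite[Chap. 9]{sheaves}. Hence $U$ and $V$ are sober, so by definition $\eta_{U}$ and $\eta_{V}$ are homeomorphisms. Next, since $\theta \in T_{\Omega(X)}$, it is an order-isomorphism between the principal order ideals $V^{\downarrow}$ and $U^{\downarrow}$ of the frame $\Omega(X)$, which are naturally identified with the frames $\Omega(V)$ and $\Omega(U)$. Any order-isomorphism between complete lattices preserves arbitrary joins and meets, so $\theta$ is automatically a frame isomorphism, and by functoriality of $\pt$ the map $\pt(\theta)\colon \pt\Omega(U) \to \pt\Omega(V)$ is a homeomorphism. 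Therefore $\theta^{*}$ is a composition of three homeomorphisms, hence itself a homeomorphism.

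For the second assertion, I unwind the definitions. For any $x \in U$, the filter $\eta_{U}(x)$ is the completely prime filter of open neighbourhoods of $x$ in $U$, and by construction $\pt(\theta)$ sends a filter $F$ to $\theta^{-1}(F)$. Thus
\[
\eta_{V}(\theta^{*}(x)) = \pt(\theta)(\eta_{U}(x)) = \{A \in \Omega(V) : x \in \theta(A)\},
\]
so $\theta^{*}(x)$ is characterised as the unique point $y \in V$ such that, for every $A \in \Omega(V)$, $y \in A$ if and only if $x \in \theta(A)$. Taking $A = \theta^{-1}(W)$ for an arbitrary open $W \subseteq U$ gives $\theta^{*}(x) \in \theta^{-1}(W)$ iff $x \in \theta(\theta^{-1}(W)) = W$; since $\theta^{*}$ is a bijection this proves $\theta^{*}(W) = \theta^{-1}(W)$.

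The main subtlety I anticipate is not computational but conceptual: one has to assemble several standard background facts (open subspaces inherit soberness, an order-isomorphism between frames is a frame isomorphism, and $\pt(\theta)$ acts as preimage under $\theta$) before the quick composition and substitution arguments go through. Once these are in place, both parts are essentially immediate.
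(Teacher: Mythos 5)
Your proof is correct and follows essentially the same route as the paper: open subspaces of a sober space are sober, so $\eta_{U}$ and $\eta_{V}$ are homeomorphisms, $\pt(\theta)$ is a homeomorphism by functoriality, and the image formula follows by unwinding the definitions of $\eta$ and $\pt(\theta)$ (you do this pointwise where the paper works with the sets of completely prime filters, but the content is identical). Your extra remark that an order-isomorphism of complete lattices is automatically a frame isomorphism is a point the paper leaves implicit, and is a worthwhile addition.
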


\begin{proof}
	By being a functor $\pt(\theta)\colon \pt \Omega(U) \to \pt \Omega(V)$ is a homeomorphism.
	Since open sets of a sober space are sober with respect to the subspace topology then $\eta_{U}$ and $\eta_{V}$
	are homeomorphisms. Hence, $\theta^{\ast}$ is a homeomorphism.
	Following the definitions $\eta_{U}(W)$ is the set
	of completely prime filters of $\Omega(U)$ containing $W$.
	By $\pt(\theta) $ this is mapped to the set of
	completely prime filters containing $\theta^{-1}(W)$. Therefore, $\theta^{\ast}(W)$ is all points
	contained in $\theta^{-1}(W)$.
\end{proof}

\begin{lem} \label[lemma]{lem: im gives equal}
	Let $U,V$ be sober spaces and $f,g\colon U \to V$ be a pair of homeomorphisms. 
	If
	 $f(A) = g(A)$ for all $A \in \Omega(U)$ then $f = g$.
\end{lem}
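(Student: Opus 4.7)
The plan is to reduce the statement to the $T_0$-property of sober spaces. First I would observe that since $f$ and $g$ are homeomorphisms, the maps $f,g\colon \Omega(U) \to \Omega(V)$ (sending an open set to its image) are both frame isomorphisms. The hypothesis says they are equal as functions, so in particular they induce the same bijection between $\Omega(U)$ and $\Omega(V)$.

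Next I would translate the equality of images into equality of preimages. For each $B \in \Omega(V)$, let $A = f^{-1}(B)$; by hypothesis $g(A) = f(A) = B$, whence $A = g^{-1}(B)$. Hence $f^{-1}(B) = g^{-1}(B)$ for every $B \in \Omega(V)$.

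From this it follows that for any $x \in U$ the points $f(x),g(x) \in V$ have the same neighbourhood filter: for any $B \in \Omega(V)$,
\[
f(x) \in B \iff x \in f^{-1}(B) = g^{-1}(B) \iff g(x) \in B.
\]
In other words $\eta_{V}(f(x)) = \eta_{V}(g(x))$. Since $V$ is sober, $\eta_{V}\colon V \to \pt \Omega(V)$ is a homeomorphism, in particular injective, and therefore $f(x) = g(x)$.

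There is no real obstacle here; the only subtlety is keeping careful track of the fact that ``$f(A) = g(A)$ as images of open sets'' forces $f^{-1}(B) = g^{-1}(B)$ for \emph{every} open $B \subseteq V$ (which uses that $f$ and $g$ are bijections onto $V$ whose image on $\Omega(U)$ is $\Omega(V)$). Once that is established, sobriety of $V$ closes the argument immediately.
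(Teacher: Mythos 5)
Your proof is correct and follows essentially the same route as the paper's: both reduce the hypothesis to $\Omega(f)=\Omega(g)$ and then recover $f=g$ from the unit $\eta_V$. The only difference is presentational --- the paper invokes naturality of $\eta$ to write $f=\eta_{V}^{-1}\,\pt(\Omega(f))\,\eta_{U}$, whereas you argue pointwise via injectivity of $\eta_{V}$, which incidentally makes clear that only the $T_0$ property of $V$ is actually needed.
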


\begin{proof}
	Since $f(A) = g(A)$ for all $A \in \Omega(U) $, this implies $\Omega(f) = \Omega(g)$. Hence, since 
	$\eta$ is a natural isomorphism,
	 \[
		f = \eta_{V}^{-1}   \pt(\Omega(f))   \eta_{U} = \eta_{V}^{-1}   \pt(\Omega(g))   \eta_{U} = g
	.\]
\end{proof}

\begin{lem} \label[lemma]{sobfund}
	Let $X$ be a sober space. Then, $\mathcal{I}(X,\tau)$ is fundamental.
\end{lem}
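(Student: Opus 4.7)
The plan is to prove $\mathcal{I}(X,\tau)$ is fundamental by showing its maximum idempotent-separating congruence $\mu$ is trivial. Take $(f,g) \in \mu$. Since $\mu$ is idempotent-separating it preserves $\mathcal{L}$- and $\mathcal{R}$-classes, so $f^{-1}f = g^{-1}g$ and $ff^{-1} = gg^{-1}$; hence $f$ and $g$ share a common domain $U$ and common range $V$, both of which are open subsets of $X$ and therefore sober in the subspace topology.

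Next I would exploit that the idempotents of $\mathcal{I}(X,\tau)$ are precisely the identity maps $\id_{W}$ for $W \in \Omega(X)$. A direct calculation of partial conjugation gives $f^{-1}\id_{W} f = \id_{f^{-1}(W \cap V)}$ and similarly for $g$, so the condition $f^{-1}\id_{W}f = g^{-1}\id_{W}g$ coming from $(f,g) \in \mu$ yields
\[
	f^{-1}(W) = g^{-1}(W) \quad \text{for every } W \in \Omega(V).
\]

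Now I would apply \Cref{lem: im gives equal} not to $f$ and $g$ themselves but to the homeomorphisms $f^{-1},g^{-1}\colon V \to U$, noting that $V$ and $U$ are sober. The displayed equality above is exactly the hypothesis $f^{-1}(A) = g^{-1}(A)$ for all $A \in \Omega(V)$ required by \Cref{lem: im gives equal}, so $f^{-1} = g^{-1}$ and therefore $f = g$. Hence $\mu$ is the equality relation and $\mathcal{I}(X,\tau)$ is fundamental.

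The only subtle point is step three: one must resist the temptation to feed $f$ and $g$ directly into \Cref{lem: im gives equal}, because the conjugation relation yields information about preimages rather than images; passing to inverses converts it into the image-based hypothesis the lemma requires. The soberness of $U$ and $V$, inherited from $X$ via subspace sobriety, is exactly what lets \Cref{lem: im gives equal} apply.
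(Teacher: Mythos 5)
Your proof is correct and is essentially the paper's argument in mirror image: the paper applies the $\mu$-condition to $(f^{-1},g^{-1})$ to get $f(W)=g(W)$ and then invokes \Cref{lem: im gives equal} on $f,g$, whereas you apply it to $(f,g)$ to get $f^{-1}(W)=g^{-1}(W)$ and invoke the lemma on $f^{-1},g^{-1}$. Both routes use the same key lemma and the same subspace-sobriety observation, so the proofs coincide up to this trivial relabelling.
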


\begin{proof}
	Let $f,g \in \mathcal{I}(X, \tau) $ such that $(f,g) \in \mu$. Then, since $\mu $ is idempotent-separating, $f$ and $g$ 
	have same
	domain and range $f,g\colon U \to V$. Let $W \in \Omega(U)$, as $(f^{-1},g^{-1}) \in \mu$, 
	\[
		\id_{f(W)} = f \id_{W} f^{-1} = g \id_{W} g^{-1} = \id_{g(W)}
	.\]
	Hence, $f(W) = g(W)$ for all $W \in \Omega(U)$, by \Cref{lem: im gives equal} $f = g$.
\end{proof}

\begin{example}
	Recall from Example \ref{parti point} the space $X = \{0,1,2\}$ with the finite particular point topology.
	It was proved that $\mathcal{I}(X, \tau)$ was fundamental. However, $X$ is not sober. Note that
	$ \{ \{0\}, \{0,1\}, \{0,1,2\}\}$ is a completely prime filter of $\Omega(X)$, however it is not a neighbourhood filter
	of any point of $X$.
	Hence, $\eta_{X}\colon X \to \pt \Omega(X)$ is not surjective.
	This shows that if $\mathcal{I}(X, \tau)$ is fundamental it is not
	necessary that $X$ is sober.
\end{example}

\begin{prop} \label[proposition]{sober}
	Let $X$ be a sober space. Then, $T_{\Omega(X)} \cong \mathcal{I}(X, \tau)$.
\end{prop}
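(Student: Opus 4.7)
The plan is to construct the isomorphism directly using direct images. Define $\Phi\colon \mathcal{I}(X,\tau) \to T_{\Omega(X)}$ on a partial homeomorphism $f\colon A \to B$ of open sets by $\Phi(f) = f_{*}$, where $f_{*}\colon \Omega(A) \to \Omega(B)$ is the direct image $W \mapsto f(W)$. Since $A$ and $B$ are open, $\Omega(A)$ and $\Omega(B)$ are exactly the principal order-ideals $A^{\downarrow}$ and $B^{\downarrow}$ of $\Omega(X)$, and $f_{*}$ is an order-isomorphism between them with two-sided inverse $(f^{-1})_{*}$. Hence $\Phi(f)$ lies in $T_{\Omega(X)}$, and this part of the construction does not use sobriety.

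Next I would check that $\Phi$ is a homomorphism. For composable partial homeomorphisms $f\colon A \to B$ and $g\colon C \to D$, the product in $\mathcal{I}(X,\tau)$ is the restriction of $g \circ f$ to $f^{-1}(B \cap C)$, while the product in $T_{\Omega(X)}$ restricts $g_{*} \circ f_{*}$ to the principal ideal $\Omega(f^{-1}(B \cap C))$; since direct images commute with composition, both restrictions match and $\Phi(gf) = \Phi(g)\Phi(f)$.

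The rest is where sobriety enters. Injectivity reduces, after matching domains and codomains via the obvious fact that principal ideals determine their generators, to showing that two homeomorphisms $f,g\colon A \to B$ with $f(W) = g(W)$ for every $W \in \Omega(A)$ must agree; this is \Cref{lem: im gives equal}, since open subspaces of a sober space are sober. For surjectivity, given $\theta\colon \Omega(V) \to \Omega(U)$ in $T_{\Omega(X)}$, I set $h = (\theta^{\ast})^{-1}\colon V \to U$, where $\theta^{\ast}$ is the homeomorphism furnished by \Cref{lem: image}. Using the key identity $\theta^{\ast}(W) = \theta^{-1}(W)$ from that lemma, for any $W' \in \Omega(V)$ one has $W' = \theta^{\ast}(h(W')) = \theta^{-1}(h(W'))$, so $h(W') = \theta(W')$ and therefore $\Phi(h) = \theta$.

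The substantive step is surjectivity, and essentially all of its content has already been packaged into \Cref{lem: image}: the proof just witnesses, at the level of individual morphisms, the familiar fact that the adjunction of \Cref{adjunction} restricts to an equivalence between open subspaces of the sober space $X$ and principal ideals of $\Omega(X)$.
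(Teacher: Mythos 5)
Your proof is correct and takes essentially the same route as the paper: your map $\Phi(f)=f_{*}$ is, up to composing with inversion, the representation $\delta$ the paper uses, and both arguments draw all of their sober-space content from \Cref{lem: im gives equal} (injectivity) and \Cref{lem: image} (surjectivity, handled identically). The only presentational difference is that the paper obtains the embedding by citing the classical Munn representation theorem together with the fundamentality of $\mathcal{I}(X,\tau)$ (\Cref{sobfund}), whereas you construct the homomorphism and verify injectivity by hand.
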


\begin{proof}
	We identify the idempotents $E(\mathcal{I}(X, \tau)) \cong \Omega(X)$.
By the Munn representation theorem and \Cref{sobfund} there is an embedding $\delta\colon \mathcal{I}(X, \tau)
	\to T_{\Omega(X)}$.
	We show that $\delta $ is surjective.  By \Cref{lem: image} for every
	order-isomorphism $\theta\colon \Omega(V) \to \Omega(U)$ there is a homeomorphism
	$\theta^{\ast }\colon U \to V$ such that $\theta^{\ast}(W) = \theta^{-1}(W)$ for all $W \in \Omega(V)$.
	Let $W \in \Omega(V)$.
	Then,
	\[
		\delta_{\theta^{\ast}}(\id_{W}) = (\theta^{\ast })^{-1} \id_{W} \theta^{\ast}
	\]
	is the identity on the set $(\theta^{\ast})^{-1}(W)$. Since $\theta $ is an isomorphism	$\theta^{\ast}(\theta(W)) = \theta^{-1}(\theta(W)) = W$. Hence,
	$\delta_{\theta^{\ast}}(\id_{W}) = \id_{\theta(W)}$ for all $W \in \Omega(V)$ and $\delta_{\theta^{\ast}} = \theta$.
\end{proof}

If $E$ is a frame with enough points then $\pt(E)$ is sober and $E \cong \Omega \pt (E)$.

\begin{cor}
	If $E$ is a frame with enough points then
	\[
		T_{E} \cong \mathcal{I}(\pt(E), \tau)
	.\]
\end{cor}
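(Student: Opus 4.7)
The plan is to reduce the corollary directly to Proposition E by transporting the hypothesis across the $\Omega \dashv \pt$ adjunction. Having enough points means, by definition, that the counit $\epsilon_{E}\colon \Omega \pt(E) \to E$ is an isomorphism in $\catname{Frame}^{\op}$; equivalently, $E$ is isomorphic as a frame (and therefore as a meet-semilattice) to $\Omega \pt(E)$. A standard consequence, already invoked just before the statement, is that $\pt(E)$ is a sober space, so Proposition E applies to it and yields
\[
	T_{\Omega(\pt(E))} \cong \mathcal{I}(\pt(E), \tau).
\]

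It then remains to observe that the classical Munn construction $T_{(-)}$ is functorial on isomorphisms of semilattices: a semilattice isomorphism $\phi\colon E' \to E''$ induces an inverse semigroup isomorphism $T_{E'} \to T_{E''}$ by conjugating each order-isomorphism between principal order-ideals of $E'$ by $\phi$. Applying this to the isomorphism $E \cong \Omega \pt(E)$ gives $T_{E} \cong T_{\Omega(\pt(E))}$, and combining with the display above yields
\[
	T_{E} \cong T_{\Omega(\pt(E))} \cong \mathcal{I}(\pt(E), \tau).
\]

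There is no substantial obstacle here; the whole content sits in the two preparatory facts already established, namely Proposition E (\Cref{sober}) and the sobriety of $\pt(E)$ under the enough-points hypothesis. The only thing worth spelling out carefully in the proof is the functoriality step, to make explicit that $T_{(-)}$ transports isomorphisms of semilattices to isomorphisms of inverse semigroups, so that the isomorphism $\epsilon_{E}$ of frames can be lifted to an isomorphism of the associated Munn semigroups.
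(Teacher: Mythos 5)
Your argument is correct and is essentially the paper's own: the paper states immediately before the corollary that a frame with enough points satisfies $E \cong \Omega\pt(E)$ with $\pt(E)$ sober, and then deduces the corollary from \Cref{sober} exactly as you do. Your explicit remark that $T_{(-)}$ carries semilattice isomorphisms to inverse semigroup isomorphisms just spells out a step the paper leaves implicit.
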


\begin{lem}
	Let $X$ and $Y$ be sober spaces such that $\mathcal{I}(X, \tau) \cong \mathcal{I}(Y, \tau)$. Then,
	$X$ is homeomorphic to $Y$.
\end{lem}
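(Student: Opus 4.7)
The plan is to reduce the statement to an isomorphism of frames and then to apply the sobriety hypothesis via the $\Omega \dashv \pt$ adjunction. By Proposition E, sobriety of $X$ and $Y$ gives isomorphisms of inverse semigroups
\[
T_{\Omega(X)} \cong \mathcal{I}(X,\tau) \cong \mathcal{I}(Y,\tau) \cong T_{\Omega(Y)}.
\]
Every inverse semigroup isomorphism restricts to a semilattice isomorphism between sets of idempotents, and by \Cref{lem: theta}(1) the idempotents of $T_{\Omega(X)}$ are naturally isomorphic to $\Omega(X)$ (and similarly for $Y$). Composing, we obtain a semilattice isomorphism $\Omega(X) \cong \Omega(Y)$.

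The next step is to upgrade this semilattice isomorphism to a frame isomorphism. This is automatic: any order-isomorphism between complete lattices preserves all existing joins, since a join is determined purely by the partial order as the least upper bound. Hence, since the frames $\Omega(X)$ and $\Omega(Y)$ are, in particular, meet-semilattices whose order agrees with the frame order, the map between them preserves both finite meets and arbitrary joins, i.e.\ it is a frame isomorphism.

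Finally, I apply the functor $\pt$ from \Cref{adjunction}. A frame isomorphism $\Omega(X) \cong \Omega(Y)$ yields a homeomorphism $\pt\Omega(X) \cong \pt\Omega(Y)$. Since $X$ is sober the unit $\eta_{X}\colon X \to \pt\Omega(X)$ is a homeomorphism, and likewise for $Y$. Chaining these,
\[
X \xrightarrow{\eta_{X}} \pt\Omega(X) \xrightarrow{\cong} \pt\Omega(Y) \xrightarrow{\eta_{Y}^{-1}} Y,
\]
gives the required homeomorphism.

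The only genuinely subtle point is checking that the semilattice isomorphism produced in the first paragraph does respect arbitrary suprema in order to qualify as a frame morphism; but as noted this is forced by the order-theoretic characterisation of joins. Everything else is a direct application of Proposition E, Lemma 5.3, and Theorem 6.1 together with the definition of sobriety.
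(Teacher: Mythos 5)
Your proof is correct and follows essentially the same route as the paper: extract a semilattice (hence order-) isomorphism $\Omega(X)\cong\Omega(Y)$ from the idempotents, observe that order-isomorphisms of frames are frame isomorphisms, and conclude via the adjunction and sobriety. The only difference is cosmetic: the paper reads off $E(\mathcal{I}(X,\tau))\cong\Omega(X)$ directly from the identity maps on open sets, whereas you detour through Proposition E and $T_{\Omega(X)}$, which is harmless but unnecessary.
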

\begin{proof}
	Taking idempotents of $\mathcal{I}(X, \tau) \cong \mathcal{I}(Y, \tau)$, there is an order-isomorphism
	$\Omega(X) \cong \Omega(Y)$. Since order-isomorphisms are frame isomorphisms, by the adjunction in
	\Cref{adjunction} and the fact that $X$ and $Y$ are sober, there exists a homeomorphism $X \cong Y$.
\end{proof}

We can therefore view the Munn semigroup $T_{E}$ as a generalisation of $\mathcal{I}(X, \tau)$.
We now show that our definition of the generalised Munn semigroup plays a similar role, giving a frame theoretic version of the work
of Zhitormirskiy. 
In \cite{zhito1} the notion of partial homeomorphisms on a space is generalised to analogous maps on bundles.
In this paper we use the term
 \emph{(étale) bundle} to refer to a surjective local homeomorphism $\pi\colon X \to B$. Bundles are commonly known as étale spaces,
 see \cite[Sec. II.6]{sheaves}.
Fix a bundle $\pi\colon X \to  B$.
For each $U \subseteq B$ open, observe there is a 
bundle $\pi|_{\pi^{-1}(U)}\colon \pi^{-1}(U) \twoheadrightarrow U$ being the
restriction  of $\pi $ to $\pi^{-1}(U)$, we call this the \emph{principal subbundle} of $\pi$ at $U$. 
A map $\sigma\colon U \to X$ such that $\pi \sigma = \id_{U}$ for some open subset $U \subseteq B$ is called
a \emph{(local) section} of $\pi$.
A \emph{global section} of $\pi$ is a section with domain $B$.
By \cite[Chap. 2.3]{tennison1975sheaf} for any section $\sigma\colon U \to X$: the image of $\sigma$
is open in $X$, and  
$\sigma\colon U \to \sigma(U)$ is a homeomorphism with inverse $\pi|_{\sigma(U)}$,
since $\pi\colon X \to B$ is a local homeomorphism.
Denote the set of all sections of $\pi$ by
\[
	\Gamma(\pi) = \{\sigma\colon U \to X : U \in \Omega(X) \mbox{ and } \pi \sigma = \id_{U}\}
.\]
The set of all sections of $\pi$ forms a sheaf over $B$, see \cite{sheaves}, in particular this is a presheaf over
the semilattice $\Omega(B)$
satisfying additional compatibility conditions. 
Explicitly, the action of
the presheaf $\Gamma(\pi) \times \Omega(B) \to \Gamma(\pi)$ is given by $\sigma \cdot V = \sigma|_{V \cap U}$ 
with $U, V \in \Omega(B)$ and $\sigma\colon U \to X$. The map $p\colon \Gamma(\pi) \to  \Omega(B)$ is given by 
$p(\sigma) = \dom \sigma$.
Note that the presheaf has global support if and only if there exists a global section of $\pi$.
The principal 
subpresheaves of $(\Gamma(\pi), \Omega(B), p)$ are of the form
\[
	\Gamma(\pi) \cdot U
	= \{\sigma \in \Gamma(\pi): \dom \sigma \subseteq U\}.
\]
Note that the principal subpresheaf $\Gamma(\pi) \cdot U$ corresponds to the sheaf of sections of
the principal subbundle of $\pi$ at $U$.
For all $U \in \Omega(B)$ denote
the principal subpresheaf $\Gamma(\pi) \cdot U$ by $\Gamma U$. 

A \emph{principal partial automorphism of a bundle} of $\pi\colon X \to B$ is a 
pair of homeomorphisms 
$(\theta\colon \pi^{-1}(U) \to \pi^{-1}(V), \hat{\theta}\colon U \to V)$
such that the following diagram commutes:
\begin{equation} \label{comm bundle}
\begin{tikzcd}
	\pi^{-1}(U) \ar[r, "\theta"] \ar[d, "\pi"'] & \pi^{-1}(V) \ar[d, "\pi"] \\
	U \ar[r, "\hat{\theta }"] & V
\end{tikzcd}
\end{equation}
The set of all principal partial automorphisms of a bundle $\pi\colon X \to B$ defines an inverse semigroup $\mathbf{La}(\pi)$ where
composition is defined in both coordinates as the usual composition for partial homeomorphisms.
This was was defined by Zhitomirskiy in \cite{zhito1}. The goal of this section is to connect $\mathbf{La}(\pi)$ with
the generalised Munn semigroup of the presheaf $(\Gamma(\pi), \Omega(B), p)$, generalising \Cref{sober}.
We will require the space $B$ to be sober so we can apply the techniques we used before.
Begin by observing that the idempotents $E(\mathbf{La}(\pi))$ can be identified with the set of open sets of $B$ since
\[
	E(\mathbf{La}(\pi)) = \{(\id_{\pi^{-1}(U)}, \id_{U}) : U \in \Omega(B)\} \cong \Omega(B).
\]

\begin{prop} \label[proposition]{prop: act}
	Let $\pi\colon X \to B $ be a bundle. Then:
\begin{enumerate}
	\item 
	There exists
	a supported action $(\Gamma(\pi), \mathbf{La}(\pi), p)$ where the action
	$\Gamma(\pi)  \times \mathbf{La}(\pi) \to \Gamma(\pi)$ is
	defined by
	\[
	\sigma \cdot (\theta, \hat{\theta }) = \theta^{-1} \sigma \hat{\theta }\colon \hat{\theta }^{-1}(V \cap W) \to X,
	\]
	for $\hat{\theta }\colon U \to V$ and $\sigma\colon W \to X$. The map
	$p\colon \Gamma(\pi) \to  E(\mathbf{La}(\pi))$ is defined by
	$\sigma \mapsto (\id_{\pi^{-1}(W)}, \id_{W})$. Often we will just denote the action by 
	$\sigma \cdot \theta$.
	\item 
	The characteristic congruence of the supported action $(\Gamma(\pi), \mathbf{La}(\pi), p)$ is idempotent-separating.
	\item If $B$ is a sober space
	then $(\Gamma(\pi), \mathbf{La}(\pi), p)$ has characteristic congruence being equality.
\end{enumerate}
\end{prop}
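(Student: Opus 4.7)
The plan is to verify the three parts in order. For part (1), I first check that $\sigma \cdot (\theta, \hat{\theta}) = \theta^{-1} \sigma \hat{\theta}$ actually lands in $\Gamma(\pi)$: using the commutativity of diagram (\ref{comm bundle}), $\pi \theta^{-1} = \hat{\theta}^{-1} \pi$, so $\pi(\theta^{-1} \sigma \hat{\theta}) = \hat{\theta}^{-1} \pi \sigma \hat{\theta} = \hat{\theta}^{-1} \hat{\theta}$, which is the identity on $\hat{\theta}^{-1}(V \cap W)$. Axioms (SA1)--(SA3) then follow by direct computation: (SA1) from associativity of function composition with careful domain tracking; (SA2) because $p(\sigma) = (\id_{\pi^{-1}(W)}, \id_W)$ yields $\sigma \cdot p(\sigma) = \sigma$ immediately; (SA3) by computing the conjugate of $(\id_{\pi^{-1}(W)}, \id_W)$ in $\mathbf{La}(\pi)$ and recognising its support as $\hat{\theta}^{-1}(V \cap W)$.

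For part (2), I would assume $((\id_{\pi^{-1}(U)}, \id_U), (\id_{\pi^{-1}(V)}, \id_V))$ lies in the characteristic congruence, so that $\sigma|_{U \cap \dom \sigma} = \sigma|_{V \cap \dom \sigma}$ for every section $\sigma$; in particular $U \cap \dom \sigma = V \cap \dom \sigma$. Since $\pi$ is a local homeomorphism, through every $b \in U$ there is a section $\sigma$ with $b \in \dom \sigma$; replacing $\sigma$ by its restriction to $U \cap \dom \sigma$ I may assume $\dom \sigma \subseteq U$, and then $\dom \sigma = U \cap \dom \sigma = V \cap \dom \sigma$ forces $b \in V$. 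By symmetry $U = V$.

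For part (3), I assume $B$ is sober and let $((\theta_1, \hat{\theta}_1), (\theta_2, \hat{\theta}_2))$ lie in the characteristic congruence. By part (2) this congruence is idempotent-separating, so the two pairs share a common source $U$ and target $V$. I would split into two steps. First I show $\hat{\theta}_1 = \hat{\theta}_2$: equating the domains of the two acted sections gives $\hat{\theta}_1^{-1}(V \cap \dom \sigma) = \hat{\theta}_2^{-1}(V \cap \dom \sigma)$ for every $\sigma$. Because $\pi$ is étale, the domains of sections form a basis for the topology of $B$, so every open in $V$ is a union of such sets; since each $\hat{\theta}_i^{-1}$ preserves arbitrary unions, the two frame maps $\Omega(V) \to \Omega(U)$ agree, and sobriety of $B$ (via \Cref{lem: im gives equal} applied to the sober subspace $V$) yields $\hat{\theta}_1 = \hat{\theta}_2 =: \hat{\theta}$. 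Second, with the base maps agreed, the equation $\theta_1^{-1} \sigma \hat{\theta} = \theta_2^{-1} \sigma \hat{\theta}$ gives $\theta_1^{-1}(y) = \theta_2^{-1}(y)$ for every $y$ of the form $\sigma(\hat{\theta}(x))$; a second use of the étale property shows that such points exhaust $\pi^{-1}(V)$, whence $\theta_1 = \theta_2$.

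The main obstacle will be part (3): it requires invoking the étale property twice, first to realise every open of $V$ as a union of section-domains in order to pin down the base map, and then to realise every point of $\pi^{-1}(V)$ as $\sigma(\hat{\theta}(x))$ in order to pin down the upstairs map. Sobriety enters only at the step that converts equality of frame maps into equality of continuous functions, so the overall argument is an interplay between the local-homeomorphism structure of $\pi$ and the sober-space structure of $B$.
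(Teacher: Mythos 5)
Your parts (1) and (2) are correct and follow essentially the same route as the paper: well-definedness of $\theta^{-1}\sigma\hat{\theta}$ via the commuting square, and, for (2), a section through each point of $U$ whose domain intersected with $U$ and $V$ witnesses $U = V$. The genuine divergence is in part (3), where your argument is correct but organised differently. To pin down the base maps the paper observes that the characteristic congruence, being idempotent-separating, is contained in $\mu$, and conjugates the idempotents $(\id_{\pi^{-1}(W)}, \id_{W})$ to obtain $\hat{\theta}_1(W) = \hat{\theta}_2(W)$ for all $W \in \Omega(U)$ before invoking \Cref{lem: im gives equal}; you instead read off $\hat{\theta}_1^{-1}(V \cap \dom \sigma) = \hat{\theta}_2^{-1}(V \cap \dom \sigma)$ from the domains of the acted sections and use that section domains form a basis of $B$ (valid, since $\pi$ is a surjective local homeomorphism) together with preservation of unions to get agreement of the frame maps on all of $\Omega(V)$ — this stays entirely inside the given action and avoids any appeal to $\mu$. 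For the upstairs maps the paper first establishes the set equality $\theta(\sigma(W)) = \phi(\sigma(W))$ and then applies \Cref{lem: im gives equal} a second time on the sober subspace $\sigma(W)$; you exploit that equality of the acted sections is already pointwise equality of functions, so $\theta_1^{-1}$ and $\theta_2^{-1}$ agree at every point $\sigma(b)$ with $b \in V \cap \dom\sigma$, and a second use of the local-homeomorphism property shows such points exhaust $\pi^{-1}(V)$. Your route is more direct and uses sobriety only once (for the base maps), whereas the paper's uses it twice; what the paper's version buys is a reusable template (conjugation of idempotents plus \Cref{lem: im gives equal}) already set up in \Cref{sobfund}.
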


\begin{proof}
	\quad

	\begin{enumerate}
	\item 
First, note that $\hat{\theta}^{-1} \sigma^{-1}\pi^{-1}(V) = \hat{\theta }^{-1}(V \cap W)$, so the function $ \theta^{-1} \sigma \hat{\theta }$ is well-defined on its
domain.
 We show that $\theta^{-1} \sigma \hat{\theta}^{-1}$ is 
a local section.
Let $x \in \dom \theta^{-1} \sigma \hat{\theta}$. Then, by the commutative diagram (\ref{comm bundle}),
 \[
	\pi \theta^{-1} \sigma \hat{\theta}(x) = \hat{\theta}^{-1} \pi \sigma \hat{\theta }(x) = \hat{\theta }^{-1} \hat{\theta}(x) = x
.\]
To show (SA1) let $(\phi, \hat{\phi }) \in \mathbf{La}(\pi)$ such that $\hat{\phi }\colon U' \to V'$. First it must be shown that the domains
of $( \sigma \cdot \phi) \cdot \theta $ and $ \sigma \cdot \phi \theta$ are equal:
\begin{equation*}
\dom (\sigma \cdot \phi \cdot \theta) = 
\hat{\theta}^{-1}( \hat{\phi }^{-1}(V' \cap W) \cap V) = (\widehat{ \phi \theta})^{-1}(W \cap \hat{\phi }(V \cap U')) 
	 = \dom (\sigma \cdot \phi \theta)
.\end{equation*}
Hence, 
\[
	(\sigma \cdot \phi) \cdot \theta = \theta^{-1}(\phi^{-1} \sigma \hat{\phi }) \hat{\theta } = (\phi \theta)^{-1} \sigma \widehat{ \phi \theta} = \sigma \cdot \phi \theta
.\]
Note that $\pi \sigma(U) = U$ so $\sigma(U) \subseteq \pi^{-1}(U)$. (SA2) follows from
\[
	\sigma \cdot p(U) = \id_{\pi^{-1}(U)} \sigma \id_{U} = \sigma
.\]
Since,
 $\hat{\theta}^{-1} \id_{W } \hat{\theta} $ is the identity of the set $\hat{\theta}^{-1}( \pi(W) \cap V)$
then
\[
p(	\sigma \cdot \theta) = (\id_{\pi^{-1}(\hat{\theta}^{-1}(W \cap V))}, \id_{\hat{\theta}^{-1}(W \cap V)})
= (\theta, \hat{\theta})^{-1} p(\sigma) (\theta, \hat{\theta})
\] and
(SA3) holds.
\item 
	We identify the idempotents of $\mathbf{La}(\pi)$ with the open sets $\Omega(B)$. Let $U,V \in \Omega(B)$ and
	suppose that they are equivalent under the characteristic congruence.  Let $x \in U$ then, since $\pi $ is
	a surjective local homeomorphism, there exists
	a section $\sigma_{x}\colon U_{x} \to X$ defined on an open neighbourhood $U_{x}$ of $x$. By definition of the 
	characteristic congruence
	\[
		 \sigma_{x}|_{U \cap U_{x}} = \sigma_{x} \cdot U  = \sigma_{x} \cdot V = \sigma_{x}|_{V \cap U_{x}}
	.\]
	Since $x \in U \cap U_{x} = V \cap U_{x}$ then $x \in V$. Repeating for all $x \in U$ it follows that $U \subseteq V$.
	A symmetric argument proves $V \subseteq U$ and therefore
	$U = V$.

\item 
	Let $(\theta, \hat{\theta})$ and $(\phi, \hat{\phi})$ be equivalent in $\mathbf{La}(\pi)$ under the characteristic congruence.
	Since the characteristic congruence is idempotent-separating
	they have the same domain and range. Suppose that $\theta, \phi\colon \pi^{-1}(U) \to \pi^{-1}(V)$.
	We now show that $\hat{\theta } = \hat{\phi }$ using the proof of \Cref{sobfund}. Again since the characteristic congruence is
	idempotent-separating then $((\theta, \hat{\theta}), (\phi, \hat{\phi})) \in \mu$. Hence, for all $W \in \Omega(U)$,
	\[
		(\theta, \hat{\theta})(\id_{\pi^{-1}(W)}, \id_{W})(\theta, \hat{\theta})^{-1} 
		 =  (\phi, \hat{\phi})(\id_{\pi^{-1}(W)}, \id_{W})(\phi, \hat{\phi})^{-1}
	\]
	and so
	\[
		(\id_{\theta(\pi^{-1}(W))}, \id_{\hat{\theta }(W)}) 
		= (\id_{\phi(\pi^{-1}(W))}, \id_{\hat{\phi }(W)})
	.\]
	Therefore, as $B$ is sober, $U$ and $V$ are sober so applying  \Cref{lem: im gives equal} proves $\hat{\theta } = \hat{\phi}$.
	Now we show that $\theta = \phi$.
	Fix an element $x \in \pi^{-1}(U)$. Since $\pi$ is a local homeomorphism there exists a section $\sigma\colon W \to X$ such that
$x \in \sigma(W) \subseteq \pi^{-1}(U)$.
	By assumption
	\[
		\theta \sigma \hat{\theta}^{-1} = \sigma \cdot (\theta, \hat{\theta})^{-1} = \sigma \cdot (\phi, \hat{\phi})^{-1} = \phi \sigma \hat{\phi}^{-1}.
	\]
	Since $\sigma(W) \subseteq \pi^{-1}(U)$ then $W \subseteq U$, using this and that $\hat{\theta}=\hat{\phi}$ the domain of the section $\theta \sigma \hat{\theta}^{-1}$ is
	 $\hat{\theta}(U \cap W) = \hat{\theta}(W) = \hat{\phi }(W)$.
	 Hence,  $\theta(\sigma(W)) = \phi(\sigma(W))$.
	By assumption $\sigma $ is a local homeomorphism on to its image, since $B$ is sober then $W $ is sober and homeomorphic to $\sigma(W)$. Therefore,
	$\theta|_{\sigma(W)}$ and $\phi|_{\sigma(W)}$ are both homeomorphisms
	between sober spaces.
	For any open set $A$ in $\Omega(\sigma(W)) $, $\sigma|_{\pi(A)}$ is a local section of $\pi$ and $ \theta(A) = \theta(\sigma(\pi(A))) = \phi(\sigma(\pi(B))) = \phi(A)$ as above.
	By \Cref{lem: im gives equal}
	$\theta|_{W} = \phi|_{W}$ and repeating this for all $x \in \pi^{-1}(U)$ we deduce that $\theta = \phi$.

	\end{enumerate}
\end{proof}

\begin{thm}
	Let $\pi\colon X \to B$ be an étale bundle such that $B$ is a sober space. Then,
	\[
		T_{\Gamma(\pi)} \cong \mathbf{La}(\pi),
	\]
	where $T_{\Gamma(\pi)}$ is the generalised Munn semigroup associated to the sheaf of sections $\Gamma(\pi)$ of $\pi$,
	and $\mathbf{La}(\pi)$ is the inverse semigroup 
	of principal partial automorphisms of $\pi$.
\end{thm}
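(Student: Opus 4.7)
The plan is to show that the idempotent-separating homomorphism produced by the generalised Munn representation applied to the supported action of \Cref{prop: act} is in fact an isomorphism. By \Cref{prop: act}(1) we have a supported action $(\Gamma(\pi), \mathbf{La}(\pi), p)$, and \Cref{prop: act}(3), using sobriety of $B$, shows its characteristic congruence is equality. \Cref{Thm: General Munn} then yields an idempotent-separating homomorphism $\xi\colon \mathbf{La}(\pi) \to T_{\Gamma(\pi)}$ whose image is a wide inverse subsemigroup, and \Cref{IdempRho}(3) forces $\xi$ to be injective. The remaining task is surjectivity.

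Fix $(\alpha, \theta_{\alpha}) \in T_{\Gamma(\pi)}$ with $\alpha\colon \Gamma U \to \Gamma V$ and order-isomorphism $\theta_{\alpha}\colon \Omega(U) \to \Omega(V)$. Since $U$ and $V$ inherit sobriety from $B$, \Cref{lem: image} (applied with source and target swapped) produces a homeomorphism $\hat{\theta}_{\alpha}\colon U \to V$ satisfying $\hat{\theta}_{\alpha}(W) = \theta_{\alpha}(W)$ for every open $W \subseteq U$. I then build a homeomorphism $\theta\colon \pi^{-1}(U) \to \pi^{-1}(V)$ by stitching together local sections. For $x \in \pi^{-1}(U)$, use that $\pi$ is a local homeomorphism to choose an open $W \subseteq U$ and a section $\sigma\colon W \to X$ with $\sigma(\pi(x)) = x$, and set
\[
\theta(x) := \alpha(\sigma)\bigl(\hat{\theta}_{\alpha}(\pi(x))\bigr),
\]
which lies in $\pi^{-1}(V)$ because $\alpha(\sigma)$ is a section defined on $\theta_{\alpha}(W) = \hat{\theta}_{\alpha}(W)$.

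Well-definedness follows from the standard fact that two sections of an étale bundle agreeing at a point agree on an open neighbourhood of that point: if $\sigma'$ is another section through $x$, then $\sigma$ and $\sigma'$ coincide on some open $W'' \ni \pi(x)$, and naturality of $\alpha$ as a presheaf morphism gives $\alpha(\sigma)|_{\theta_{\alpha}(W'')} = \alpha(\sigma|_{W''}) = \alpha(\sigma'|_{W''}) = \alpha(\sigma')|_{\theta_{\alpha}(W'')}$, so the two candidate values for $\theta(x)$ coincide. Continuity is immediate: on the open neighbourhood $\sigma(W)$ of $x$, $\theta$ factors as $\alpha(\sigma) \circ \hat{\theta}_{\alpha} \circ (\pi|_{\sigma(W)})$, a composition of continuous maps. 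Commutativity $\pi\theta = \hat{\theta}_{\alpha}\pi$ is automatic because $\alpha(\sigma)$ is a section, and the symmetric construction applied to $\alpha^{-1}$ provides the inverse of $\theta$, so $(\theta, \hat{\theta}_{\alpha}) \in \mathbf{La}(\pi)$.

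Finally, unwinding the definition of $\xi$ for $s = (\theta, \hat{\theta}_{\alpha})$ gives $\alpha_{s^{-1}}(\sigma) = \sigma \cdot s^{-1} = \theta\sigma\hat{\theta}_{\alpha}^{-1}$, and the defining formula for $\theta$ shows $\theta\sigma\hat{\theta}_{\alpha}^{-1}(y) = \alpha(\sigma)(y)$ for all $y$ in the relevant domain; combined with $\theta_{s^{-1}}(W) = \hat{\theta}_{\alpha}(W) = \theta_{\alpha}(W)$ this yields $\xi(s) = (\alpha, \theta_{\alpha})$. The main obstacle is the construction of $\theta$, which relies essentially on $\pi$ being étale (so that local sections through every point exist) and on sobriety of $B$ (so that $\hat{\theta}_{\alpha}$ can be extracted from $\theta_{\alpha}$); once $(\theta, \hat{\theta}_{\alpha})$ is in hand the identification of $\xi(s)$ with $(\alpha, \theta_{\alpha})$ is mechanical.
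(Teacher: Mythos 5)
Your proposal is correct and follows essentially the same route as the paper: injectivity via the triviality of the characteristic congruence of $(\Gamma(\pi),\mathbf{La}(\pi),p)$ together with the generalised Munn representation, and surjectivity by stitching local sections through $\alpha$ and the point-level homeomorphism extracted from $\theta_{\alpha}$ via sobriety. The only (cosmetic) differences are that you obtain bijectivity of the bundle map by running the symmetric construction on $\alpha^{-1}$ and continuity by local factorisation through sections, where the paper checks injectivity, surjectivity and openness on the basis of section-images separately.
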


\begin{proof}
	By \Cref{prop: act} the characteristic congruence of $(\Gamma(\pi), \mathbf{La}(\pi), p)$ is equality hence, by
	\Cref{IdempRho}, the generalised Munn representation of \Cref{Thm: General Munn} is an embedding
	 $\xi\colon\mathbf{La}(\pi) \to T_{\Gamma(\pi)}$. We show that this
	is also surjective.
Using the identification $E(\mathbf{La}(\pi)) \cong \Omega(B)$ let
\[
	(\alpha\colon \Gamma V \to \Gamma U, \theta_{\alpha }\colon \Omega(V) \to \Omega(U))
	\in T_{\Gamma(\pi)}.
\]
Since $B $ is sober then by \Cref{lem: image} there is a homeomorphism $\theta_{\alpha}^{\ast}\colon U \to V$ such that
$\theta_{\alpha}^{\ast}(W) = \theta_{\alpha}^{-1}(W)$ for all $W \in \Omega(U)$. We wish to construct a homeomorphism
$\alpha^{\ast}\colon \pi^{-1}(U) \to \pi^{-1}(V)$ such that  $(\alpha^{\ast}, \theta_{\alpha}^{\ast}) \in \mathbf{La}(\pi)$.
Let $x \in \pi^{-1}(U)$. Since $\pi$ is a local homeomorphism there exists a local section $\sigma\colon W \to X$ such that
$x \in \sigma(W) \subseteq \pi^{-1}(U)$. The map $\alpha^{-1}(\sigma)\colon p(\alpha^{-1}(\sigma)) \to X$ is a section of $\pi$.
Observe that
\[
	\theta_{\alpha}^{\ast}(\pi(x)) \in \theta_{\alpha}^{\ast}(W) =  \theta_{\alpha}^{-1}(p(\sigma)) = p(\alpha^{-1}(\sigma)).
\]
Define $\alpha^{\ast}(x)$  to be the image of $\theta_{\alpha}^{\ast}(\pi(x))$ under the section $\alpha^{-1}(\sigma)$, we show this does not depend on the section $\sigma $ chosen.
Let $\tau\colon W' \to X$ be another section such that 
$x \in \tau(W') \subseteq \pi^{-1}(U)$.
Then, $\tau $ and $\sigma $ agree at the point $\pi(x)$, hence they agree on the set $Z \coloneqq \pi(\tau(W') \cap \sigma(W))$.
Since $\alpha $ is an order-isomorphism then
\[
	\alpha^{-1}(\tau) \cdot \theta_{\alpha }^{-1}(Z) =   \alpha^{-1}(\tau \cdot Z) =  \alpha^{-1}( \sigma \cdot Z) =  \alpha^{-1}( \sigma) \cdot \theta^{-1}_{\alpha }(Z)
\]
and $\pi(x) \in Z$ implies that the image of $\theta_{\alpha }^{\ast}(\pi(x)) $ under $\alpha^{-1}(\tau)$ and $\alpha^{-1}(\sigma)$ agree.
We show that the function $\alpha^{\ast}$ is injective. Let $x,y \in \pi^{-1}(U)$ such that
$\alpha^{\ast}(x) = \alpha^{\ast}(y)$. Then, there exist sections  $\sigma\colon W_{x} \to X$ and $\tau\colon W_{y } \to X$ such that
$\sigma(\pi(x)) = x$ and $\tau(\pi(y)) = y$. Since the sections
$\alpha^{-1}(\tau)$ and $\alpha^{-1}(\sigma)$ agree at $\theta_{\alpha}^{\ast}(\pi(x)) = \theta_{\alpha}^{\ast}(\pi(y))$ then, as before, they agree on an open set
$Z$ containing  $\theta_{\alpha}^{\ast}(\pi(x))$. Therefore, $\sigma $ and $\tau $ agree on an open set containing 
$\pi(x) = \pi(y)$ and $x = \sigma(\pi(x)) = \tau(\pi(y)) = y$.
We show the function $\alpha^{\ast}$ is surjective. Let $x \in \pi^{-1}(V)$. Then, there exists a section $\sigma\colon
W \to X$ of $\pi$ such that
$\sigma(\pi(x)) = x$. Let $y$ be the image of $(\theta_{\alpha}^{\ast})^{-1}(\pi(x)) $ under the section $\alpha(\sigma)$. It follows that
$\alpha^{\ast}(y) = x$.
Therefore, $\alpha^{\ast}$ is a bijective function and by definition $\pi \alpha^{\ast} = \pi \theta_{\alpha}$. It is only left
to show that 
$\alpha^{\ast}$ is open and continuous. Here we use the fact that $X$ has a basis of open sets
$ \{\ima \sigma : \sigma \in \Gamma(\pi)\}$ and it is enough to check continuity on this basis.
Let $\sigma\colon W \to X$ be a local section such that $\sigma(W) \subseteq p^{-1}(U)$. Then, by definition,
\[
	\alpha^{\ast}(\ima \sigma) \subseteq \ima \alpha^{-1}(\sigma)
.\]
For the other containment let $x \in \ima \alpha^{-1}(\sigma)$, then $\sigma((\theta_{\alpha}^{\ast})^{-1}(\pi(x))) \in \ima \sigma$
and hence  $x \in \alpha^{\ast}(\ima \sigma)$.
 Since $\alpha$ and $\alpha^{\ast}$ are bijections we also find that
$(\alpha^{\ast})^{-1}(\ima \sigma) = \ima \alpha(\sigma)$ for all $\sigma \in \Gamma(\pi)$ such that 
$\ima \sigma \subseteq \pi^{-1}(V)$.
Hence,  $\alpha^{\ast}$ is a homeomorphism.

We have constructed $(\alpha^{\ast}, \theta_{\alpha}^{\ast}) \in \mathbf{La}(\pi)$. The homomorphism 
$\xi $ maps the element $(\alpha^{\ast}, \theta_{\alpha }^{\ast})^{-1}$ to $(\beta, \theta_{\alpha})$ where
the map  $\beta\colon \Gamma V \to \Gamma U$ is defined by
$\sigma \mapsto (\alpha^{\ast})^{-1} \sigma \theta^{\ast}_{\alpha}$.
The domain of $(\alpha^{\ast})^{-1} \sigma \theta^{\ast}_{\alpha}$ is
\[
	(\theta_{\alpha}^{\ast})^{-1}( \dom \sigma) = \theta_{\alpha}(p(\sigma)) = p \alpha(\sigma) = \dom \alpha( \sigma).
\]
Let $x \in \dom \alpha(\sigma)$. Then,
\[
	\alpha^{\ast}(\alpha(\sigma)(x)) = \alpha^{-1}\alpha(\sigma)(\theta_{\alpha}^{\ast}(x)) = \sigma(\theta_{\alpha}^{\ast}(x))
.\]
 Therefore,
$((\alpha^{\ast})^{-1}\sigma\theta^{\ast}_{\alpha})(x) = \alpha(\sigma)(x)$ and
$\xi((\alpha^{\ast}, \theta_{\alpha }^{\ast})^{-1}) = (\beta, \theta_{\alpha}) = (\alpha, \theta_{\alpha })$.
\end{proof}

\section{Bundles from presheaves}

In the previous section we showed how every bundle gives rise to a sheaf. We now look at how presheaves give rise 
to bundles.
Let $S$ be an inverse semigroup
and $(X,S,p)$ a globally supported action.
It was first noted by Steinberg \cite{steinberg2009strong} that for such an action we can define respective topologies
on $X$ and $E(S)$ by
letting their open sets be order-ideals and then
the map $p\colon X \to E(S)$ is a local homeomorphism. Moreover, following Wagner \cite[Chapter 5.2]{LawsonBook} both
$E(S)$ and $X$ are $T_0$ under this topology. Since the action has global support then $p\colon X \to E(S)$ is a bundle.
Now as seen in \Cref{prop: act} from every bundle one may construct a supported action with global support. We would
like to know what are the sections of $p\colon X \to E(S)$. Following \cite{lawson2021morita}
define two elements
$x,y \in X$ to be \emph{compatible} if $x \cdot p(y) = y \cdot p(x)$. A subset of $X$ is called \emph{compatible}
if it pairwise compatible.

\begin{lem}
	Let $S$ be an inverse semigroup and 
$(X,S,p)$ a globally supported action. Then, $p|_{U}\colon U \to p(U)$ is a homeomorphism, 
 with respect to the topology above, if and only
if $U$ is a compatible order-ideal.
\end{lem}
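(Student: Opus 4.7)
The plan is to reduce the topological claim to a purely algebraic bijectivity criterion and then handle the two directions via direct computation with the compatibility relation. Note first that since $U$ is an order-ideal (hence open) and $p\colon X\to E(S)$ is a local homeomorphism, $p$ is both continuous and open; restricting to the open subspace $U$ and codomain $p(U)$ preserves these properties. Therefore $p|_U\colon U\to p(U)$ is always continuous and open, and the problem reduces to showing that \emph{injectivity} of $p|_U$ is equivalent to $U$ being compatible.

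For the forward direction, assume $p|_U$ is a homeomorphism, in particular injective. Take $x,y\in U$. Since $U$ is an order-ideal, both $x\cdot p(y)\le x$ and $y\cdot p(x)\le y$ lie in $U$. Applying $p$ and using (SA3) together with the commutativity of idempotents gives
\[
p(x\cdot p(y))=p(y)p(x)p(y)=p(x)p(y)=p(y)p(x)p(x)=p(y\cdot p(x)).
\]
Injectivity of $p|_U$ forces $x\cdot p(y)=y\cdot p(x)$, so any two elements of $U$ are compatible.

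For the converse, suppose $U$ is a compatible order-ideal. To prove injectivity, take $x,y\in U$ with $p(x)=p(y)$. Compatibility yields $x\cdot p(y)=y\cdot p(x)$, and since $p(x)=p(y)$ we may rewrite both sides using (SA2):
\[
x=x\cdot p(x)=x\cdot p(y)=y\cdot p(x)=y\cdot p(y)=y.
\]
Thus $p|_U$ is a continuous, open bijection onto $p(U)$, i.e.\ a homeomorphism.

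There is no serious obstacle here; the only point to be careful about is that one is working with the subspace topology on $p(U)$ and that local homeomorphisms are automatically open, so continuity and openness come for free and the entire content of the lemma is the equivalence of injectivity with compatibility, which is a one-line application of (SA2) and (SA3) in each direction.
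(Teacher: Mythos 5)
Your proof is correct and follows essentially the same route as the paper: both directions reduce to the equivalence of injectivity of $p|_U$ with pairwise compatibility, using (SA2) for one implication and (SA3) plus the order-ideal property for the other. The only difference is that you spell out explicitly why continuity and openness are automatic from $p$ being a local homeomorphism, which the paper leaves implicit.
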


\begin{proof}
	 Let $U$ be a compatible order-ideal of $X$ and
	$x,y \in U$ such that $p(x) = p(y)$. Then, since $x$ and $y$ are compatible,
	 \[
		x = x \cdot p(x) = x \cdot p(y) = y \cdot p(x) = y \cdot p(y) = y
	.\] Hence, since $p$ is a local homeomorphism $p|_{U}\colon U \to p(U)$ is a homeomorphism. 
	Now suppose $V$ is an order-ideal of $X$, equivalently an open set, such that $p|_{V}$ is injective. 
	Let $x,y \in V$. Since $V$ is an order-ideal
	$x \cdot p(y)$ and $y \cdot p(x)$ are in $V$. By assumption 
	\[
		p(x \cdot p(y)) = p(x)p(y) = p(y \cdot p(x))
	\]
	implies that $x \cdot p(y) = y \cdot p(x)$. So $V$ is a 
	compatible order-ideal of $X$.
\end{proof}

Since $p$ is a surjective local homeomorphism the sections of $p$ are identified with their images, which is exactly
the open sets where $p$ is injective. Hence, the set of sections of $p$ are in bijection with the set of compatible order-ideals.
We remark that taking all the compatible order-ideals of a supported action
is the pseudogroup module completion of
\cite{lawson2021morita}.


\section*{Acknowledgments} This work was supported by the UKRI Centre for Doctoral Training in Algebra, Geometry and Quantum Fields (AGQ), Grant Number EP/Y035232/1.
I would like to thank my supervisor, Mark Lawson, for his significant contributions to this work and his many helpful suggestions during its preparation.


\begin{thebibliography}{10}


\bibitem{Clifford1941}
A.~H.~Clifford.
\newblock Semigroups admitting relative inverses.
\newblock {\em Ann. Math. (4)}, 42:1037--1049, 1941.

\bibitem{funk2010universal}
J.~Funk and B.~Steinberg.
\newblock The universal covering of an inverse semigroup.
\newblock {\em Appl. Categ. Structures}, 18:135--163, 2010.

\bibitem{Goldblatt}
R.~Goldblatt.
\newblock {\em Topoi: The Categorial Analysis of Logic}.
\newblock Elsevier Science Publishers B.V., Amsterdam, 1984.


\bibitem{LawKud2014}
G.~Kudryavtseva and M.~V.~Lawson.
\newblock The structure of generalized inverse semigroups.
\newblock {\em Semigroup Forum}, 89:199--216, 2014.


\bibitem{Lausch1975}
H.~Lausch.
\newblock Cohomology of inverse semigroups.
\newblock {\em J. Algebra}, 35:273--303, 1973.

\bibitem{LawsonBook}
M.~V.~Lawson.
\newblock {\em Inverse Semigroups: The Theory of Partial Symmetries}.
\newblock World Scientific, Singapore, 1998.


\bibitem{lawson2021morita}
M.~V.~Lawson and P.~Resende.
\newblock Morita equivalence of pseudogroups.
\newblock {\em J. Algebra}, 586:718--755, 2021. Elsevier.

\bibitem{LawStein2004}
M.~V.~Lawson and B.~Steinberg.
\newblock Ordered groupoids and étendues.
\newblock {\em Cah. Topol. Géom. Différ. Catég.}, 45(2):82--108, 2004.

\bibitem{LeechConstruction}
J.~Leech.
\newblock Constructing inverse monoids from small categories.
\newblock {\em Semigroup Forum}, 36:89--116, 1987.

\bibitem{Loganathan}
M.~Loganathan.
\newblock Cohomology of inverse semigroups.
\newblock {\em J. Algebra}, 70:375--393, 1981.

\bibitem{sheaves}
S.~Mac~Lane and I.~Moerdijk.
\newblock {\em Sheaves in Geometry and Logic: A First Introduction to Topos Theory}.
\newblock Springer, New York, 1992.

\bibitem{MunnSemigroup}
W.~D.~Munn.
\newblock Uniform semilattices and bisimple inverse semigroups.
\newblock {\em Q. J. Math. (2)}, 17:151--159, 1966.

\bibitem{reilly1977enlarging}
N.~R.~Reilly.
\newblock Enlarging the Munn representation of inverse semigroups.
\newblock {\em J. Aust. Math. Soc.}, 23(1):28--41, 1977.

\bibitem{RenaultBook}
J.~Renault.
\newblock {\em A Groupoid Approach to C*-Algebras}.
\newblock Lecture Notes in Mathematics, vol. 793. Springer, Berlin, 1980.


\bibitem{steinberg2009strong}
B.~Steinberg.
\newblock Strong Morita equivalence of inverse semigroups.
\newblock {\em Houston J. Math.}, 37(3):895--927, 2011.

\bibitem{steinberg2023twists}
B.~Steinberg.
\newblock Twists, crossed products and inverse semigroup cohomology.
\newblock {\em J. Aust. Math. Soc.}, 114(2):253--288, 2023.

\bibitem{tennison1975sheaf}
B.~R.~Tennison.
\newblock {\em Sheaf Theory}.
\newblock Cambridge University Press, New York, 1975.

\bibitem{zhito1}
G.~I.~Zhitomirskiy.
\newblock Inverse semigroups and fiberings. In
\newblock {\em Semigroups: Algebraic Theory Appl. Formal Lang. Codes}, pages 311--321. World Scientific, 1993.
\end{thebibliography}
\end{document}